\newcommand{\Chebyshevbasislow}{12}
\newcommand{\Chebyshevbasismed}{\the\numexpr(\Chebyshevbasislow*4)\relax}
\newcommand{\Chebyshevbasishigh}{\the\numexpr(\Chebyshevbasislow*8)\relax}
\newtheorem{thm}{Theorem}[section]
\newtheorem{lem}{Lemma}[section]
\newtheorem{prop}{Proposition}[section]
\newtheorem{cor}{Corollary}[section]
\newtheorem{asm}{Assumption}[section]
\newtheorem{rem}{Remark}[section]
\newproof{proof}{Proof}
\newcommand{\bunderline}[1]{\underline{#1\mkern-2mu}\mkern2mu}
\newcommand{\mylabel}[2]{#2\def\@currentlabel{#2}\label{#1}}
\newcommand{\E}{\mathbb{E}}
\begin{document}
\begin{frontmatter}

\title{Multidimensional Projection Filters via Automatic Differentiation and Sparse-Grid Integration}

\author[firstaddress,secondaddress]{Muhammad Fuady Emzir\corref{mycorrespondingauthor}}
\cortext[mycorrespondingauthor]{Corresponding author}
\ead{muhammad.emzir@kfupm.edu.sa}

\author[thirdaddress]{Zheng Zhao}
\author[fourthaddress]{Simo S\"arkk\"a}

\address[firstaddress]{Control and Instrumentation Engineering Department, King Fahd University of Petroleum and Minerals, Dhahran, Saudi Arabia}
\address[secondaddress]{Interdisciplinary Center of Smart Mobility and Logistics, King Fahd University of Petroleum and Minerals, Dhahran, Saudi Arabia}
\address[thirdaddress]{Uppsala University, Uppsala, Sweden}
\address[fourthaddress]{Aalto University, Espoo, Finland}

\begin{abstract}
    The projection filter is a technique for approximating the solutions of optimal filtering problems. In projection filters, the Kushner--Stratonovich stochastic partial differential equation that governs the propagation of the optimal filtering density is projected to a manifold of parametric densities, resulting in a finite-dimensional stochastic differential equation. Despite the fact that projection filters are capable of representing complicated probability densities, their current implementations are limited to Gaussian family or unidimensional filtering applications. This work considers a combination of numerical integration and automatic differentiation to construct projection filter algorithms for more generic problems. Specifically, we provide a detailed exposition of this combination for the manifold of the exponential family, and show how to apply the projection filter to multidimensional cases. We demonstrate numerically that based on comparison to a finite-difference solution to the Kushner--Stratonovich equation and a bootstrap particle filter with systematic resampling, the proposed algorithm retains an accurate approximation of the filtering density while requiring a comparatively low number of quadrature points. Due to the sparse-grid integration and automatic differentiation used to calculate the expected values of the natural statistics and the Fisher metric, the proposed filtering algorithms are highly scalable. They therefore are suitable to many applications in which the number of dimensions exceeds the practical limit of particle filters, but where the Gaussian-approximations are deemed unsatisfactory.
\end{abstract}

\begin{keyword}
Projection filter \sep nonlinear filter \sep automatic differentiation \sep sparse-grid integration
\end{keyword}
\begin{highlights}%
        \item We use automatic differentiation and sparse-grid integration to automate the construction of the projection filter.
        \item We present methods for constructing projection filters for multidimensional filtering problems using a non-Gaussian parametric density. 
        \item We show that the practical performance of the filter is comparable to the particle filter and finite difference based solutions to the Kushner--Stratonovich equation.
        \item An open-source implementation of the method is available.
\end{highlights}

\end{frontmatter}

\section{Introduction}
Obtaining the statistically best estimate of an unobserved stochastic process based on a stochastic observation process generated by it is known as the optimal filtering problem \citep{Jazwinski1970,Liptser2001,Bain2009,Sarkka2013}. The optimal filtering problem is equivalent to determining the evolution of the conditional probability density of the state of the unobserved process given the observation process up to the current time. When the stochastic processes are modeled as It\^o type of stochastic differentiation equations (SDEs), under suitable regularity conditions, the evolution of the conditional density is described by a stochastic partial differential equation (SPDE) known as the Kushner--Stratonovich equation  \citep{Jazwinski1970,RobertS.Liptser2010,Kushner1967,Kushner1967a,Wonham1963}. The Kushner--Stratonovich equation is a nonlinear SPDE that has a complicated structure which makes it hard to solve \citep{Ceci2013}. \citet{Zakai1969} introduced an alternative representation to the Kushner--Stratonovich equation, resulting in a linear SPDE for an unnormalized conditional density of the state. The solutions of the Zakai equation are, nevertheless, still hard to obtain as they are generally infinite-dimensional except when the Lie algebra naturally associated with it is finite-dimensional \citep{Maurel1984,Brigo1999}. In most cases, we can only hope for approximations of the solutions.

Among approximations to the solution of the Kushner--Stratonovich equation is the projection filter which has been proposed in the late 80s  \citep{Hanzon1991, Brigo1995,Brigo1998,Brigo1999}. Essentially, it projects the dynamics of the conditional probability density given by the Kushner--Stratonovich equation onto a finite-dimensional manifold of parametric densities, resulting in a finite-dimensional stochastic differential equation (SDE). 
So far, in practice, the use of the projection filter has been very limited; outside of the Gaussian density family, projection filters have mainly been developed for unidimensional dynamical systems \citep{Brigo1995,Brigo1999,AzimiSadjadi2005,Armstrong2015,Koyama2018, Tronarp2019}. One of the main issues in projection filter is that the filter equation is model-specific, and depends on the chosen parametric family and the natural statistics used. This means that in order to construct the filter, one needs to derive the filtering equation analytically. This manual procedure is prone to mistakes \citep{Armstrong2015}. Another challenge is that in order to propagate the parameters of SDEs, the log-partition function and several expected statistics need to be computed at every step via numerical integration. These numerical integrals are difficult to calculate since one needs to take care of different integrands with different supports simultaneously. To reduce the computational burden, so far, projection filter's numerical implementation has relied upon a recursive procedure to compute the expected values of the natural statistics and the Fisher information matrix \citep{Brigo1995}. This recursion is, in general, only feasible for unidimensional problems. %

Previously, \citet{Armstrong2015} proposed a functional ring as a symbolic layer for abstraction of the operations in a function space including addition, multiplication, scalar multiplication, and differentiation. Whilst this abstraction provides some degree of automation to derivation of the filter equation, numerical integrations for computing the expected values of the natural statistics are still necessary. These integrations can be time-consuming and repetitive, as the expected values of the natural statistics need to be calculated by taking their different supports into account. Moreover, the implementation in \citet{Armstrong2015} is restricted to unidimensional problems. 

Although the projection filter has not recently been used in many applications, there are recent theoretical results for it. For example, \citet{Armstrong2016,Armstrong2018,Armstrong2018a} study alternative projections than the standard Stratonovich projection. Different distance measures than the Hellinger distance have also been recently investigated in \citet{Armstrong2015}. Apart from the exponential family, the mixture family has been considered in \citet{Armstrong2015}, and it was proven that if the mixture components are chosen to be the Galerkin basis functions, then the projection filter under the $L^2$ metric is equivalent to a Galerkin method applied to the Kushner--Stratonovich SPDE. The projection filter is also gaining popularity as an alternative approach to solve finite-dimensional quantum filtering problems \citep{Handel2005,Gao2019,Gao2020,Gao2020a}. In these formulations, the original projection filter formulation is modified using a different type of projection from \citet[Chapter 7]{Amari2000}. For a recent review of these topics, we refer the reader to \citet{Brigo2022}.

There is also another line of research that offers methods to approximate the filtering densities via random samples. Among these methods is particle filtering which is also known as sequential Monte Carlo \citep{Doucet2001}. Although these methods can approximate the optimal filters for almost all types of filtering problems, to avoid particle weight collapse, the number of particles in these methods often scales exponentially with the dimension of the problem \citep{Snyder2008,Poterjoy2015,Beskos2017}. Another class of filters are so-called assumed density filters \citep{Kushner1967b} which include sigma-point filters such as unscented Kalman filter \citep{Julier1997} and other Gaussian approximations of the optimal filter \citep{Ito2000, Saerkkae2013}. These filters can be seen as special cases of projection filters \citep{Brigo1999}. For comprehensive surveys of optimal filtering, we refer the reader to \citet{Bain2009,Crisan2011,Sarkka2013,candy2016}.

The contribution of this paper is to introduce an alternative way to implement projection filters via polynomial expansions and automatic differentiation. Automatic differentiation is a method for determining the exact derivatives of a function (to machine precision) by tracing basic mathematical operations involved in the function \citep{Rall1981,Griewank2008}. It has a computing cost that is proportional to the cost of function evaluation, depending on whether forward, backward, or hybrid accumulations are utilized, as well as on the dimensionality of the Jacobian. In particular, we use a Chebyshev polynomial expansions for unidimensional problems and sparse-grid integrations for higher dimensions in order to efficiently compute the log-partition function. The numerical integration is only used for computing the log-partition function, and the rest of the computations in the filter are computed via automatic differentiation, which significantly simplifies the filter algorithms and eliminates the repetitive numerical integrations for calculating the expected values of the statistics of the exponential family at hand. In order to do so, we use a smooth bijection from the state sample space $\mathcal{X}$ to the canonical open hypercube $(-1,1)^{\otimes d}$ ($\mathcal{X}$ is assumed to be fixed, see Section \ref{sec:log_partition_function_1d}). This setting ensures that the integrations can be performed in a fixed domain. To demonstrate the use of the projection filter, we focus on a class of models, where the coefficients of the dynamic model SDE can be represented by polynomials, the natural statistics are monomials of the states, and the drift coefficient of the observation model SDE is in the span of the natural statistics. In all, our method allows for an automatic implementation of projection filters. That is to say, one only needs to specify the model (symbolically) and a bijection function to carry out the filter computations -- the manual derivation for the filter equation is no more needed. %

Using sparse-grid techniques in optimal filtering is not entirely novel. For instance, there are Gaussian approximation-based filters that use sparse-grid quadratures \citep{Winshcel2010,Jia2012,Baek2013,Radhakrishnan2016,Singh2018}. Furthermore, \citet{Bao2014} employ an adaptive sparse-grid and a finite difference approach to solve the Zakai equation. We propose an alternative approximation for nonlinear optimal filtering problems compared to \citet{Bao2014}. Specifically, we employ a sparse-grid method to calculate the log-partition function that is used in the projection filter approximation. Compared to \citet{Bao2014}, our approach requires much less variables to be propagated at each time step. This is due to that we only propagate the natural parameters of the exponential family and not the quadrature points. Moreover, our approach requires the quadrature points to be calculated only once at the beginning of the computations, whereas the approach of \citet{Bao2014} requires the quadrature points to be calculated at each time step at various grid-refinement phases. Despite the fact that adaptive sparse grids may have fewer nodes while retaining excellent accuracy, the grid-refinement technique is problem-dependent and requires additional computing steps to generate new quadrature points. These procedures involve repeated evaluations of the integrand \citep{Bungartz2004,Ma2009}. Introducing these additional calculations for every time step incurs a substantial computational expense. Similarly to our approach, \citet{Bao2014} employ a hypercube as the sparse-grid domain. Nevertheless, the placement of the hypercube is adjusted each time using a heuristic technique based on a finite number of samples derived from calculating the quadrature point trajectory using the state equations. Our approach is free of these extra computations.
 
Even if we can compute the moments and Fisher metric perfectly, the projection filter still has local projection errors. Fortunately, the estimates for these local projection errors are well-established for certain classes of state-space models and parametric families (with different natural statistics) \citep[Propositions 6.2 and 6.4]{Brigo1999}. We believe that our approximation will be useful in problems where obtaining approximate solutions to Kushner--Stratonovich equations with particle filters is deemed too costly, numerically unstable, or time-consuming, and where the Gaussian approximation is not adequate. 

The article is organized as follows. In Section~\ref{sec:SimplificationOfExponentialFamily}, we present the problem formulation. In particular, we define the class of state-space models (i.e., the SDEs of the dynamic and observation models) for which the proposed projection filter can be constructed. Section \ref{sec:log_partition_function_1d} describes how to approximate the log-partition function via polynomial interpolation and automatic differentiation for unidimensional problems. Section \ref{sec:log_partition_function_nd} describes the use of sparse-grid methods to compute the log-partition function in multidimensional cases. Section \ref{sec:Extension_to_dx_greater_than_1} describes methods to obtain the expected values of the natural statistics and the Fisher metric via automatic differentiation in multidimensional problems. In Section \ref{sec:NumericalExamples}, we conduct numerical experiments to show the efficiency of the proposed projection filter in both unidimensional and multidimensional filtering problems. Section \ref{sec:conclusion} concludes our findings\footnote{We have made the code for the approach described in this work accessible for download. The source code is available at \url{https://to_be_revealed}.}.

\section{Projection filter construction for an exponential family}\label{sec:SimplificationOfExponentialFamily}
In this paper, we consider optimal filtering problems consisting of the following dynamic and observation models:
\begin{subequations}
	\label{eqs:nonlinear_SDE}
	\begin{align}
		dx_t &= f(x_t,t) dt + \sigma(x_t,t) dW_t,\\
		dy_t &= h(x_t,t) dt + dV_t,
	\end{align}
\end{subequations}
where $x_t \in \mathbb{R}^{d_x}$ and $y_t\in \mathbb{R}^{d_y}$. The processes $\left\{ W_t \right\}_{t\geq0}, \left\{ V_t \right\}_{t\geq0}$ are Brownian motions taking values in $\mathbb{R}^{d_w}$ and $\mathbb{R}^{d_y}$ with invertible spectral density matrices $Q_t$ and $R_t$ for all $t\geq0$, respectively. We also define $\alpha(x_t,t) \coloneqq \sigma(x_t,t) Q_t \sigma(x_t,t)^\top$. For the sake of exposition, from now on, we assume that $R_t = I$ for all $t\geq0$. For any vector $v_t \in \mathbb{R}^n$ that is a function of time $t$, we denote the $k$-th element of $v_t$ by $v_{t,k}$. Similarly, for any matrix $X_t \in \mathbb{R}^{m\times n}$ that is a function of time $t$, we denote the $i,j$ entry of $X_t$ as $X_{t,(i,j)}$.

Let the $m$-dimensional exponential family EM$(c)$ be the set of probability densities of the form $p_\theta(x) = \exp(\sum_{i=1}^m c_i (x) \theta_i  - \psi(\theta)), x \in \mathcal{X}$, where $\theta \in \Theta \subset \mathbb{R}^m$ and $\mathcal{X}\subseteq \mathbb{R}^{d_x}$ are the natural parameters and the sample space of the state $x$, respectively. We also assume that the natural statistics $\left\{ c_i \right\}_{i=1,\ldots,m}$ are linearly independent. The log-partition function $\psi$ is given by
\begin{align}
	\psi(\theta) &= \log \int_{\mathcal{X}} \exp\left( \sum_{i=1}^m c_i(x) \theta_i \right) dx. \label{eq:psi}
\end{align}
The inclusion of the log-partition function $\psi$ inside the exponential ensures that the density $p_\theta$ is normalized. Moreover, by definition of the exponential family, $\Theta \subset \mathbb{R}^m$ is selected such that for any $\theta \in \Theta$, $\psi < \infty$  \citep[Section 1.4]{Calin2014}.

Under suitable regularity conditions to guarantee the well-definedness of the system (i.e., the solutions to the SDEs in Equation~\eqref{eqs:nonlinear_SDE} exist, and the projection from the space of probability densities to the finite-dimensional manifold exists), the projection filter using Stratonovich projection to finite-dimensional manifold is given by \citep{Brigo1999}:
\begin{align}
	d\theta_t &= g(\theta_t)^{-1}\E_{\theta_t}\left[ \mathcal{L}_t \left[c\right]  - \frac{1}{2}\abs{h_t}^2 \left[ c - \eta(\theta_t) \right] \right] dt \nonumber\\
	& \; + g(\theta_t)^{-1} \sum_{k=1}^{d_y} \E_{\theta_t}\left[ h_{t,k} \left[ c-\eta(\theta_t) \right] \right] \circ dy_{t,k} .\label{eq:Brigo_dTheta_EM}
\end{align}
In the equation above, $g$ is the Fisher metric corresponding to the probability density family, $\E_{\theta}$ is the expectation with respect to the parametric probability density $p_\theta$, $h_t:= h(\cdot, t)$, $\theta_t\mapsto\eta(\theta_t):=\E_{\theta_t}[c]$, $\mathcal{L}_t$ is the backward diffusion operator, and
$\circ$ denotes the Stratonovich multiplication. 

In what follows, we restrict the class of the dynamics in \eqref{eqs:nonlinear_SDE} with the following assumptions.
\begin{asm}\label{asm:f_h_sigma_is_polynomia}
	The functions $f, h,$ and $\sigma$ are polynomial functions of $x_t$. 
\end{asm}
\begin{asm}\label{asm:c_monomials}
	The natural statistics $\left\{ c_i \right\}$  are selected as monomials of the elements of $x_t$. %
\end{asm}
\begin{asm}\label{asm:EM_c_ast}
	Each component of $h_t$, that is, $h_{t,k}$ is in the span of $\left\{ 1, c_1, \ldots, c_m \right\}$. that is, there exists collection of real numbers $\big\lbrace\lambda^{(i)}_k \colon i=1,\ldots,m, \, k=1,\ldots,d_x\big\rbrace$ such that
	\begin{align*}
		h_{t,k} &= \lambda_k^{(0)} + \sum_{i=1}^m \lambda_k^{(i)} c_i,
	\end{align*}
	for all $k=1,\ldots,d_x$.
\end{asm}

Under Assumption \ref{asm:EM_c_ast}, the resulting exponential family of probability densities is known as the EM$(c^\ast)$ family defined in~\citet{Brigo1999}. For this specific family EM$(c^\ast)$, the projection filter equation reduces to \cite[Theorem 6.3]{Brigo1999}
\begin{align}
	d\theta_t &= g(\theta_t)^{-1}\E_{\theta_t}\left[ \mathcal{L}_t \left[c\right]  - \frac{1}{2}\abs{h_t}^2 \left[ c - \eta(\theta_t) \right] \right] dt + \sum_{k=1}^{d_y} \lambda_k  dy_{t,k}. \label{eq:Brigo_dTheta_EM_c_ast}
\end{align}

Furthermore, \eqref{eq:Brigo_dTheta_EM_c_ast} can be simplified further with Assumptions \ref{asm:f_h_sigma_is_polynomia} and~\ref{asm:c_monomials}. The idea is to express the terms in Equation \eqref{eq:Brigo_dTheta_EM_c_ast} as products of matrices and expectations of statistics under these assumptions. Specifically, we obtain $\mathcal{L}_t[c] - \frac{1}{2}|h_t|^2c = a_0 + A_0 \tilde{c}$ and $\frac{1}{2}|h_t|^2 = b_0 + b_h^\top\tilde{c}$, where $a_0 \in \mathbb{R}^{m}$, $A_0 \in \mathbb{R}^{m\times(m+m_h)}$,  $b_0 \in \mathbb{R}$,  $b_h \in \mathbb{R}^{(m+m_h)}$, and $\tilde{c}^\top = [c^\top, c_h^\top]$. Note that $c_h:\mathbb{R}^{d_x}\rightarrow \mathbb{R}^{m_h}$ is the vector of the remaining monomials of $x_t$ that are not included in $c$. Therefore, we can express \eqref{eq:Brigo_dTheta_EM_c_ast} as
\begin{align}
	d\theta_t = g(\theta_t)^{-1}\left[a_0 + b_0 \eta(\theta_t) + M(\theta_t) \mathbb{E}_{\theta_t}[\tilde{c}] \right]dt + \lambda dy_t, 
\end{align}
where the matrix of the polynomial coefficients $M$ is given by %
\begin{align}
	M &= A_0 + \eta(\theta_t) b_h^\top.
\end{align}
Denoting $\tilde{\eta}:= \mathbb{E}_{\theta_t}[\tilde{c}]$ we arrive at
\begin{align}
	d\theta_t = g(\theta_t)^{-1}\left[ a_0 + b_0 \eta(\theta_t) + M(\theta_t) \tilde{\eta}(\theta_t)\right] dt + \lambda dy_t.\label{eq:Brigo_dTheta_EM_c_ast_polynomials}
\end{align}
Equation \eqref{eq:Brigo_dTheta_EM_c_ast_polynomials} can also be used with other exponential families where the natural statistics are not necessarily monomials. For instance, \eqref{eq:Brigo_dTheta_EM_c_ast_polynomials} can be used in the case where the natural statistics are allowed to have negative powers of $x_t$. However, the automation of implementation for the polynomials is much easier because the polynomials are closed under addition, multiplication, differentiation, and integration. During the implementation, we use SymPy \citep{Meurer2017}, which is a symbolic mathematical package for Python programming language, to automatically calculate the matrices and vectors $a_0$, $b_0$, and $A_0$ in (6) and (7).

In order to solve Equation \eqref{eq:Brigo_dTheta_EM_c_ast_polynomials}, we need to compute the expectation $\tilde{\eta}$ and the Fisher metric $g$. These quantities can be computed from the log-partition function. Specifically, \citet{Amari1985,Calin2014,Brigo1999} show that

\begin{subequations}
	\begin{align}
		\eta_i = \partial_i\psi \quad \text{and} \quad g_{ij}(\theta) = \partial_i \partial_j \psi \label{eq:eta} %
	\end{align}    
\end{subequations}
hold for all $i,j=1,\ldots,d_x$. In the equation above, we denote $\partial_i\psi:=\pdv{\psi}{\theta_i}$ which is a commonly used shorthand in information geometry. Provided that the log-partition function $\psi$ is evaluated numerically, evaluating $\eta$ and $g$ is straightforward with any numerical package that supports automatic differentiation. We will show in the subsequent sections that the approximation error arising from these computations is predictable. Furthermore, for the remaining statistics $c_h$, a simple trick using automatic differentiation can be used to calculate their expectations. We will describe this in Section \ref{sec:Extension_to_dx_greater_than_1}.

\section{Calculating log-partition function in unidimensional case}
\label{sec:log_partition_function_1d}
The previous discussion shows that under Assumptions \ref{asm:f_h_sigma_is_polynomia}, \ref{asm:c_monomials}, and \ref{asm:EM_c_ast}, after re-arranging the filter equation to \eqref{eq:Brigo_dTheta_EM_c_ast_polynomials}, to construct the projection filter, it remains to calculate $\psi$. In this section, we will describe how to compute $\psi$ efficiently in the unidimensional case ($d_x=1$), while the next section covers the multidimensional case. We will start by defining a bijection that will be used in the numerical integration of the log-partition function \eqref{eq:psi}. %
In the most general setting, the support of the parametric probability $p(\cdot, \theta)$ depends on $\theta$. However, \citet{Amari1985} pointed out that this condition poses a substantial difficulty to analysis. Therefore, following \citet[Section 2.1]{Amari1985}, we assume that the support of $p(\cdot,\theta)$ is uniform for all $\theta \in \Theta$. For notational simplicity let us set $\Omega = (-1,1)$, and denote the class of bounded variation functions on $\Omega$ as $BV(\bar{\Omega})$, where $\bar{\Omega}$ is the closure of $\Omega$.

Equation \eqref{eq:psi} can be written as
\begin{subequations}
    \begin{align}
        \psi(\theta) &= \log \int_\Omega z(\tilde{x}) d\tilde{x}, \label{eq:psi_in_tilde} \\
        z(\tilde{x}) &= \phi'(\tilde{x}) \exp\left( \sum_i c_i(\phi(\tilde{x})) \theta_i \right).\label{eq:z_in_tilde}
    \end{align}
\end{subequations}
Let $L^2_w(\Omega)$ be the set of square integrable functions on $\Omega$ with respect to a positive weight function $w(\tilde{x}) = (1-\tilde{x})^{-1/2}$. Let us then define inner product by $\expval{f, g}:= \int_\Omega w(\tilde{x})f(\tilde{x})g(\tilde{x})d\tilde{x}$. The space $L^2_w(\Omega)$ equipped with the inner product is a Hilbert space, and the Chebyshev polynomials are a complete orthogonal basis of $L_w^2(\Omega)$. Assuming that $z\in L^2_w(\Omega)$, we can expand $z(\tilde{x})$ on the Chebyshev polynomial basis:  
    \begin{align}
        z(\tilde{x}) &= \sum_{i=0}^\infty \hat{z}_i(\theta) T_i(\tilde{x})  \nonumber,\\
        \hat{z}_i(\theta) &= \expval{T_i, z}. \label{eq:Chebyshev_z}
    \end{align}
    In this equation, $T_i$ is the $i$-th Chebyshev polynomial of the first kind, and $\hat{z}_i(\theta)$ is the corresponding Chebyshev coefficient. Since the Chebysev series is related to the Fourier series by substituting $\tilde{x}=\cos(\omega)$, $\hat{z}_i(\theta)$ can be numerically computed using the fast Fourier Transform (FFT).  
    Let $S_n^T f := \sum_{j=0}^n \expval{T_j, f} T_j$ be the truncated Chebyshev sum of $f$ up to $n$ first terms. Moreover, let $\psi(\theta)^{(n)}$ be the approximation of $\psi$ where $z$ is replaced by $S_n^T z$:
    \begin{align}
        \psi(\theta)^{(n)} &:=  \log \left( \sum_{j=0}^n \int_\Omega \hat{z}_j(\theta) T_j(\tilde{x})d\tilde{x} \right), \label{eq:psi_in_z_hat} \\
        &= \log \left( \sum_{j=0}^n \hat{z}_j(\theta) \bar{T}_j \right) \nonumber ,\\
        \bar{T}_j &:=\int_\Omega T_j(\tilde{x}) d\tilde{x} = 
        \begin{cases}
            \frac{-1^j + 1}{1- j^2} &, j \neq 0,\\
            0 &, j = 0.
        \end{cases}
    \end{align} 
    In the following proposition, we show that the approximated log-partition function $\psi(\theta)^{(n)}$ converges to $\psi$.
\begin{prop}\label{prp:psi_N_converges}
    Suppose that the natural statistics $\left\{ c_i \right\}$ and the bijection $\phi$ are chosen so that the $z$ in \eqref{eq:z_in_tilde} belongs to $L^2_w(\Omega)$. Then, the approximated log-partition $\psi(\theta)^{(n)}$ converges to $\psi(\theta)$ as $n$ approaches infinity.
\end{prop}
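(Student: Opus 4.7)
The plan is to establish pointwise convergence in $\theta$ by combining three ingredients: (i) $L^2_w$-completeness of the Chebyshev basis, (ii) a Cauchy--Schwarz bound to transfer weighted convergence to the unweighted integral, and (iii) continuity of the logarithm on the positive reals.

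First, I would observe that since the Chebyshev polynomials of the first kind form a complete orthogonal system in $L^2_w(\Omega)$, the hypothesis $z\in L^2_w(\Omega)$ immediately yields $\|S_n^T z - z\|_{L^2_w}\to 0$ as $n\to\infty$. By orthogonality, the coefficients $\hat{z}_j(\theta)=\langle T_j,z\rangle$ are exactly the Fourier--Chebyshev coefficients, so the quantity appearing inside the logarithm in \eqref{eq:psi_in_z_hat}, namely $\sum_{j=0}^n\hat{z}_j(\theta)\,\bar{T}_j$, coincides with $\int_\Omega (S_n^T z)(\tilde x)\,d\tilde x$.

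Next, I would transfer this weighted convergence to the plain (unweighted) integral. Writing $1 = w(\tilde x)^{-1/2}\cdot w(\tilde x)^{1/2}$ and applying Cauchy--Schwarz in $L^2(\Omega)$ gives
\[
\left|\int_\Omega \bigl(S_n^T z - z\bigr)(\tilde x)\,d\tilde x\right| \le \left(\int_\Omega w(\tilde x)^{-1}\,d\tilde x\right)^{1/2}\,\|S_n^T z - z\|_{L^2_w(\Omega)}.
\]
The first factor is finite because $w^{-1}$ is continuous and bounded on the compact interval $\bar\Omega$; combined with the previous step, this yields $\int_\Omega S_n^T z\,d\tilde x\to\int_\Omega z\,d\tilde x$.

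Finally, since $z(\tilde x)=\phi'(\tilde x)\exp\bigl(\sum_i c_i(\phi(\tilde x))\theta_i\bigr)>0$ pointwise and $\int_\Omega z\,d\tilde x = e^{\psi(\theta)}\in(0,\infty)$ by $\theta\in\Theta$, the continuity of $\log$ on $(0,\infty)$ together with convergence of the unweighted integrals gives $\psi(\theta)^{(n)}=\log\int_\Omega S_n^T z\,d\tilde x\to\log\int_\Omega z\,d\tilde x=\psi(\theta)$. The only delicate point I anticipate is that $\int_\Omega S_n^T z\,d\tilde x$ need not be positive for small $n$, so $\psi(\theta)^{(n)}$ may be undefined initially; however, the $L^2_w$-convergence forces this integral eventually into a neighbourhood of the strictly positive limit $e^{\psi(\theta)}$, so $\psi(\theta)^{(n)}$ is well-defined for all sufficiently large $n$, which is all that the convergence statement requires.
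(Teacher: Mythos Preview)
Your proof is correct and follows essentially the same approach as the paper: $L^2_w$-convergence of the Chebyshev partial sums, a Cauchy--Schwarz step to pass from the weighted norm to the unweighted integral, and continuity of the logarithm at the positive limit $e^{\psi(\theta)}$. Your explicit treatment of the eventual positivity of $\int_\Omega S_n^T z\,d\tilde x$ is in fact more careful than the paper's own argument, which tacitly assumes this when dividing by $\int_\Omega S_n^T z\,d\tilde x$ in its chain of inequalities.
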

\begin{proof}
    From the definition of $\psi$ and $\psi^{(n)}$ we obtain:
    \begin{align*}
        \abs{\psi(\theta)- \psi(\theta)^{(n)}} &= \abs{\log \left[ \dfrac{\int_\Omega z d\tilde{x}}{\int_\Omega S_n^T z d\tilde{x}} \right]}\\
        &= \abs{\log \left[ \dfrac{\int_\Omega z-S_n^T z d\tilde{x} + \int_\Omega S_n^T z d\tilde{x} }{\int_\Omega S_n^T z d\tilde{x}} \right]}\\
        &\leq \abs{\log \left[ \dfrac{\int_\Omega\abs{z-S_n^T z} d\tilde{x} + \int_\Omega S_n^T z d\tilde{x} }{\int_\Omega S_n^T z d\tilde{x}} \right]}\\
        &\leq \abs{\log \left[  \dfrac{\left( \int_\Omega 1^2 d\tilde{x} \right)^{1/2} \left( \int_\Omega\abs{z-S_n^T z}^2 d\tilde{x}\right)^{1/2} + \int_\Omega S_n^T z d\tilde{x} }{\int_\Omega S_n^T z d\tilde{x}} \right]}\\
        &\leq \abs{\log \left[  \dfrac{ \sqrt{2} \left( \int_\Omega\abs{z-S_n^T z}^2 w(\tilde{x}) d\tilde{x}\right)^{1/2} + \int_\Omega S_n^T z d\tilde{x} }{\int_\Omega S_n^T z d\tilde{x}} \right]}\\
        &= \abs{ \log \left[  \dfrac{ \sqrt{2} \norm{z-S^T_n z}_2 + \int_\Omega S_n^T z d\tilde{x} }{\int_\Omega S_n^T z d\tilde{x}} \right] },
    \end{align*}
    where $\norm{\cdot}_2$ is $L^2_w \left( \Omega \right)$ norm. The result follows immediately. \qed
\end{proof}
\begin{rem}
    Proposition \ref{prp:psi_N_converges} shows that approximation \eqref{eq:psi} converges. Indeed, since $z$ in \eqref{eq:z_in_tilde} is likely to be several times continuously differentiable or even analytic, even a stronger convergence result can be expected since the Chebyshev approximation error $\abs{f- S_n^T f}$ has $\mathcal{O}(r^{-n})$ rate when $f$ is analytic for some $r>1$ \citep[Theorem 5.16]{Mason2003}. In this case $\abs{\psi(\theta)- \psi(\theta)^{(n)}}$ also decays exponentially as $\mathcal{O}(r^{-n})$.  
\end{rem}

    In order to use the Chebyshev sum for approximating $\psi$, it is inevitable to compute its inner products. This can be done either by calculating the inner products or the FFT, at the cost of complicated derivations and demanding computations. Fortunately, by using the orthogonal interpolation approach \citep{Mason2003} for approximating $z$, computing the inner products is no longer needed. This is can be done follows. An interpolant of a function $f$ with polynomial of order $n-1$ is a polynomial $p_n(x) = J_{n-1}f (x)$, such that when it is evaluated at the $n$ distinct interpolation points $\left\{ x_i \right\}$ it has exactly the same value as $f(x_i)$. That is, for any $i = 1,\ldots, n$:
    \begin{align*}
        p_n(x_i) = c_0 + c_1 x_i + \cdots + c_n x_i^{n-1} &= f(x_i).
    \end{align*}
    The resulting polynomial can be written in terms of the Lagrange polynomials \citep[Lemma 6.3]{Mason2003} as 
    \begin{align*}
        J_{n-1} f(x) &= \sum_{i=1}^{n}\ell_i(x)f(x_i).
    \end{align*}
    When the interpolation points are selected to be the zeros of the Chebyshev polynomial of the first kind, $\ell_i$ is given by \citep[Corollary 6.4 A]{Mason2003}
    \begin{align}
        \ell_i(x) &= \dfrac{T_{n}(x)}{(n+1)(x-x_i)U_{n-1}(x_i)},
    \end{align}
    where $U_n$ is the Chebyshev polynomial of the second kind  of order $n$.
    When approximating $f$ by its $n$-th order Chebyshev interpolant $J_{n-1} f$, we can approximate the definite integral $\int_\Omega f w dx$ by what is known as Gauss--Chebyshev quadrature (of the first kind) \citep[Theorem 8.4]{Mason2003}:
    \begin{align}
        \int_\Omega f(\tilde{x}) w(\tilde{x}) d\tilde{x} &\approx Q^1_n f, \nonumber \\
        Q^1_n f &:= \int_\Omega J_{n-1} f(\tilde{x}) w(\tilde{x}) d\tilde{x} =  \sum_{i=1}^n \dfrac{\pi}{n}f(\tilde{x}_i),\nonumber \\
        w(\tilde{x}) &= (1-\tilde{x}^2)^{-1/2}, \nonumber\\
        \tilde{x}_i &= \cos\left( \dfrac{(i-\frac{1}{2})\pi}{n} \right). \label{eq:Gauss_Chebyshev_quadrature}
    \end{align}
    This approximation is based on the continuous orthogonality of the Chebyshev polynomials (that is if $i\neq j$, $\langle T_i,T_j\rangle = 0$)  and is exact if $f$ is a polynomial of degree $2n-1$ or less \citep[Theorem 8.2]{Mason2003}. It is known that for a function $f \in BV(\bar{\Omega})$, the integration error satisfies $E_n f \leq \pi V_{\bar{\Omega}}(f)/(2n)$ where
    \begin{align}
        E_n f &:= \abs{\int_\Omega f(\tilde{x}) w(\tilde{x}) d\tilde{x} - Q^1_n f}, \label{eq:integration_error}
    \end{align} 
    and $V_{\bar{\Omega}}(f)$ stands for the total variation of $f$ on $\bar{\Omega}$ \citep{Riess1971,Chui1972}. 
    
    To compute the integral of $z(\tilde{x})$ given in \eqref{eq:z_in_tilde}, we will apply the quadrature rule \eqref{eq:Gauss_Chebyshev_quadrature}. We must be cautious, though, because not every smooth bijection $\phi$ used to compute the integral of $z$ on $\Omega$ will be permissible. In fact, for some $\theta>0$, let $p_\theta = \frac{1}{\sqrt{2\pi}} \exp(- (x-\theta)^2/2)$. If we choose $\phi = \sqrt{2} \erf(\tilde{x})^{-1}$, then $z(\tilde{x}) = \frac{1}{2}\exp(\frac{2\phi(\tilde{x})\theta-\theta^2}{2})$, which tends to infinity as  $\tilde{x} \to 1$, meaning that $w^{-1} z$ may not be in $BV(\bar{\Omega})$, resulting in numerical integration failure. We offer conditions in the following lemma that may be used to verify that the numerical quadrature \eqref{eq:Gauss_Chebyshev_quadrature} produces a consistent integration result, that is, the error decreases with increasing $n$. Furthermore, because the domain of the bijection $\phi$ is $\Omega$, we assign $z(1) = \lim_{\tilde{x}\uparrow 1 } z(\tilde{x})$, and $z(-1) = \lim_{\tilde{x}\downarrow -1 } z(\tilde{x})$.

    In the following, we present the main results of this section that give upper-bound estimates of the approximation errors of the log-partition functions, and the first and second moments of $c(x)$.  

    \begin{lem}\label{lem:boundedVariation}
        Let $\phi: \Omega \rightarrow \mathbb{R}$ be a smooth bijection, $w(\tilde{x}) = (1-\tilde{x}^2)^{-1/2}$, and Assumption \ref{asm:c_monomials} be satisfied. Let also $\zeta := \phi^{-1}: \mathbb{R}\rightarrow \Omega$.
        If for any $\theta \in \Theta$,
        \begin{subequations}
            \begin{align}
                \E_\theta \left[ \dfrac{1}{\zeta'(x)^2} \right] &< \infty \label{eq:lemma_requirement_zeta},\\  \E_\theta \left[ \dfrac{\zeta''(x)^2}{\zeta'(x)^4} \right] &< \infty \label{eq:lemma_requirement_zeta_ratio}, 
            \end{align}    
        \end{subequations}
        then $r$,$s_k$, and $t_{l,k}$, defined by
        \begin{align*}
            r(\tilde{x}) &:= z(\tilde{x}) w(\tilde{x})^{-1}, &
            s_k(\tilde{x}) &:= c_k(\phi(\tilde{x})) r(\tilde{x}) ,  & 
            t_{l,k}(\tilde{x}) &:= c_l(\phi(\tilde{x})) s_k(\tilde{x}),
        \end{align*} 
        belong to $BV(\bar{\Omega})$. 
        \end{lem}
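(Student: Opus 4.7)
The plan is to establish BV-membership via the standard criterion: a function in $C^1(\Omega)$ admitting finite boundary limits at $\pm 1$ and whose derivative is in $L^1(\Omega)$ belongs to $BV(\bar\Omega)$. Smoothness of each of $r$, $s_k$, $t_{l,k}$ on $\Omega$ is immediate from the smoothness of $\phi$ and the monomial form of the $c_i$ (Assumption~\ref{asm:c_monomials}), and the boundary limits are granted by the extension convention $z(\pm 1) = \lim z$ stated just before the lemma. Thus the task reduces to bounding $\int_\Omega |r'|$ and, by an essentially identical argument with extra polynomial factors, $\int_\Omega |s_k'|$ and $\int_\Omega |t_{l,k}'|$.

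The central technical maneuver is the change of variables $x = \phi(\tilde x)$, i.e.\ $\tilde x = \zeta(x)$. Implicit differentiation of $\zeta\circ\phi = \mathrm{id}$ yields $\phi'(\tilde x) = 1/\zeta'(\phi(\tilde x))$ and $\phi''(\tilde x) = -\zeta''(\phi(\tilde x))/\zeta'(\phi(\tilde x))^3$. Writing $E(\tilde x) = \exp(\sum_i \theta_i \phi(\tilde x)^{k_i})$, any integral of the form $\int_\Omega F(\tilde x)\phi'(\tilde x) E(\tilde x)\,d\tilde x$ equals $e^{\psi(\theta)}\mathbb{E}_\theta[F(\zeta(\cdot))]$. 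I would arrange each summand of $r'$ into this factored form after taking absolute values.

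Differentiating $r = \phi'(1-\tilde x^2)^{1/2}E$ via the logarithmic derivative produces three summands. Two of them are benign: the $\phi''$-term maps under the substitution and the bound $(1-\zeta^2)^{1/2}\le 1$ to $e^{\psi(\theta)}\mathbb{E}_\theta[|\zeta''|/\zeta'^2]$, finite by Cauchy--Schwarz and \eqref{eq:lemma_requirement_zeta_ratio}; the term coming from differentiating $E$ maps to $e^{\psi(\theta)}\mathbb{E}_\theta[|\sum_i \theta_i k_i x^{k_i-1}|/\zeta'(x)]$, finite by Cauchy--Schwarz using \eqref{eq:lemma_requirement_zeta} together with the fact that $p_\theta$ has all polynomial moments finite in the interior of $\Theta$. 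For $s_k$ and $t_{l,k}$, the Leibniz rule merely introduces further monomial factors in $\phi(\tilde x)=x$, which again contribute only polynomial moments and cause no new difficulty.

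The main obstacle is the remaining summand $\tilde x\,\phi'(\tilde x)(1-\tilde x^2)^{-1/2}E(\tilde x)$, whose absolute integral transforms to $e^{\psi(\theta)}\mathbb{E}_\theta[|\zeta(x)|(1-\zeta(x)^2)^{-1/2}]$. Since $(1-\zeta(x)^2)^{-1/2}$ diverges as $|x|\to\infty$, this does not reduce directly to the hypotheses \eqref{eq:lemma_requirement_zeta}--\eqref{eq:lemma_requirement_zeta_ratio}. I plan to establish a pointwise tail comparison of the form $(1-\zeta(x)^2)^{-1/2} \le C(1 + 1/\zeta'(x))$ for $|x|$ outside a compact set, by writing $1-\zeta(x)^2 = (1-\zeta(x))(1+\zeta(x))$ and exploiting $\zeta'(x) = -\tfrac{d}{dx}(1-\zeta(x))$ (for $x>0$, with the mirror statement for $x<0$) via a L'H\^opital-type comparison of $1-\zeta$ and a power of $\zeta'$. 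Cauchy--Schwarz then dominates the offending expectation by a constant multiple of $\sqrt{\mathbb{E}_\theta[1/\zeta'(x)^2]}$, which is finite by \eqref{eq:lemma_requirement_zeta}. Justifying this comparison cleanly in the generality claimed is the step I expect to require the most care.
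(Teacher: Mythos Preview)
Your handling of the first two summands (the $\phi''$-term and the $E'$-term) is essentially the same as the paper's, as is the use of the change of variables $x=\phi(\tilde x)$ and the Cauchy--Schwarz reduction to the hypotheses \eqref{eq:lemma_requirement_zeta}--\eqref{eq:lemma_requirement_zeta_ratio}. The gap is in the third summand. The pointwise tail comparison you propose,
\[
(1-\zeta(x)^2)^{-1/2} \le C\bigl(1 + 1/\zeta'(x)\bigr),
\]
is equivalent (near $x\to+\infty$) to the boundedness of $\zeta'(x)/\sqrt{1-\zeta(x)}$, i.e.\ to $(\sqrt{1-\zeta})'$ being bounded. Nothing in the hypotheses forces this: conditions \eqref{eq:lemma_requirement_zeta}--\eqref{eq:lemma_requirement_zeta_ratio} are \emph{integrability} conditions against $p_\theta$, not pointwise growth restrictions on $\zeta$, and smooth monotone bijections $\zeta$ with arbitrarily steep and flat stretches can violate the bound while still satisfying the moment assumptions. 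A L'H\^opital argument comparing $1-\zeta$ to a power of $\zeta'$ does not go through without an extra monotonicity or regularity assumption on $\zeta'$ that the lemma does not make.

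The paper avoids this term entirely by a simple factoring trick: since $w^{-1}(\tilde x)=(1-\tilde x^2)^{1/2}$ itself lies in $BV(\bar\Omega)$ (its derivative has $L^1$-norm $2$) and $BV(\bar\Omega)$ is an algebra, it suffices to show $z\in BV(\bar\Omega)$, and then $r=z\,w^{-1}$, $s_k=(c_k\!\circ\!\phi)\,z\,w^{-1}$, $t_{l,k}=(c_l\!\circ\!\phi)(c_k\!\circ\!\phi)\,z\,w^{-1}$ are automatically $BV$. Differentiating $z=\phi'\cdot\beta(\phi)$ produces only your two ``benign'' summands and never the weight factor, so the proof closes exactly with the estimates you already have. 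If you prefer to stay with your decomposition, note that once $z\in BV(\bar\Omega)$ is established it is in particular bounded, and then
\[
\int_\Omega \bigl|z(\tilde x)\bigr|\,\bigl|(w^{-1})'(\tilde x)\bigr|\,d\tilde x \le \|z\|_\infty \int_\Omega \frac{|\tilde x|}{\sqrt{1-\tilde x^2}}\,d\tilde x = 2\|z\|_\infty < \infty,
\]
which handles the third summand without any pointwise comparison; but this is just the algebra argument unpacked.
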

        \begin{proof}
            First we will prove that $r\in BV(\bar{\Omega})$. To see this, it is enough to show that $z\in BV(\bar{\Omega})$ since $w^{-1} \in BV(\bar{\Omega})$ and $BV(\bar{\Omega})$ is an algebra. 
        
            Before showing that $z\in BV(\bar{\Omega})$, we claim that for $\theta \in \Theta$, $\E_\theta[\abs{x^k}]< \infty, \forall k \in \mathbb{N}$ when Assumption \ref{asm:c_monomials} holds. To see this, we observe that the moment generating function $\varphi$ with respect to the natural statistics $\left\{ c_i \right\}$ is given by
            \begin{align*}
                \varphi &:= \E_\theta \left[ \exp(v^\top c(x)) \right] \\
                &= \frac{1}{\exp(\psi)}\int_{\mathbb{R}} \exp(c^\top(x)\theta) \exp(v^\top c(x)) dx \\
                &= \exp(\psi(\theta+v) - \psi(\theta)).
            \end{align*}
            Therefore, $\E_\theta[c_i^k(x)] = \left.\pdv[k]{\varphi}{v_i}\right|_{v=0}$ since $\psi$ is infinitely differentiable on $\Theta$ \citep[Lemma 2.3]{Brigo1995}. Furthermore, since $c_i$ is a monomial, we have proven the claim.
            
            Since $z$ is differentiable on $\Omega$, then the total variation of $z$ is given by $\int_\Omega \abs{z'}d\tilde{x}$. Let $\beta(x) = \exp(\sum_{i=1}^m c_i(x)\theta_i)$. Without loss of generality, assume that $\phi$ is monotonically increasing, otherwise, choose the bijection as $-\phi$. Since $\phi$ is monotonically increasing, the total variation of $z$ is given by
            \begin{align*}
                V_{\bar{\Omega}}(z) &= \int_\Omega \abs{z'}d\tilde{x} = \int_\Omega \abs{\left(  \phi'(\tilde{x}) \beta(\phi(\tilde{x}))\right)'}d\tilde{x}\\
                &\leq  \int_\Omega \abs{
                     \dfrac{\phi''(\tilde{x})}{\phi'(\tilde{x})} 
                } z(\tilde{x})d\tilde{x}
                + 
                \int_\Omega \sum_{i=1}^m \abs{\phi'(\tilde{x}) c'_i(x)\theta_i} z(\tilde{x}) d\tilde{x}.
            \end{align*}
            Using $x=\phi(\tilde{x})$ and 
            \begin{align*}
                \zeta'(x) &= \frac{1}{\phi'(\tilde{x})}, & \dfrac{\phi''(\tilde{x})}{\phi'(\tilde{x})} &= -\dfrac{\zeta''(x)}{\zeta'(x)^2},
            \end{align*}
            and by introducing a positive measure on $\mathbb{R}$, $d\mu(x) = \beta(x)dx = z(\tilde{x})d\tilde{x}$, we can further simplify
            \begin{align*}
                \int_\Omega \abs{
                     \dfrac{\phi''(\tilde{x})}{\phi'(\tilde{x})} 
                } z(\tilde{x})d\tilde{x}
                &+
                \int_\Omega \sum_{i=1}^m \abs{\phi'(\tilde{x}) c'_i(x)\theta_i} z(\tilde{x})d\tilde{x}\\
                &= \int_\mathbb{R} \abs{
                    \dfrac{\zeta''(x)}{\zeta'(x)^2} 
               } d\mu
               +
               \int_\mathbb{R} \sum_{i=1}^m \abs{\frac{1}{\zeta'(x)} c'_i(x)\theta_i} d\mu.
            \end{align*}
            By Jensen inequality and \eqref{eq:lemma_requirement_zeta_ratio}, the first part of the righthand side is finite,
            \begin{align*}
                \int_\mathbb{R} \abs{
                    \dfrac{\zeta''(x)}{\zeta'(x)^2} 
               } d\mu = \exp(\psi) \E_\theta \left[ \abs{ \dfrac{\zeta''(x)}{\zeta'(x)^2}} \right]  < \infty. 
            \end{align*}
            The second part is also finite, since
            \begin{align*}
                \int_\mathbb{R} \sum_{i=1}^m \abs{\frac{1}{\zeta'(x)} c'_i(x)\theta_i} d\mu &= \exp(\psi) \E_\theta \left[ \sum_{i=1}^m \abs{\frac{1}{\zeta'(x)} c'_i(x)\theta_i} \right] \\
                &\leq \exp(\psi) \sum_{i=1}^m \E_\theta \abs{\frac{1}{\zeta'(x)} c'_i(x)\theta_i} \\
                &\leq \exp(\psi) \sum_{i=1}^m \left[ \E_\theta \left[ \frac{1}{\zeta'(x)^2} \right]  \right]^{1/2} \left[ \E_\theta \left[ c'_i(x)^2 \right] \right]^{1/2}\abs{\theta_i} 
            \end{align*}
            which is less than infinity by \eqref{eq:lemma_requirement_zeta}, Assumption \ref{asm:c_monomials}, and since any moments of $x$ are finite.
            To show that $s_k \in BV(\bar{\Omega})$ it is enough to check that $c_k(\phi) z$ is of bounded variation on $\Omega$. As before, we get
            \begin{align*}
                V_{\bar{\Omega}}(c_k(\phi) z ) &= \int_\Omega \abs{ \left( c_k(\phi(\tilde{x})) z(\tilde{x}) \right)' }d\tilde{x} \nonumber\\
                &\leq \int_\mathbb{R} \abs{\dfrac{c_k'(x)}{\zeta'(x)}} d\mu
                + \int_\mathbb{R} \abs{\dfrac{c_k(x)\zeta''(x)}{\zeta'(x)^2}}d\mu 
                + \int_\mathbb{R} \sum_{i=1}^m \abs{\dfrac{c_k(x)c_i'(x)}{\zeta'(x)}\theta_i}d\mu.
            \end{align*}
            The first and last terms of the right hand side of the last equation are finite because of condition \eqref{eq:lemma_requirement_zeta} and since any moment of $x$ is finite. The second part follows from H\"older's inequality and from condition \eqref{eq:lemma_requirement_zeta_ratio}.
            
            By Assumption \ref{asm:c_monomials}, there exists $q\in \mathbb{N}$ such that $c_l(x)c_k(x)=x^q$. Therefore, the result for $t_{l,k}\in BV(\bar{\Omega})$ follows similarly as $s_k \in BV(\bar{\Omega})$, which completes the proof. \qed
        \end{proof}
        
        Lemma~\ref{lem:boundedVariation} shows that it is sufficient to have the conditions~\eqref{eq:lemma_requirement_zeta} and~\eqref{eq:lemma_requirement_zeta_ratio} hold in order to have $r, s_k, t_{l,k} \in BV(\bar{\Omega})$.  Consider two bijections $\phi=\tan(\frac{\pi}{2}\tilde{x})$ and $\phi = \frac{\tilde{x}}{1-\tilde{x}^2}$. Since their  corresponding $\abs{\frac{1}{\zeta'(x)^2}}$ and $\abs{\frac{\zeta''(x)}{\zeta'(x)^2}}$ are less than absolute values of some polynomials, the conditions ~\eqref{eq:lemma_requirement_zeta} and ~\eqref{eq:lemma_requirement_zeta_ratio} hold for any density from the exponential families with monomial natural statistics. On the other hand, the bijection $\phi = \tanh^{-1}(\tilde{x})$ under $p_\theta=\frac{1}{\sqrt{2\pi}} \exp(- (x-\theta)^2/2)$ satisfies these conditions for $\theta \in (-\pi,\pi)$. For related discussion on the effect of bijection in the context of sparse-grid integration, see \citet{Griebel2014}. 
        
        As a direct consequence of Lemma \ref{lem:boundedVariation} and results from \citet{Chui1972}, we have the following upper bounds on the integration errors of $r, s_i$ and $t_{l,k}$.

        \begin{cor}
            The integration errors of $r, s_i$ and $t_{l,k}$ for $i,l,k=1,\ldots,m$ satisfy 
            \begin{subequations}
                \begin{align}
                E_n r &\leq \pi V_{\bar{\Omega}}(r)/(2n),\\
                E_n s_i &\leq \pi V_{\bar{\Omega}}(s_i)/(2n),\\
                E_n t_{l,k} &\leq \pi V_{\bar{\Omega}}(t_{l,k})/(2n),
            \end{align}\label{eqs:integration_error_bound_rst}
            \end{subequations}
            respectively.
        \end{cor}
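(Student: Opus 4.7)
The plan is simply to combine two ingredients that are already in hand: Lemma~\ref{lem:boundedVariation}, which asserts that under the hypotheses \eqref{eq:lemma_requirement_zeta}--\eqref{eq:lemma_requirement_zeta_ratio} and Assumption~\ref{asm:c_monomials} the functions $r$, $s_k$, and $t_{l,k}$ all belong to $BV(\bar{\Omega})$, and the classical quadrature error bound $E_n f \leq \pi V_{\bar{\Omega}}(f)/(2n)$ for any $f\in BV(\bar{\Omega})$ recalled just after Equation~\eqref{eq:integration_error} from \citet{Riess1971,Chui1972}. Because the corollary does not introduce any new hypotheses beyond those already used in Lemma~\ref{lem:boundedVariation}, the corollary is essentially a one-line consequence.

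Concretely, I would structure the proof as follows. First, invoke Lemma~\ref{lem:boundedVariation} to conclude membership $r,s_i,t_{l,k}\in BV(\bar{\Omega})$ for every $i,l,k=1,\dots,m$; the needed total variations $V_{\bar{\Omega}}(r)$, $V_{\bar{\Omega}}(s_i)$, $V_{\bar{\Omega}}(t_{l,k})$ are therefore finite. Second, apply the Chui--Riess error bound to each of these three functions in turn, noting that the weight function $w$ and the Gauss--Chebyshev nodes $\tilde{x}_i = \cos((i-\tfrac12)\pi/n)$ defining $Q_n^1$ in \eqref{eq:Gauss_Chebyshev_quadrature} are the ones for which that bound was derived. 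This yields the three inequalities \eqref{eqs:integration_error_bound_rst} directly.

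I do not expect any substantive obstacle: the delicate work has already been done in Lemma~\ref{lem:boundedVariation}, where the conditions \eqref{eq:lemma_requirement_zeta}--\eqref{eq:lemma_requirement_zeta_ratio} were shown to imply bounded variation via the identity $\phi''/\phi' = -\zeta''/(\zeta')^2$, the change of measure $d\mu = z\,d\tilde{x}$, and H\"older's inequality. The only minor point worth flagging explicitly is that the integrands $r$, $s_i$, $t_{l,k}$ are precisely the functions $f$ to which the quadrature rule $Q_n^1$ is applied when evaluating $\int_\Omega z\,d\tilde{x}$, $\int_\Omega c_k(\phi)z\,d\tilde{x}$, and $\int_\Omega c_l(\phi)c_k(\phi)z\,d\tilde{x}$ respectively, because the factor $w^{-1}$ embedded in $r$ (and hence in $s_k, t_{l,k}$) cancels against the weight $w$ that the Gauss--Chebyshev rule implicitly carries in \eqref{eq:Gauss_Chebyshev_quadrature}. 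Once this identification is recorded, the corollary follows at once. The whole proof should therefore amount to one sentence citing Lemma~\ref{lem:boundedVariation} and \citet{Chui1972}.
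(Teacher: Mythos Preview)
Your proposal is correct and matches the paper's approach exactly: the paper does not give a separate proof but simply introduces the corollary as ``a direct consequence of Lemma~\ref{lem:boundedVariation} and results from \citet{Chui1972},'' which is precisely the one-sentence argument you outline.
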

        We now present the main result for error bounds of $\psi(\theta)^{(n)}$, $\partial_i \psi(\theta)^{(n)}$, and $\partial_i\partial_j \psi(\theta)^{(n)}$.

    \begin{thm}\label{thm:error_estimation}
        Let Assumption \ref{asm:c_monomials} be satisfied. Let $\phi$ be a smooth bijection satisfying the conditions in Lemma \ref{lem:boundedVariation}, and $r, \left\{ s_k \right\}, \left\{ t_{l,k} \right\}$ be functions defined as in Lemma \ref{lem:boundedVariation} whose
         integrations errors are given by \eqref{eqs:integration_error_bound_rst}.
        Denote by $\psi(\theta)^{(n)}$ the approximation of log-partition function $\psi$ where $\int_\Omega z d\tilde{x}$ in \eqref{eq:psi_in_tilde} is replaced by $Q_1^n r$. The log-partition function approximation error satisfies, 
        \begin{align*}
            \abs{\psi-\psi^{(n)}} &\leq \dfrac{1}{Q^1_n r} E_n r,
        \end{align*}
        where $Q^1_n r>0$. Furthermore, the approximation errors of the first and the second derivatives of $\psi$ have the following upper bounds:
            \begin{align*}
                \abs{\partial_i\psi-\partial_i\psi^{(n)}}&\leq \dfrac{1}{\exp(\psi)} E_n s_i + \abs{ \dfrac{Q^1_n s_i}{\exp(\psi)Q^1_n r }} E_n r ,\\
                \abs{\partial_j\partial_i\psi(\theta)- \partial_j\partial_i\psi(\theta)^{(n)}} &\leq 
                \dfrac{1}{\exp(\psi)} E_n t_{j,i} + \abs{ \dfrac{Q^1_n t_{j,i}}{\exp(\psi)Q^1_n r }} E_n r  \nonumber\\ 
            &+ \dfrac{1}{\exp(\psi)} E_n s_j + \abs{ \dfrac{Q^1_n s_j}{\exp(\psi)Q^1_n r }} E_n r.
            \end{align*}
    \end{thm}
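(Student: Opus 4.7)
The plan is to reduce each of the three quantities to a ratio of integrals (for the exact value) versus the same ratio of Gauss--Chebyshev sums (for the approximation), and then to pull in the quadrature-error bounds on $E_n r$, $E_n s_i$, and $E_n t_{l,k}$ just established in the preceding corollary. Throughout, I will exploit two structural facts: $\partial_i r = c_i(\phi)\,r = s_i$ and, similarly, $\partial_j s_i = t_{j,i}$; and the quadrature $Q_n^1$ is a finite linear combination of point evaluations in $\tilde x$, so it commutes with $\partial_{\theta_i}$.

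For $\abs{\psi - \psi^{(n)}}$, I would start from $\psi = \log\!\int_\Omega r\,w\,d\tilde x$ and $\psi^{(n)} = \log Q_n^1 r$ and apply the mean value theorem to $\log$ on the interval between these two positive numbers (positivity of $Q_n^1 r$ follows since $r\ge 0$). The derivative of $\log$ is bounded by $1/Q_n^1 r$, and the numerator $\bigl|\int_\Omega r w\,d\tilde x - Q_n^1 r\bigr|$ is by definition $E_n r$, giving the stated bound directly.

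For $\abs{\partial_i\psi - \partial_i\psi^{(n)}}$, I would differentiate under the integral (resp. quadrature) sign, justified for the exact side by dominated convergence using the bounded-variation estimates from Lemma~\ref{lem:boundedVariation}. This yields $\partial_i\psi = \int_\Omega s_i\,w\,d\tilde x /\exp(\psi)$ and $\partial_i\psi^{(n)} = Q_n^1 s_i / Q_n^1 r$. The key algebraic step is then the identity
\begin{align*}
\frac{A}{B} - \frac{\hat A}{\hat B} \;=\; \frac{A - \hat A}{B} \;+\; \frac{\hat A\,(\hat B - B)}{B\,\hat B},
\end{align*}
with $A=\int s_i w\,d\tilde x$, $\hat A = Q_n^1 s_i$, $B = \exp(\psi)$, $\hat B = Q_n^1 r$. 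Taking absolute values, applying the triangle inequality, and using $|\int s_i w - Q_n^1 s_i| = E_n s_i$ and $|\int r w - Q_n^1 r| = E_n r$ reproduces the claimed bound term by term.

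For $\abs{\partial_j\partial_i\psi - \partial_j\partial_i\psi^{(n)}}$, a further differentiation gives $\partial_j\partial_i\psi = \int t_{j,i}\,w\,d\tilde x/\exp(\psi) - (\int s_i w)(\int s_j w)/\exp(2\psi)$ and the analogous formula, with $Q_n^1$ in place of the integrals and $Q_n^1 r$ in place of $\exp(\psi)$, for $\partial_j\partial_i\psi^{(n)}$. I would then split the difference into the ``$t_{j,i}$-ratio'' part and the ``centering'' part $A_i A_j/B^2 - \hat A_i \hat A_j/\hat B^2$. Reapplying the decomposition above with $t_{j,i}$ playing the role of $s_i$ immediately yields the first two terms of the claim. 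The centering part is expanded as $\eta_i(\eta_j - \hat\eta_j) + \hat\eta_j(\eta_i - \hat\eta_i)$ (or the symmetric variant), and an application of the first-derivative bound to the ratio with numerator $s_j$ produces the remaining two terms. The main obstacle here is precisely the bookkeeping in this centering step: one has to pick the rearrangement of the product-of-ratios difference that absorbs the residual $\eta$-type prefactors into the $\exp(\psi)$-denominator so that only a single quadrature-error factor ($E_n s_j$ or $E_n r$) survives in each summand and the stated form is recovered.
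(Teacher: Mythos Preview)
Your approach is essentially the same as the paper's: express each quantity as a ratio of the exact integral to the corresponding quadrature sum and then apply the identity $A/B - \hat A/\hat B = (A-\hat A)/B + \hat A(\hat B - B)/(B\hat B)$, which is precisely the decomposition the paper uses line by line for $\partial_i\psi$ and then reuses (``a similar technique'') for $\partial_j\partial_i\psi$. The only cosmetic difference is in the first bound: the paper writes $|\psi-\psi^{(n)}| = \bigl|\log\bigl(1 + (\int z - Q_n^1 r)/Q_n^1 r\bigr)\bigr|$ and invokes $\log(1+u)\le u$, whereas you use the mean value theorem on $\log$; your claim that the derivative is bounded by $1/Q_n^1 r$ tacitly assumes $Q_n^1 r \le \exp(\psi)$, but that is exactly the regime in which the paper's $\log(1+u)$ step is clean, so the two arguments are equivalent.
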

    \begin{proof}
        Without loss of generality, we can again assume that $\phi$ is monotonically increasing (otherwise choose $-\phi$). Observe that on $\Omega$, $r(\tilde{x}) > 0$ since the bijection $\phi$ is monotonically increasing. Therefore, by \eqref{eq:Gauss_Chebyshev_quadrature}, we have $Q^1_n r > 0$. Furthermore, as in Proposition~\ref{prp:psi_N_converges}, we have
        \begin{align*}
            \abs{\psi(\theta)- \psi(\theta)^{(n)}} &= \abs{\log \left[ \dfrac{\int_\Omega z(\tilde{x}) d\tilde{x}}{Q^1_n r} \right]} =\abs{\log \left[ \dfrac{\int_\Omega z(\tilde{x}) d\tilde{x} - Q^1_n r}{Q^1_n r} + 1\right]} \\ &\leq \log \left[ \dfrac{E_n r}{Q^1_n r} + 1\right] 
            \leq  \dfrac{E_n r}{Q^1_n r}.
        \end{align*}
        For the error $\abs{\partial_i\psi-\partial_i\psi^{(n)}}$, we have
        \begin{align*}
            \abs{\partial_i\psi(\theta)- \partial_i\psi(\theta)^{(n)}} &= \abs{\partial_i \left[ \log\int_\Omega z(\tilde{x}) d\tilde{x} - \log Q^1_n r \right]} \\
            &= \abs{ \dfrac{\int_\Omega c_i(\phi(\tilde{x})) z(\tilde{x}) d\tilde{x}}{\int_\Omega z(\tilde{x}) d\tilde{x}}  - \dfrac{Q^1_n s_i}{ Q^1_n r} } \\
            &\leq \abs{  \dfrac{E_n s_i}{\int_\Omega z(\tilde{x}) d\tilde{x}} + \dfrac{Q^1_n s_i}{\int_\Omega z(\tilde{x}) d\tilde{x}}  - \dfrac{Q^1_n s_i}{ Q^1_n r} } \\
            &\leq \abs{  \dfrac{E_n s_i}{\int_\Omega z d\tilde{x}} + Q^1_n s_i \left[ \dfrac{Q^1_n r - \int_\Omega z(\tilde{x}) d\tilde{x}  }{Q^1_n r \int_\Omega z d\tilde{x}}  \right]} \\
            &\leq   \dfrac{1}{\exp(\psi)} E_n s_i + \abs{ \dfrac{Q^1_n s_i}{\exp(\psi)Q^1_n r }} E_n r.
        \end{align*}
        Due to the fact that
        \begin{align*}
            \partial_j \partial_i \psi &= \dfrac{\int_\Omega c_i(\phi(\tilde{x})) c_j(\phi(\tilde{x})) z(\tilde{x}) d\tilde{x}}{\exp(\psi)} - \dfrac{\int_\Omega c_j(\phi(\tilde{x})) z(\tilde{x}) d\tilde{x}}{\exp(\psi)^2},
        \end{align*}
        using a similar technique as for the second part above, we get
        \begin{align*}
            \abs{\partial_j\partial_i\psi(\theta)- \partial_j\partial_i\psi(\theta)^{(n)}} &\leq 
            \dfrac{1}{\exp(\psi)} E_n t_{j,i} + \abs{ \dfrac{Q^1_n t_{j,i}}{\exp(\psi)Q^1_n r }} E_n r  \nonumber\\ 
            &+ \dfrac{1}{\exp(\psi)} E_n s_j + \abs{ \dfrac{Q^1_n s_j}{\exp(\psi)Q^1_n r }} E_n r.
        \end{align*}
        \qed
    \end{proof}

    In essence, Theorem \ref{thm:error_estimation} guarantees that the approximation errors of the log-partition function as well as the first and the second derivatives of $\psi$ have upper bounds that are linear with respect to the approximation error of $\int_\Omega z(\tilde{x}) d\tilde{x}$. This is reasonable since the partial derivative with respect to the natural parameter $\theta_i$ is a linear operator. 
    \begin{rem}
        It is possible to have the error bounds above satisfy even stronger decaying inequalities if $r,s_k,t_{l,k}$ are differentiable or twice differentiable on $\bar{\Omega}$, at the cost of imposing stronger conditions on the bijection $\phi$ \citep{Chui1972}.
    \end{rem}

    \section{Calculating log-partition function in multidimensional case}
    \label{sec:log_partition_function_nd}
    In this section, we will introduce a sparse-grid integration method to calculate the log-partition function for multidimensional problems. For detailed exposition of sparse-grid integration methods, we refer the reader to \citet{Gerstner1998}. 

    As we mentioned earlier in Section \ref{sec:log_partition_function_1d}, to compute the log-partition function, we can use polynomial interpolation of order $n$, which will give exact integration results if the integrand is a polynomial of degree $2n-1$.  One of the main issues that arises in the Gauss--Chebyshev quadrature is that the quadrature nodes of the polynomial of order $n-1$, $\Gamma_{n-1}^1$, are not a subset of the quadrature nodes of the polynomials of order $n$, $\Gamma_{n}^1$, that is, the nodes of Gauss--Chebyshev quadrature are not nested. This means that if we need to increase the accuracy of the integration, we cannot reuse $\Gamma_{n-1}^1$ and add the remaining missing nodes, but instead, the whole set of nodes needs to be computed from scratch. In multidimensional quadratures, having a quadrature rule with nested integration nodes is critical for computational efficiency.

    Gauss--Chebyshev quadrature relies on the continuous orthogonality of the Chebyshev polynomials. There is also a discrete orthogonality property of the Chebyshev polynomials \citep[Section 4.6]{Mason2003}. The  quadrature rule that takes benefit of this property is known as the Clenshaw--Curtis quadrature. This quadrature rule has a polynomial degree of exactness of $n-1$ \citep{Gerstner1998} and its nodes are nested. If the inner product weight is set to identity, the quadrature nodes become the zeros of the Legendre polynomials and the weights can be calculated by integrating the associated polynomials. This Gauss--Legendre quadrature is not nested. \citet{Kronrod1966} proposed an extension of $n$-point Gauss quadrature by other $n+1$ points so that the quadrature nodes are nested. This quadrature has the polynomial degree of exactness equal to $3n+1$ \citep{Laurie1997,Gerstner1998}. Solving the quadrature nodes for this scheme is, however, difficult, and \citet{Patterson1968} proposed an iterated version of the Kronrod scheme.

    For the case $d_x>1$, we can use extensions of aforementioned quadratures to $d$-dimensional hypercube integration \citep{Novak1996}, see also \citet{Barthelmann2000,Bungartz2004,Gerstner1998,Judd2011}. This is done via what is known as Smolyak's construction \citep{Smolyak1963}. In Smolyak's approach, it is sufficient to have the means for solving the integration problem for unidimensional problem efficiently, since the algorithm for multidimensional case is fully determined in terms of the algorithm of the unidimensional one \citep{Wasilkowski1995}. Consider a one-dimensional quadrature formula, where the univariate integrand $f$ is interpolated with an $n$ degree polynomial 
    \begin{equation*}
        Q_n^1 f := \sum_i^{N_n^1} w_{i,n} f(x_{i,n}).
    \end{equation*}
    The differences of the quadrature formulas of different degrees are given by
    \begin{equation*}
        \Delta_{n}^1 f := ( Q_n^1 - Q_{n-1}^1) f,
    \end{equation*}
    where we define $Q_0^1 f = 0$. The differences are quadrature formulas defined on the union of the quadrature nodes $\Gamma_n^1 \bigcup \Gamma_{n-1}^1$, where for the case of the nested nodes, $\Gamma_n^1 = \Gamma_n^1 \bigcup \Gamma_{n-1}^1$. The Smolyak's construction for $d$-dimensional functions $f$ for $n\in \mathbb{N}$ and $\mathbf{k}\in \mathbb{N}^d$ is given by \citep{Gerstner1998, Delvos1982}
    \begin{align*}
        Q^d_n f &:= \sum_{\norm{\mathbf{k}}_1 \leq n + d - 1} \left( \Delta_{k_1}^1 \otimes \cdots  \otimes \Delta_{k_d}^1 \right) f.%
    \end{align*}
    Let us denote the number of quadrature points in the unidimensional polynomial interpolation of order $n$ as $N_n^1$. An important feature of Smolyak's construction is that instead of having ${(N_n^1)}^d$ quadrature points as the case with the tensor product of unidimensional grids, it has far fewer quadrature points to be evaluated. If $N_n^1 = \mathcal{O}(2^n)$, under Smolyak's construction, the order of $N_n^d$ is $\mathcal{O}(2^n n^{d-1})$, which is substantially lower than $\mathcal{O}(2^{nd})$ resulting from the product rules \citep{Novak1996,Gerstner1998}.

    As for the error bound, for a function $f \in \mathcal{C}^r$, we have $E_n^1 f = \mathcal{O}((N_n^1)^{-r})$, where $E_n^d f := \abs{\int_\Omega f dx - Q^d_n f}$. This bound holds for all interpolation-based quadrature formulas with positive weights, such as the Clenshaw--Curtis, Gauss--Patterson, and Gauss--Legendre schemes. When we use these quadrature formulas as the unidimensional basis, if $f \in \mathcal{W}^r_d$, where
    \begin{equation*}
        \mathcal{W}^r_d := \left\{ f: \Omega \rightarrow \mathbb{R}: \norm{\dfrac{\partial^{\norm{\mathbf{s}}_1}f}{\partial x_1^{s_1} \ldots \partial x_d^{s_d}}}_{\infty} < \infty, \norm{\mathbf{s}}_1\leq r \right\},
    \end{equation*}
    then we have $E_n^d f = \mathcal{O}(2^{-nr} n^{(d-1)(r+1)})$ \citep{Wasilkowski1995,Gerstner1998}. We may establish the error estimate for the multidimensional case using a similar approach used in the proof of Proposition \ref{prp:ExpectationByExtension}, but with caution, because the equivalent of $z$ \eqref{eq:z_in_tilde} for the multidimensional case may likewise have an unbounded total variation.

    Specifically, for the integration problem \eqref{eq:psi_in_tilde}, the density $p_\theta(x)$ at the points on the boundary of the canonical hypercube is almost zero since $\exp(\sum c_i(x)\theta_i)$ in \eqref{eq:psi} goes to zero as $\norm{x}$ goes to infinity. This implies that the grid points that lie on the boundary of the canonical cube can be neglected in the quadrature calculation. This also reduces the required number of grid points.

\section{Extending the projection filter to $d_x>1$}\label{sec:Extension_to_dx_greater_than_1}
The numerical implementation of the projection filter for the exponential family relies on computing the expected values of the natural statistics and the Fisher matrix. In the previous sections, we have shown how to calculate the log-partition function for both the unidimensional and multidimensional cases. The repetitive uses of numerical integrations for computing the Fisher information matrix and expectations of natural statistics can be avoided by using automatic differentiation and relations \eqref{eq:eta}. However, it still remains to show how to obtain the expectations of the remaining statistics in $\tilde{c}$. 

In unidimensional dynamics, it is well known that the Fisher information matrix and expectations of the natural statistics can be obtained efficiently via a recursion, given that we have calculated the first $m$ of them \citep{Brigo1995}. Unfortunately, this recursion does not hold for multidimensional problems. This can be concluded from Proposition~\ref{prp:Lemma2.3Extension}.

Therefore, in order to solve the projection filtering problems efficiently when $d_x\geq 2$, we show that it is possible to obtain the expectations of the expanded natural statistics $\tilde{c}$ via computing certain partial derivatives. This procedure is particularly efficient by using modern automatic differentiation tools. The main result is given in Proposition~\ref{prp:ExpectationByExtension}. 

Let us set some notations as follows. Let $c : \mathbb{R}^{d_x} \rightarrow \mathbb{R}^m$, where $m$ is the dimension of the parameter space $\Theta$. Define a multi-index $\bunderline{i}\in \mathbb{N}^{d_x}$ and
\begin{equation}
    x^{\bunderline{i}} := x_1^{\bunderline{i}(1)} x_2^{\bunderline{i}(2)} \cdots x_{d_x}^{\bunderline{i}(d_x)}.
\end{equation}
We can identify the elements of $c$ by the multi-index $\bunderline{i}$. As before, let $\eta_{\bunderline{i}} := \E_\theta  [c_{\bunderline{i}}]$. %
For two multi-indexes $\bunderline{i},\bunderline{j} \in \mathbb{N}^{d_x}$, we define an addition operation element-wise. For simplicity, we also write $\bunderline{j}=\bunderline{i}+a$ where $\bunderline{j}(k) =\bunderline{i}(k)+a$, for $k=1,\ldots,d_x$ and $a \in \mathbb{N}$.
Then we have the following result as an analog of Lemma 3.3 from \citet{Brigo1995}.
\begin{prop}\label{prp:Lemma2.3Extension}
    Let a set of exponential parametric densities be given by
    \begin{align*}
        S &= \left\{ p(\cdot, \theta), \theta \in \Theta \right\}, & &p(x,\theta) = \exp(c(x)^\top \theta - \psi),
    \end{align*}
    where $\Theta \subset \mathbb{R}^m$ is open. The log-partition function $\psi$ is infinitely differentiable on $\Theta$. Furthermore, the first, second, and $k$-th  moments of the natural expectations are given by 
    \begin{subequations}
        \begin{align}
            \E_\theta \mqty[c_{\bunderline{i}}] &= \partial_{\bunderline{i}}\psi(\theta) =: \eta_{\bunderline{i}}(\theta), \label{eq:Eta}\\
            \E_\theta \mqty[c_{\bunderline{i}}c_{\bunderline{j}}] &= \partial^2_{\bunderline{i},\bunderline{j}}\psi(\theta) + \eta_{\bunderline{i}}(\theta) \eta_{\bunderline{j}}(\theta),\label{eq:SecondMoment}\\
            \E_\theta \mqty[ c_{\bunderline{i}_1} \cdots c_{\bunderline{i}_k}] &= \exp(-\psi(\theta)) \partial^k_{\bunderline{i}_1\cdots \bunderline{i}_k}\exp(\psi(\theta))\label{eq:HighMoment}.
        \end{align}
    \end{subequations}
    The Fisher information matrix satisfies
    \begin{align}
        g_{\bunderline{i}\bunderline{j}} = \partial^2_{\bunderline{i}\bunderline{j}}\psi(\theta). \label{eq:Fisher}
    \end{align}
    In particular, if the natural statistics are a collection of monomials of $x$,
    \begin{align*}
        c = \mqty[x^{\bunderline{i}_1}, \ldots, x^{\bunderline{i}_m}],
    \end{align*}
    the Fisher information matrix satisfies
    \begin{align}   
        g_{\bunderline{i}\bunderline{j}} = \eta_{\bunderline{i}+\bunderline{j}}(\theta) - \eta_{\bunderline{i}}(\theta)\eta_{\bunderline{j}}(\theta), \label{eq:fisher_C_multi}
    \end{align}
    where $\eta_{\bunderline{i}+\bunderline{j}}(\theta) := \E_\theta[x^{\bunderline{i}+\bunderline{j}}]$ and for any $\bunderline{i}\in \mathbb{N}^{d_x}$
    \begin{align}
        \E_\theta \mqty[x^{\bunderline{i}}] &= (-1)^{d_x} \int_{\mathbb{R}^{d_x}} \dfrac{x^{\bunderline{i}+1}}{\pi(\bunderline{i}+1)} \dfrac{\partial^{d_x} p(x,\theta)}{\partial x_1\cdots \partial x_{d_x}} dx, \label{eq:eta_i_C_multi}
    \end{align}
    where
    \begin{align*}
        \pi(\bunderline{i}) &= \bunderline{i}(1)\times \cdots \times \bunderline{i}({d_x}).\label{eq:expected_x_power_i}
    \end{align*}
\end{prop}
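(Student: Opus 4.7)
The plan is to treat every identity in the proposition as a consequence of repeatedly differentiating the log-partition function under the integral sign, with the monomial-specific formulas \eqref{eq:fisher_C_multi} and \eqref{eq:eta_i_C_multi} then obtained by substitution and by integration by parts respectively. The foundation is the standard exponential-family fact that $\psi(\theta)=\log\int_{\mathcal{X}} \exp(c(x)^\top\theta)\,dx$ is $C^\infty$ on the open set $\Theta$, which I would establish by the same moment-generating-function argument the paper already uses in Lemma~\ref{lem:boundedVariation}: for any $\theta_0\in\Theta$ choose $\epsilon>0$ small enough that the closed ball $\overline{B}_\epsilon(\theta_0)\subset\Theta$; then for every multi-index $\bunderline{i}$ the integrand $|c_{\bunderline{i}_1}(x)\cdots c_{\bunderline{i}_k}(x)|\exp(c(x)^\top(\theta_0+v))$ is dominated, uniformly in $|v|\le\epsilon$, by an $L^1$ function (e.g.\ $\exp(c(x)^\top\theta_0)\cdot \exp(\epsilon\|c(x)\|_1)\cdot(1+\|c(x)\|_1^k)$, which is integrable because $\theta_0\pm\epsilon\mathbf{1}\in\Theta$). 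This licenses differentiation under the integral sign to arbitrary order and is the single analytic ingredient the whole proof needs.

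With differentiation-under-the-integral-sign in hand, the moment identities fall out mechanically. For \eqref{eq:Eta}, direct computation gives $\partial_{\bunderline{i}}\psi=\exp(-\psi)\int c_{\bunderline{i}}\exp(c^\top\theta)\,dx=\E_\theta[c_{\bunderline{i}}]$. Differentiating once more and applying the product rule yields $\partial^2_{\bunderline{i}\bunderline{j}}\psi=\E_\theta[c_{\bunderline{i}}c_{\bunderline{j}}]-\eta_{\bunderline{i}}\eta_{\bunderline{j}}$, which is exactly \eqref{eq:SecondMoment}; the Fisher identity \eqref{eq:Fisher} follows from the standard fact that the Fisher metric of an exponential family coincides with the covariance of the sufficient statistics, which is what this last display says. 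For the $k$-th moment identity \eqref{eq:HighMoment} it is cleaner to differentiate $\exp(\psi)=\int\exp(c^\top\theta)\,dx$ directly: each application of $\partial_{\bunderline{i}_r}$ pulls a factor $c_{\bunderline{i}_r}(x)$ out of the integrand, so after $k$ applications one has $\partial^k_{\bunderline{i}_1\cdots\bunderline{i}_k}\exp(\psi)=\int c_{\bunderline{i}_1}\cdots c_{\bunderline{i}_k}\exp(c^\top\theta)\,dx=\exp(\psi)\E_\theta[c_{\bunderline{i}_1}\cdots c_{\bunderline{i}_k}]$, and multiplying by $\exp(-\psi)$ gives the claim.

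The monomial specialization \eqref{eq:fisher_C_multi} is then immediate: when $c_{\bunderline{i}}(x)=x^{\bunderline{i}}$ we have $c_{\bunderline{i}}c_{\bunderline{j}}=x^{\bunderline{i}+\bunderline{j}}$, so \eqref{eq:Fisher} together with \eqref{eq:SecondMoment} reads $g_{\bunderline{i}\bunderline{j}}=\eta_{\bunderline{i}+\bunderline{j}}-\eta_{\bunderline{i}}\eta_{\bunderline{j}}$ as stated. For the final identity \eqref{eq:eta_i_C_multi} the plan is to start from the right-hand side and integrate by parts $d_x$ times, once per coordinate, taking $u_k=x_k^{\bunderline{i}(k)+1}/(\bunderline{i}(k)+1)$ and $dv_k=\partial_{x_k}$ of whatever mixed derivative of $p$ remains. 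Each step produces a minus sign (these multiply to $(-1)^{d_x}$ which cancels the prefactor) and a factor $\bunderline{i}(k)+1$ (which cancels the corresponding factor in $\pi(\bunderline{i}+1)$), so after all $d_x$ iterations the integrand collapses to $x^{\bunderline{i}}p(x,\theta)$ and the result reads $\E_\theta[x^{\bunderline{i}}]$. The one delicate point — and really the only place in the argument that is not just bookkeeping — is that every intermediate boundary term at $x_k=\pm\infty$ must vanish; this is where the tail behavior of $p(\cdot,\theta)$ enters. Since $\theta\in\Theta$ and the natural statistics are monomials, the leading-order monomials in $c^\top\theta$ are of even degree with negative coefficient (otherwise $\psi$ would be infinite), so $p$ and each of its mixed partials decay faster than any polynomial, which suffices. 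I would make this tail estimate precise with the same MGF-based bound used at the start, and that closes the proof.
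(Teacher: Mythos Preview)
Your proposal is correct and follows essentially the same route as the paper: the moment identities \eqref{eq:Eta}--\eqref{eq:HighMoment} and the Fisher identity \eqref{eq:Fisher} are obtained by differentiating under the integral sign (the paper simply cites Brigo and Amari for this, while you spell out the MGF-domination argument), the monomial Fisher formula \eqref{eq:fisher_C_multi} follows by substituting $c_{\bunderline{i}}c_{\bunderline{j}}=x^{\bunderline{i}+\bunderline{j}}$ into \eqref{eq:SecondMoment}, and \eqref{eq:eta_i_C_multi} is proved by iterated integration by parts in each coordinate with the boundary terms killed by the super-polynomial decay of $p(\cdot,\theta)$. The only cosmetic difference is that the paper integrates by parts starting from the left-hand side $\E_\theta[x^{\bunderline{i}}]$ and working toward the right, whereas you start from the right and collapse it to the left---these are trivially equivalent.
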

\begin{proof}
    All results except \eqref{eq:fisher_C_multi} and \eqref{eq:eta_i_C_multi} follow directly from \citet[Lemma 2.3]{Brigo1995} and \citet{Amari1985} using multi-index notations. 
    By definition, $\E_\theta[c_{\bunderline{i}}c_{\bunderline{j}}] = \eta_{\bunderline{i}+\bunderline{j}}(\theta)$, hence using \eqref{eq:SecondMoment}, \eqref{eq:Eta}, and \eqref{eq:Fisher}, we obtain \eqref{eq:fisher_C_multi}.
    For \eqref{eq:eta_i_C_multi}, observe that
    \begin{align*}
        \E_\theta \mqty[x^{\bunderline{i}}] &= \idotsint_{\mathbb{R}^{d_x}} x_1^{\bunderline{i}(1)} x_2^{\bunderline{i}(2)} \cdots x_d^{\bunderline{i}(d_x)} p(x,\theta) dx_1\cdots dx_{d_x} \\
        &= \idotsint_{\mathbb{R}^{d_x-1}} x_1^{\bunderline{i}(1)} x_2^{\bunderline{i}(2)}\cdots x_{d_x-1}^{\bunderline{i}(d_x-1)} \left[\int_{-\infty}^\infty x_{d_x}^{\bunderline{i}(d_x)} p(x,\theta) dx_{d_x}\right] dx_1\cdots dx_{d_x-1} \\
        &= \idotsint_{\mathbb{R}^{d_x-1}} x_1^{\bunderline{i}(1)} x_2^{\bunderline{i}(2)} \cdots x_{d_x-1}^{\bunderline{i}(d_x-1)} \\
        & \times \left[ \left. \dfrac{x_{d_x}^{\bunderline{i}(d_x)+1}}{\bunderline{i}(d_x)+1} p(x,\theta) \right|^\infty_{-\infty} - 
            \int_{-\infty}^\infty \dfrac{x_{d_x}^{\bunderline{i}(d_x)+1}}{\bunderline{i}(d_x)+1} \pdv{p(x,\theta)}{x_{d_x}}  dx_{d_x}\right] dx_1\cdots dx_{d_x-1}.
    \end{align*}
    Doing a similar integration for the remaining integration variables $x_1,\ldots,x_{d_x-1}$ and since $p(x,\theta)$ vanishes as $\norm{x}\rightarrow \infty$ faster than any polynomial, we end up with \eqref{eq:eta_i_C_multi}. \qed
\end{proof}

In unidimensional problems \eqref{eq:eta_i_C_multi} admits a recursion, where we can express $\eta_{m+i}$ as a linear combination of $\eta_i,\ldots,\eta_{m+i-1}$ \citep{Brigo1995}. However, the recursion does not hold for $d_x>1$. This can be verified by evaluating \eqref{eq:expected_x_power_i} for the case $d_x=2$ and natural statistics $\left\{ x^{\bunderline{i}} \right\}$, where $\bunderline{i} \in \left\{ (0,1),(0,2),(1,0),(1,1),(2,0) \right\}$. Instead of relying on a recursion to compute the higher moments, we can use a simple procedure which is given in the following proposition.
\begin{prop}\label{prp:ExpectationByExtension}
    Let $s(x): \mathbb{R}^d \rightarrow \mathbb{R}$ be another statistic, linearly independent of the natural statistics $c(x)$ with the corresponding natural parameters $\theta$ (see Proposition \ref{prp:Lemma2.3Extension}). If there exists an open neighborhood of zero $\Theta_0$, such that $\tilde{\psi}(\tilde{\theta}) := \log(\int_\mathcal{X} \exp(\tilde{c}^\top \tilde{\theta}) dx)< \infty$ for any $\tilde{\theta} \in \Theta \times \Theta_0$ where $\tilde{c} = [c^\top(x) \; s(x)]^\top$, then the expectation of $s(x)$ under $p_\theta$ is given by
    \begin{align*}
        \mathbb{E}_\theta [s(x)] &= \left.\pdv{\tilde{\psi}(\tilde{\theta})}{\tilde{\theta}_{m+1}}\right|_{\tilde{\theta} = \theta_\ast}, & \theta_\ast = \mqty[\theta^\top & 0]^\top.
    \end{align*}
\end{prop}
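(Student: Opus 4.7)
The plan is to show that differentiating the augmented log-partition function $\tilde{\psi}$ with respect to the auxiliary parameter $\tilde{\theta}_{m+1}$ and then evaluating at zero recovers the desired expectation. This is essentially the standard moment-generating property of exponential families, now applied to the \emph{extended} family EM$(\tilde c)$, with the subtlety that we need the derivative to be well-defined on an open set containing $\theta_\ast$ so that the evaluation at $\tilde{\theta}_{m+1}=0$ makes sense.

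First I would write out $\tilde{\psi}(\tilde{\theta}) = \log \int_{\mathcal X} \exp\bigl(c(x)^\top \theta + s(x)\,\tilde{\theta}_{m+1}\bigr) dx$ explicitly and formally differentiate with respect to $\tilde{\theta}_{m+1}$, obtaining
\begin{equation*}
\frac{\partial \tilde{\psi}}{\partial \tilde{\theta}_{m+1}}(\tilde{\theta}) = \frac{\int_{\mathcal X} s(x)\exp\bigl(\tilde c(x)^\top \tilde\theta\bigr) dx}{\int_{\mathcal X} \exp\bigl(\tilde c(x)^\top \tilde\theta\bigr) dx}.
\end{equation*}
Setting $\tilde\theta = \theta_\ast = [\theta^\top\; 0]^\top$ collapses the exponent in both numerator and denominator to $c(x)^\top \theta$, so that the right-hand side becomes $\int s(x)\,p_\theta(x)\,dx = \mathbb{E}_\theta[s(x)]$, which is the claimed identity.

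The only non-trivial part is justifying the interchange of differentiation and integration on the numerator side. Here I would invoke the hypothesis that $\tilde{\psi}(\tilde\theta) < \infty$ on the open set $\Theta \times \Theta_0$, which is precisely the standard regularity condition ensuring that an exponential family is a \emph{regular} exponential family on a neighborhood of $\theta_\ast$. Concretely, pick $\varepsilon > 0$ so that $[-\varepsilon,\varepsilon] \subset \Theta_0$. For $|\tilde{\theta}_{m+1}| < \varepsilon$ one has $|s(x)|\exp(\tilde c^\top \tilde\theta) \le \varepsilon^{-1}\bigl(\exp(c^\top\theta + \varepsilon s(x)) + \exp(c^\top\theta - \varepsilon s(x))\bigr)$, using $|s|\le \varepsilon^{-1}\cosh(\varepsilon s)/\sinh'(0) \cdot C$ in a standard way, and both bounding integrals are finite by the neighborhood assumption. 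This dominated-convergence argument lets us pass $\partial/\partial \tilde{\theta}_{m+1}$ inside the integral, and the same appeal to the finiteness of $\tilde\psi$ at $\theta_\ast$ ensures the denominator is positive and finite, so the quotient is well-defined. This application of the neighborhood hypothesis is the only real technical step; the rest is a direct computation. Infinite differentiability of $\tilde\psi$ on $\Theta \times \Theta_0$ (cf.\ Proposition~\ref{prp:Lemma2.3Extension}) could also be cited to skip the domination argument entirely, since it already provides the required smoothness.
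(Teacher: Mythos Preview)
Your proof is correct and follows essentially the same approach as the paper: both recognize $\theta_\ast$ as a point in the extended natural parameter space $\Theta\times\Theta_0$ and apply the standard identity $\partial_i\tilde\psi=\mathbb{E}_{\tilde\theta}[\tilde c_i]$ for exponential families there. The paper simply invokes Proposition~\ref{prp:Lemma2.3Extension} (which already records the infinite differentiability of the log-partition function and the moment identity) in place of your explicit dominated-convergence justification---a shortcut you yourself note at the end.
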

\begin{proof}
    Define a density as $\bar{p}(x,\tilde{\theta}) := \exp(\tilde{c}^\top \tilde{\theta}- \tilde{\psi}(\tilde{\theta}))$, where $\bar{p}(x,\theta_\ast) = p(x,\theta)$. Then, by Proposition \ref{prp:Lemma2.3Extension}:
    \begin{align*}
        \E_\theta [s(x)] &= \int_{\mathcal{X}} p(x,\theta)s(x) dx = \int_{\mathcal{X}} \bar{p}(x,\theta_\ast)s(x) dx =  \left.\pdv{\tilde{\psi}(\tilde{\theta})}{\tilde{\theta}_{m+1}}\right|_{\tilde{\theta}= \theta_\ast}.
    \end{align*}
    \qed
\end{proof}
The above proposition shows that by using an expanded exponential family, the higher moments can be computed via partial differentiation. This removes the difficulty of obtaining the higher moments which previously could only be cheaply obtained for unidimensional problems via recursion \citep{Brigo1995}. The linear independence condition in this assumption is required since otherwise, the expectation of the statistic $s(x)$ can be obtained by a linear combination of the expectations of the elements of $c(x)$.

\section{Numerical experiments}
\label{sec:NumericalExamples}
\begin{figure*}[!h]
\centering
\subfloat[density evolution $\phi=\text{arctanh}$]{\label{fig:arctanh_wireframe}\includegraphics[trim={3.5cm 1.cm 3.5cm 1.cm},clip,width=0.5\linewidth]{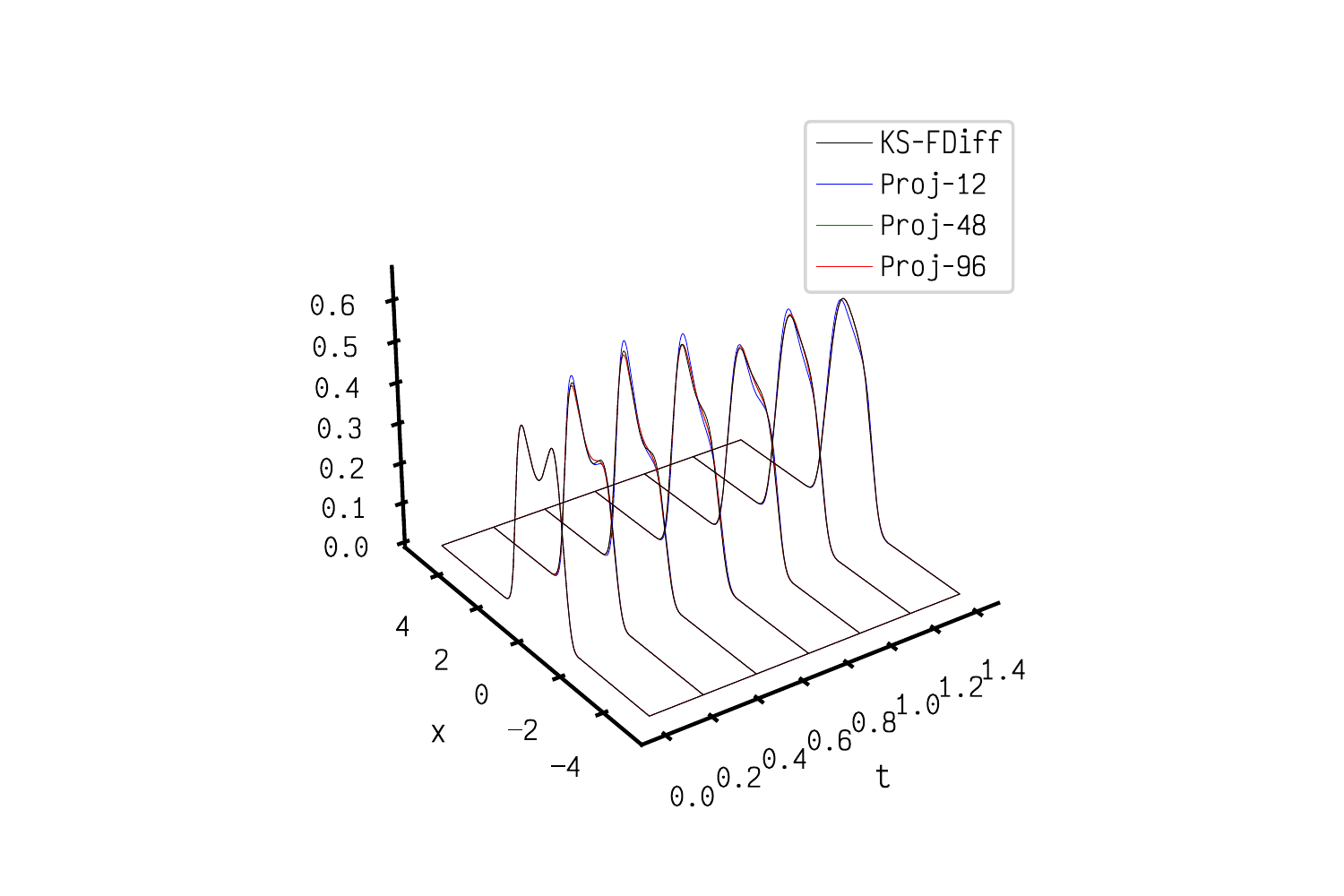}}
\subfloat[density evolution $\phi=\frac{\tilde{x}}{1-\tilde{x}^2}$]{\label{fig:boyd_wireframe}\includegraphics[trim={3.5cm 1.cm 3.5cm 1.cm},clip,width=0.5\linewidth]{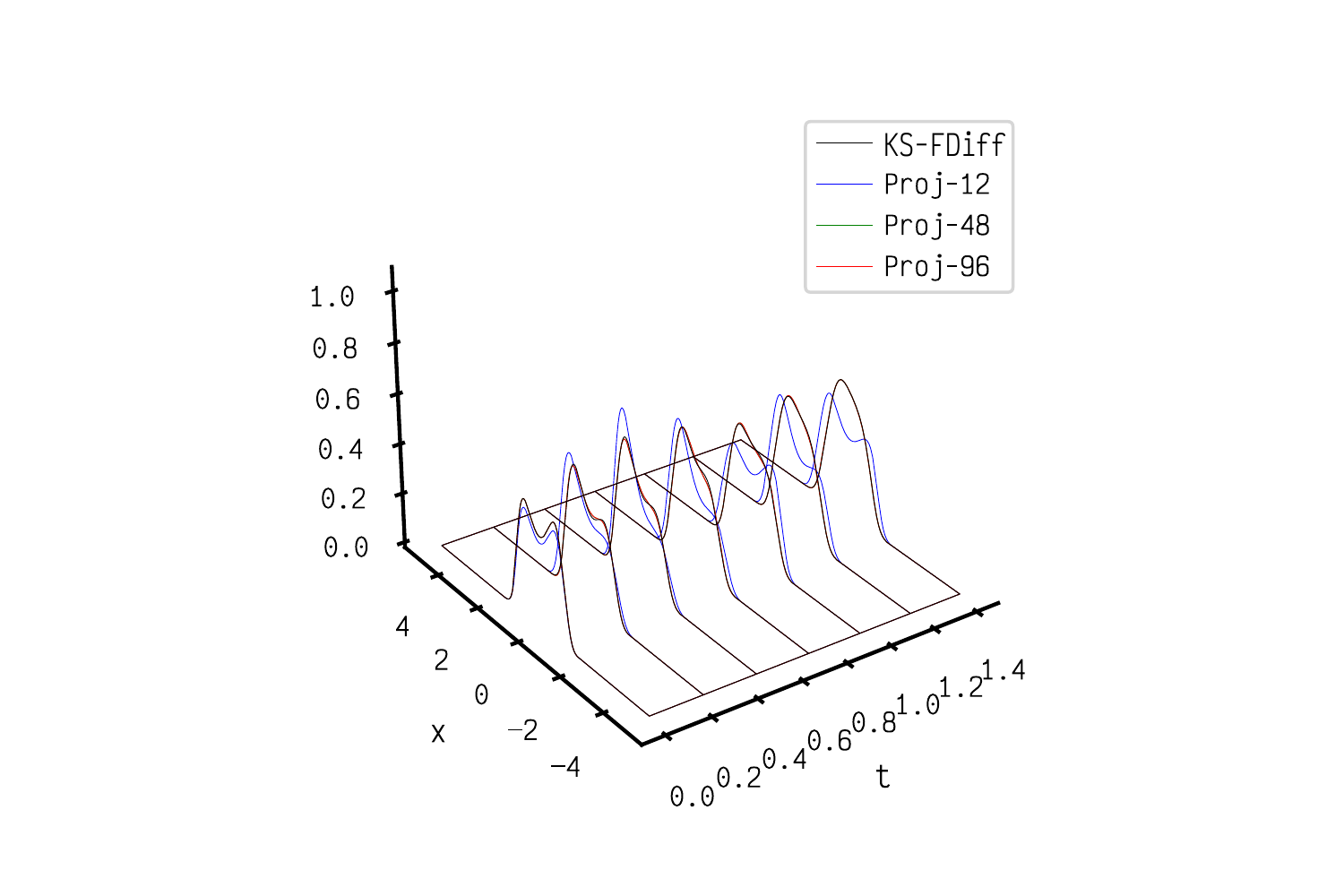}}\\
\subfloat[Hellinger distance $\phi=\text{arctanh}$]{\label{fig:arctanh_hellinger}\includegraphics[trim={0.2cm 0.0cm 1.5cm 1.0cm},clip,width=0.5\linewidth]{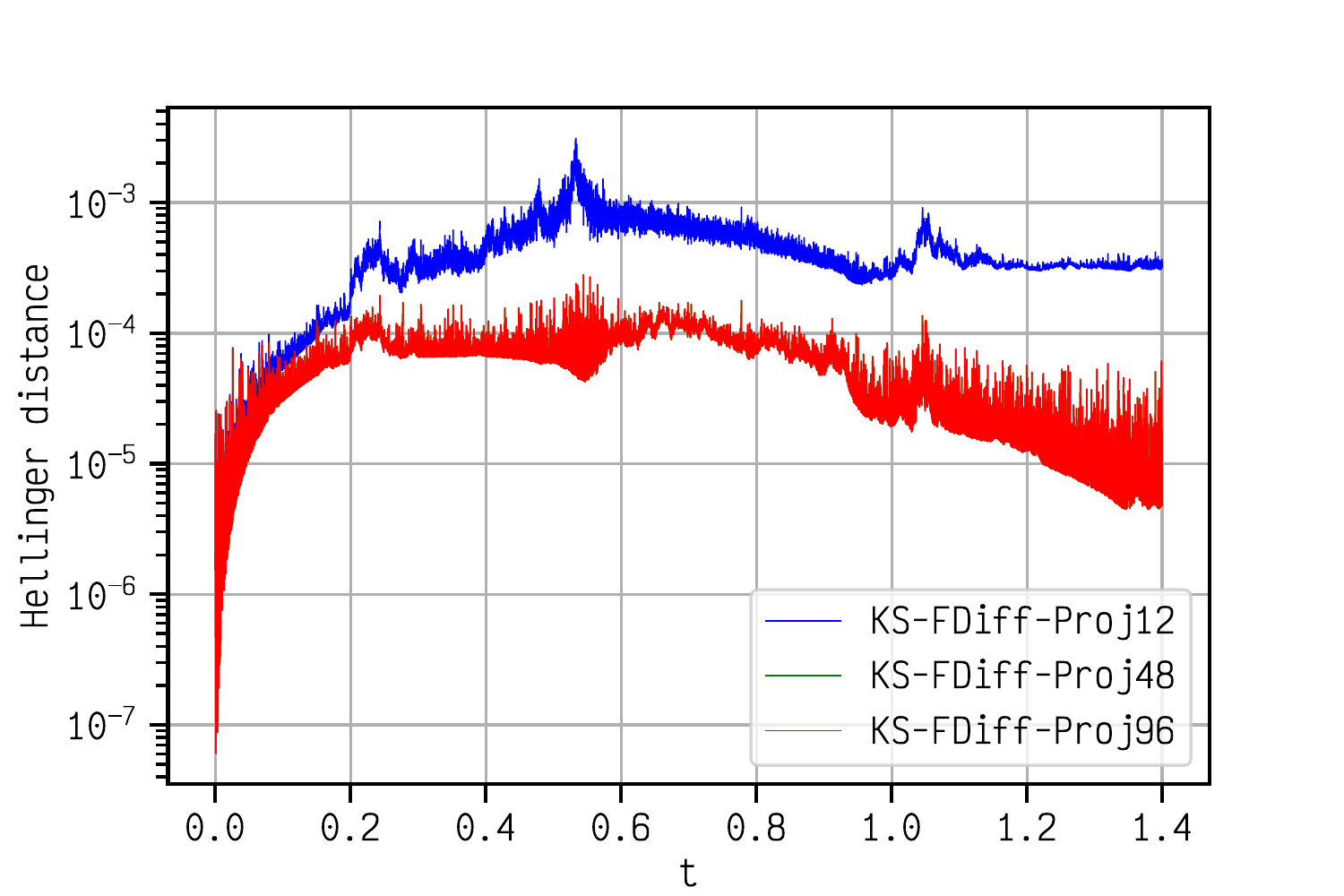}}
\subfloat[Hellinger distance $\phi=\frac{\tilde{x}}{1-\tilde{x}^2}$]{\label{fig:boyd_hellinger}\includegraphics[trim={0.2cm 0.0cm 1.5cm 1.0cm},clip,width=0.5\linewidth]{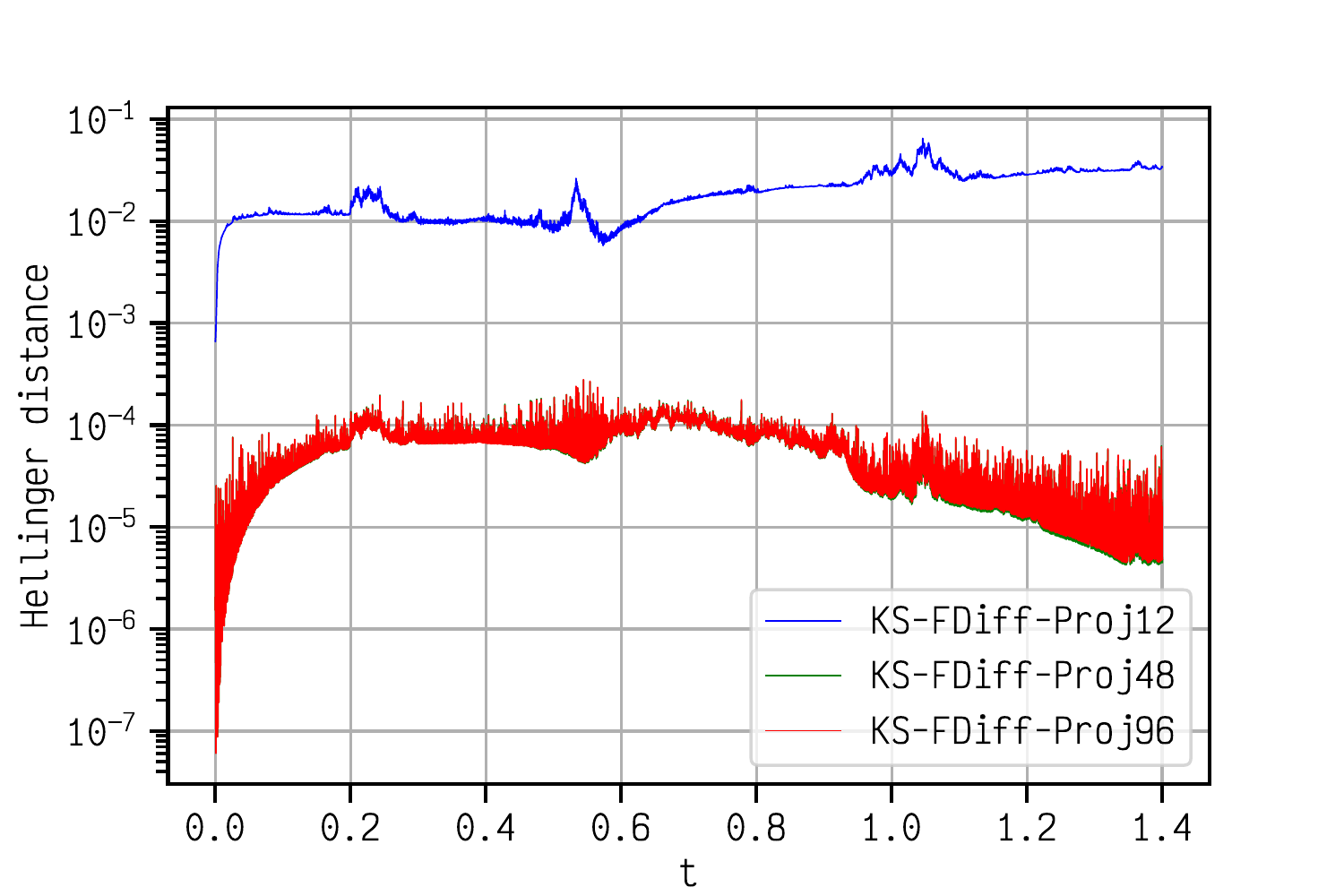}}\\

\caption{Figures \ref{fig:arctanh_wireframe} and \ref{fig:arctanh_wireframe} show the evolution of densities solved by approximating the Kushner--Stratonovich equation via a finite difference scheme (KS-FDiff), and the projection filter using different bijections and numbers of Chebyshev nodes (blue, green and red for 12, 48, and 96 quadrature nodes respectively), while Figures \ref{fig:arctanh_hellinger} and \ref{fig:boyd_hellinger} show their respective Hellinger distances. It can be seen that the Hellinger distances of the projection filter and the solution of the finite difference scheme using 48 and 96 quadratures are practically indistinguishable.}

\label{fig:comparison_densities}
\end{figure*}

In this section, we demonstrate the application of the proposed implementation of the projection filter to selected optimal filtering problems. We compare the projection filter densities against the densities obtained by solving the Kushner--Stratonovich equation via the central finite difference approach \citep{Bain2009} and kernel density estimate from a bootstrap particle filter with systematic resampling \citep{Chopin2020}. All numerical simulations in this section are implemented using JAX \citep{jax2018github} where the automatic differentiation is supported by default. The sparse-grid implementation is taken from the TASMANIAN library \citep{stoyanov2015tasmanian}. 

For the unidimensional problem below, we use a simple central finite-difference scheme to approximate the solution of the Kushner--Stratonovich equation. This numerical scheme is stable enough for the simulation period that we are interested in. For the nonlinear two-dimensional problem, we use the Crank--Nicolson scheme since for this problem the central finite-difference scheme is unstable in the grid space and simulation time that we are interested in. We set the ratio $\Delta t/(\Delta x^2)$ to be significantly below $0.5$ according to the von Neumann stability analysis \citep{Smith1985}.

\subsection{Unidimensional example}
The first example is the following scalar system dynamic with a nonlinear measurement model:
\begin{subequations}
    \begin{align}
        dx_t &=  \sigma dW_t,\\
        dy_t &=  \beta x^3_t dt +  dV_t,
        \end{align}            
\end{subequations}
with independent standard Brownian motions $\{ W_t , t \geq 0\}$  and  $\{ V_t , t \geq 0\}$, and where $\sigma=0.4, \beta = 0.8$ are constants. We generate one measurement trajectory realization from the model with $x_0 = 1$. The simulation time step is set to be $10^{-4}$. 

We use the exponential manifold with $c_i \in \{x, x^2, x^3, x^4\}$ and choose the initial condition to be $\theta_0 = [0,1,0,-1]$. We assume that this initial condition is exactly the initial density of the dynamical system which corresponds to a double-mode non-Gaussian density with peaks at $-1$ and $1$. We compare two choices of the bijections, the first is $\text{arctanh}(\tilde{x})$ and the second is $\frac{\tilde{x}}{1-\tilde{x}^2}$. Furthermore, in the Chebyshev quadrature \eqref{eq:Gauss_Chebyshev_quadrature} we compare three choices for the number of Chebyshev nodes, where we set them to be $\Chebyshevbasislow$, $\Chebyshevbasismed$, and $\Chebyshevbasishigh$. We also solve the Kushner--Stratonovich SPDE with finite differences on an equidistant grid $(-5,5)$ with one thousand points. 

The evolution of the densities provided by the Kushner--Stratonovich equation approach and projection filter using the different settings can be seen in Figures \ref{fig:arctanh_wireframe} and \ref{fig:boyd_wireframe}. The Hellinger distances between the projection filter densities and the numerical solutions of the Kushner--Stratonovich equation can be seen in Figures \ref{fig:arctanh_hellinger} and \ref{fig:boyd_hellinger}. These figures show that for this particular filtering problem when using a proper selection of a bijection function, the projection filter solution using a few nodes is very close to the one using a very high number of nodes. In particular, when using $\text{arctanh}$ as the bijection, with the node number as low as $\Chebyshevbasislow$, the Hellinger distance of solution of the projection filter to the Kushner--Stratonovich solution until $t=1.4$ is below $10^{-3}$. This is not the case when we use $\frac{\tilde{x}}{1-\tilde{x}^2}$, as its Hellinger distance is significantly larger with lower than with higher number of nodes. The interpolation using the latter bijection with $\Chebyshevbasislow$ nodes is less accurate since the initial condition as can be seen in the Figure \ref{fig:boyd_wireframe}. This inaccuracy leads to a false double-peaked density that appears from time $1.0$

We can also see that the projection filter solution for the case of $\Chebyshevbasismed$ nodes is almost indistinguishable from that of $\Chebyshevbasishigh$, regardless of the choice of the bijection, with the Hellinger distance to the Kushner--Stratonovich solution being kept below $10^{-4}$ for most of the time until $1.4$. This shows one numerical benefit of this approach for this particular example: the filter equation can be propagated  efficiently using a very low number of integration nodes.

\subsection{Two-dimensional example}
Here, we show that the projection filter solved using the proposed method can perform well in many interesting problems. In these experiments we use the Gauss--Patterson scheme with the canonical domain $(-1,1)^{\otimes d_x}$.

\subsubsection{Comparison to Kalman--Bucy filter}
In this experiment, we consider the following stochastic estimation problem on a two-dimensional linear system:
\begin{subequations}
    \begin{align}
        d\mqty[x_{1,t}\\x_{2,t}] &= -\mqty[x_{1,t}\\x_{2,t}] dt + \sigma_w \mqty[dW_{1,t}\\dW_{2,t}],\\
        d\mqty[y_{1,t}\\y_{2,t}] &= -\mqty[x_{1,t}\\x_{2,t}] dt + \sigma_v \mqty[dV_{1,t}\\dV_{2,t}].
    \end{align}\label{eqs:SDE_all_n_d_linear}    
\end{subequations}

We set $dt=10^{-3}$, $\sigma_v=10^{-1}$, and $\sigma_w=1$a, and simulate the SDE with $n_t=1000$ Euler--Maruyama steps and record the measurement trajectory. To obtain a reference result, we implement the Kalman--Bucy filter using the Euler scheme. For the projection filter, we compare the two integration procedures:  the sparse-grid methods and the quasi Monte Carlo (qMC) \citep{Leobacher_2014}. We choose the $\text{arctanh}$ function as the bijection function based on our previous examination on the unidimensional problem. We use the Gauss--Patterson scheme for unidimensional sparse grid where we compare various integration levels. For the qMC, we choose the Halton low discrepancy sequence. The number of quadrature points for qMC is chosen to be equal to the selected sparse grid quadrature points.
We use the Gaussian family in our projection filter. This means that we set the natural statistics to be $\left\{ x^{\bunderline{i}} \right\}$, where $\bunderline{i} \in \left\{ (0,1),(0,2),(1,0),(1,1),(2,0) \right\}$. 

Figures \ref{fig:Hellinger_distance_Kalman_1} and \ref{fig:Hellinger_distance_Kalman_2} illustrate the Hellinger distances between the approximated solution of the projection filter and the solution of the Kalman--Bucy filter, whereas Figures \ref{fig:Kalman_Bucy_realizations_x_1} and \ref{fig:Kalman_Bucy_realizations_x_2} show the estimation outcomes for both states. Using 49 quadrature points, which is comparable to level 3 in sparse-grid integration, both integration methods provide estimation results that are quite similar. The sparse-grid based projection filter approximates the standard Kalman--Bucy filter solution more accurately than qMC. As more quadrature nodes are added, the Hellinger distances rapidly decrease, as can be seen in Figures \ref{fig:Hellinger_distance_Kalman_1} and \ref{fig:Hellinger_distance_Kalman_2}. With the sparse-grid level set to 5, the sparse-grid based projection filter generates a practically negligible Hellinger distance to the Kalman-Bucy solution after  $t=0.4$. In contrast, the Hellinger distances of the densities determined from qMC integration continue to oscillate around $10^{-4}$.

\newcommand{\lefttrim}{0.0}
\newcommand{\righttrim}{1.5}
\newcommand{\verticaltrim}{0.0}

\begin{figure}[!h]
\centering
\subfloat[SPG-level 3]{\label{fig:Hell_pf_vs_kf_spg_49}\includegraphics[trim={\lefttrim cm \verticaltrim cm \righttrim cm \verticaltrim cm},clip,width=0.5\linewidth]{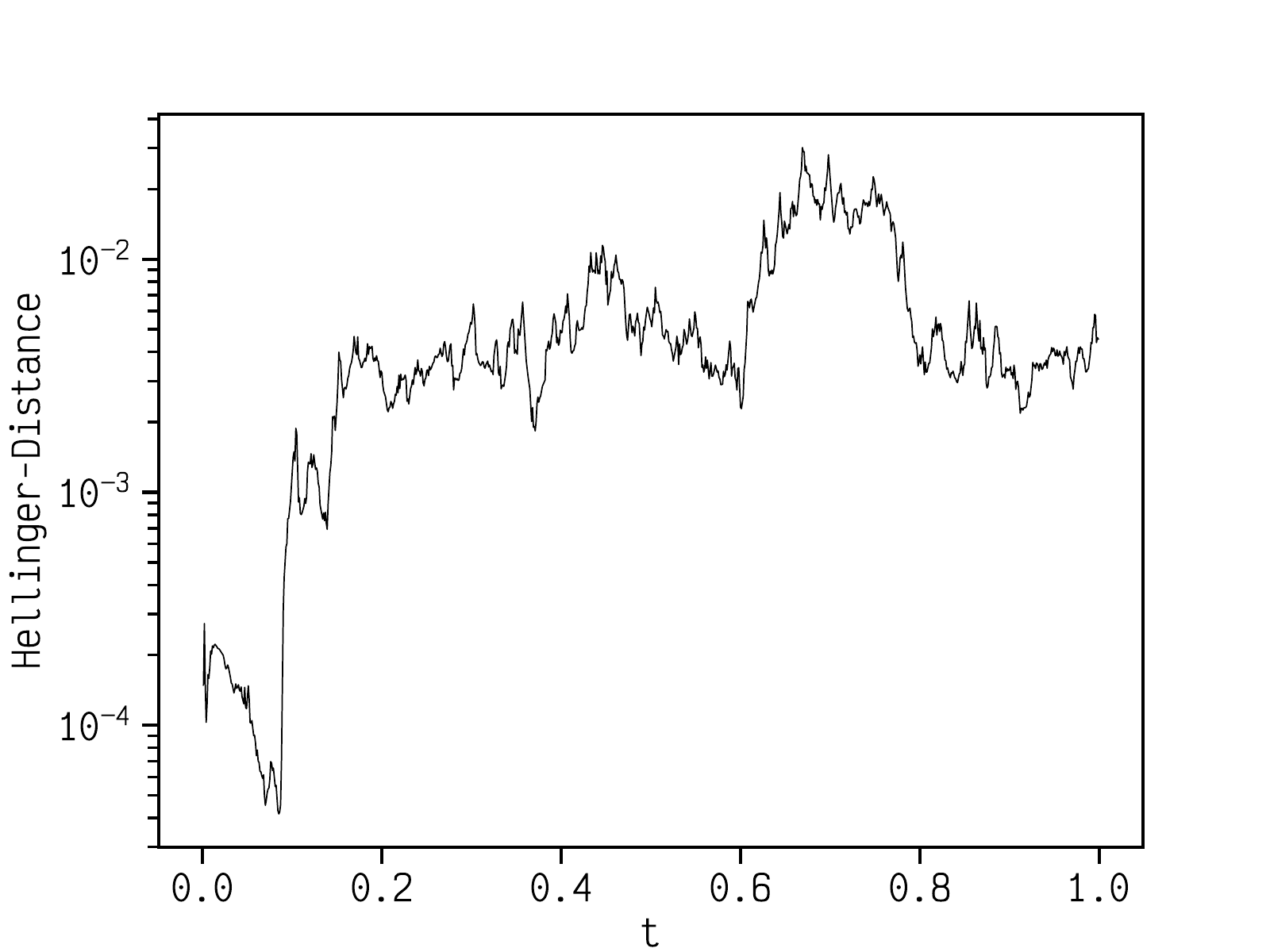}}
\subfloat[qMC-49]{\label{fig:Hell_pf_vs_kf_qmc_49}\includegraphics[trim={\lefttrim cm \verticaltrim cm \righttrim cm \verticaltrim cm},clip,width=0.5\linewidth]{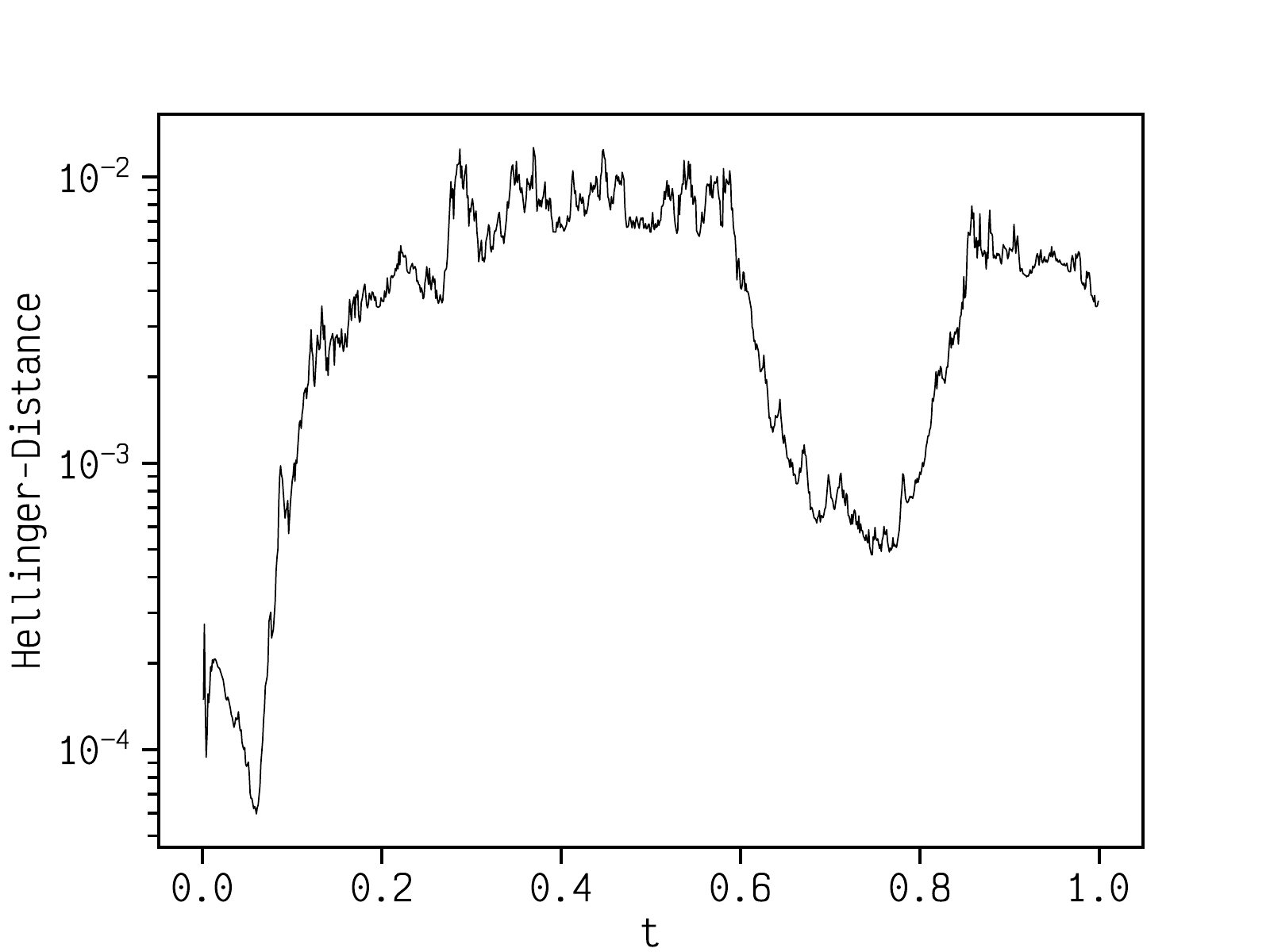}}\\
\subfloat[SPG-level 4]{\label{fig:Hell_pf_vs_kf_spg_129}\includegraphics[trim={\lefttrim cm \verticaltrim cm \righttrim cm \verticaltrim cm},clip,width=0.5\linewidth]{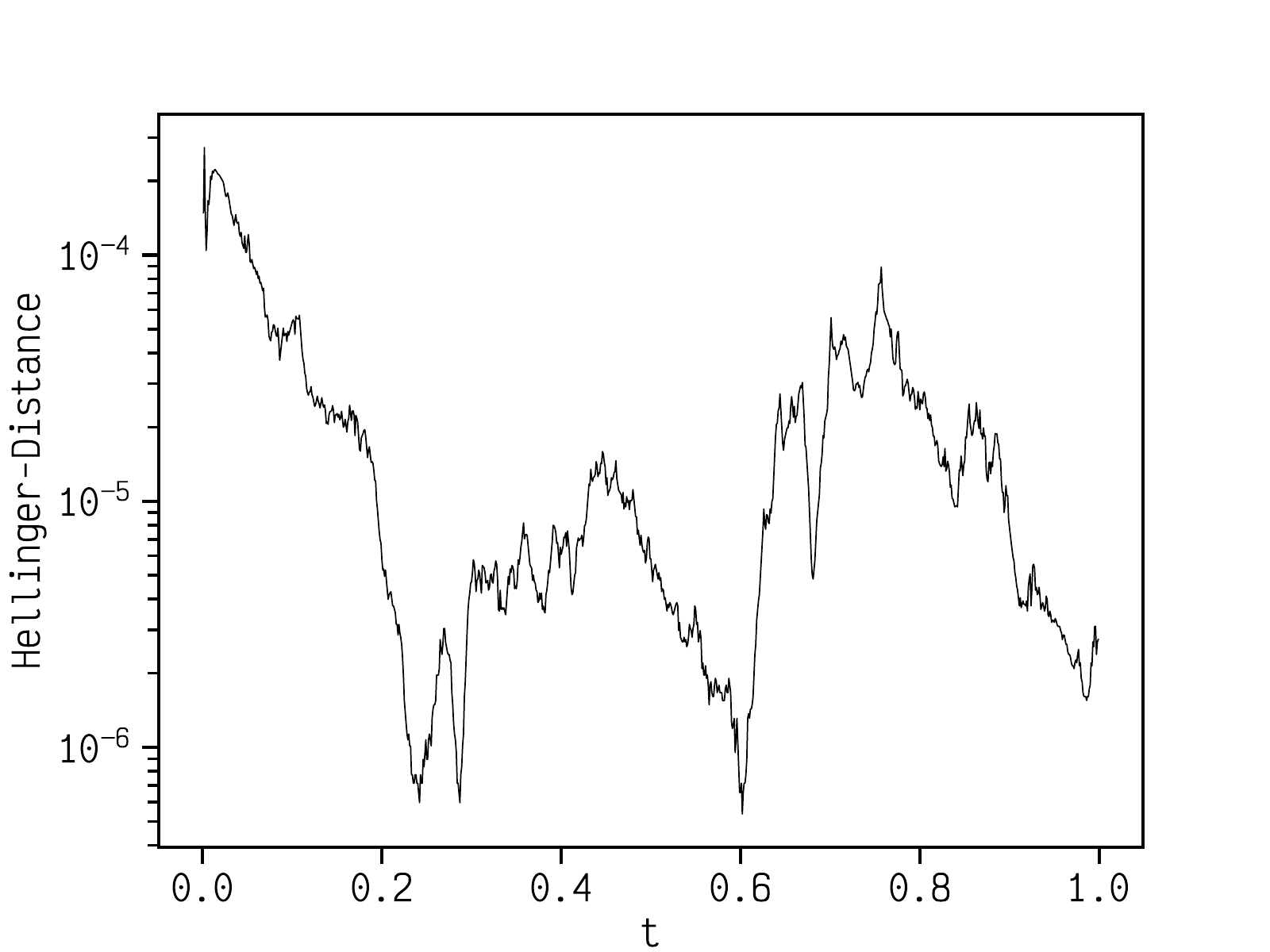}}
\subfloat[qMC-129]{\label{fig:Hell_pf_vs_kf_qmc_129}\includegraphics[trim={\lefttrim cm \verticaltrim cm \righttrim cm \verticaltrim cm},clip,width=0.5\linewidth]{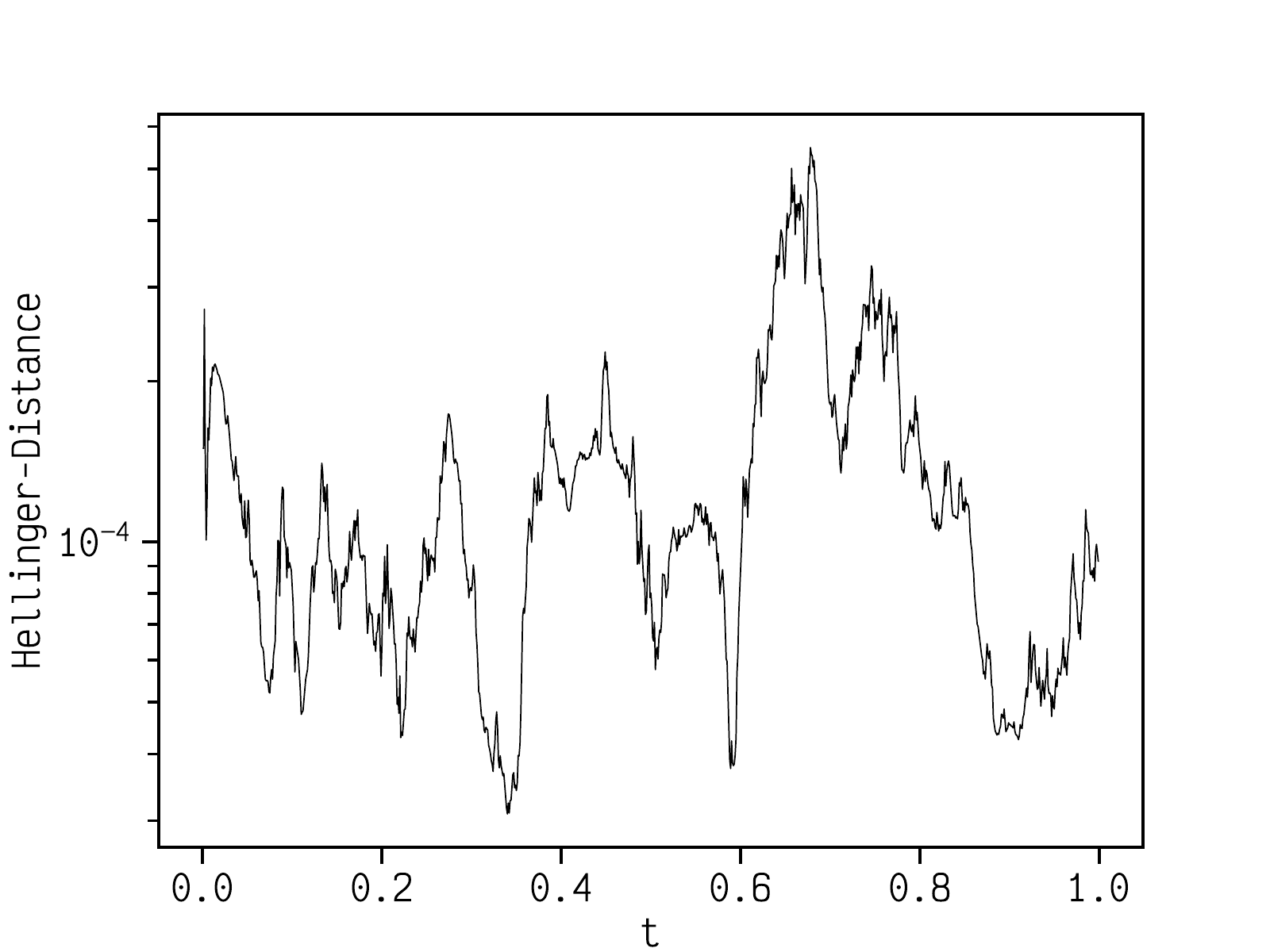}}

\caption{The Hellinger distances between the densities obtained by solving the Kalman--Bucy equation and those obtained by projection filter, solved with different integrators. The number of points in the quasi Monte Carlo integrators are set to be equal to the respective number of sparse grid points. \label{fig:Hellinger_distance_Kalman_1}}
\end{figure}

\begin{figure}[!h]
\centering
\subfloat[SPG-level 5]{\label{fig:Hell_pf_vs_kf_spg_321}\includegraphics[trim={\lefttrim cm \verticaltrim cm \righttrim cm \verticaltrim cm},clip,width=0.5\linewidth]{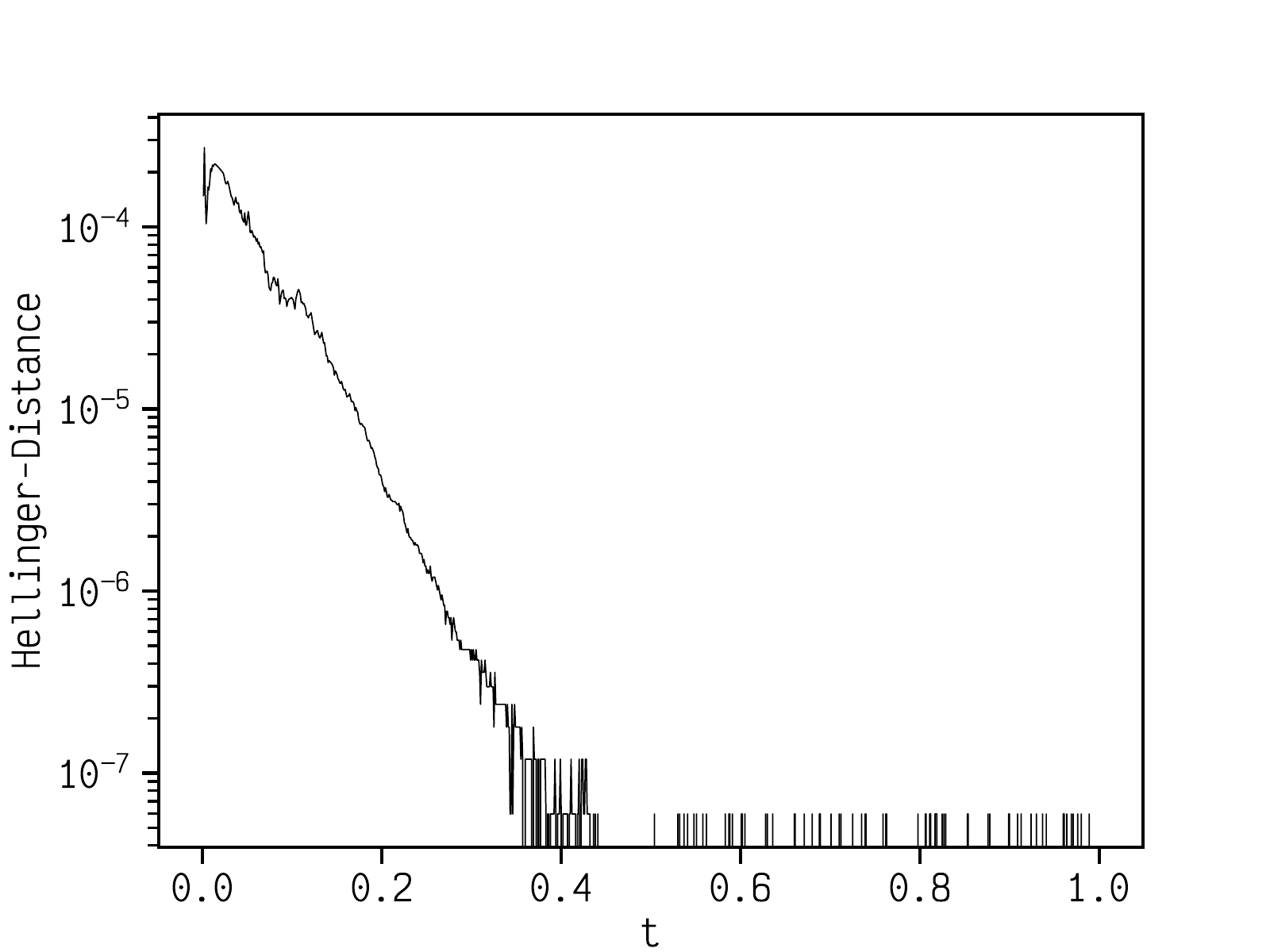}}
\subfloat[qMC-321]{\label{fig:Hell_pf_vs_kf_qmc_321}\includegraphics[trim={\lefttrim cm \verticaltrim cm \righttrim cm \verticaltrim cm},clip,width=0.5\linewidth]{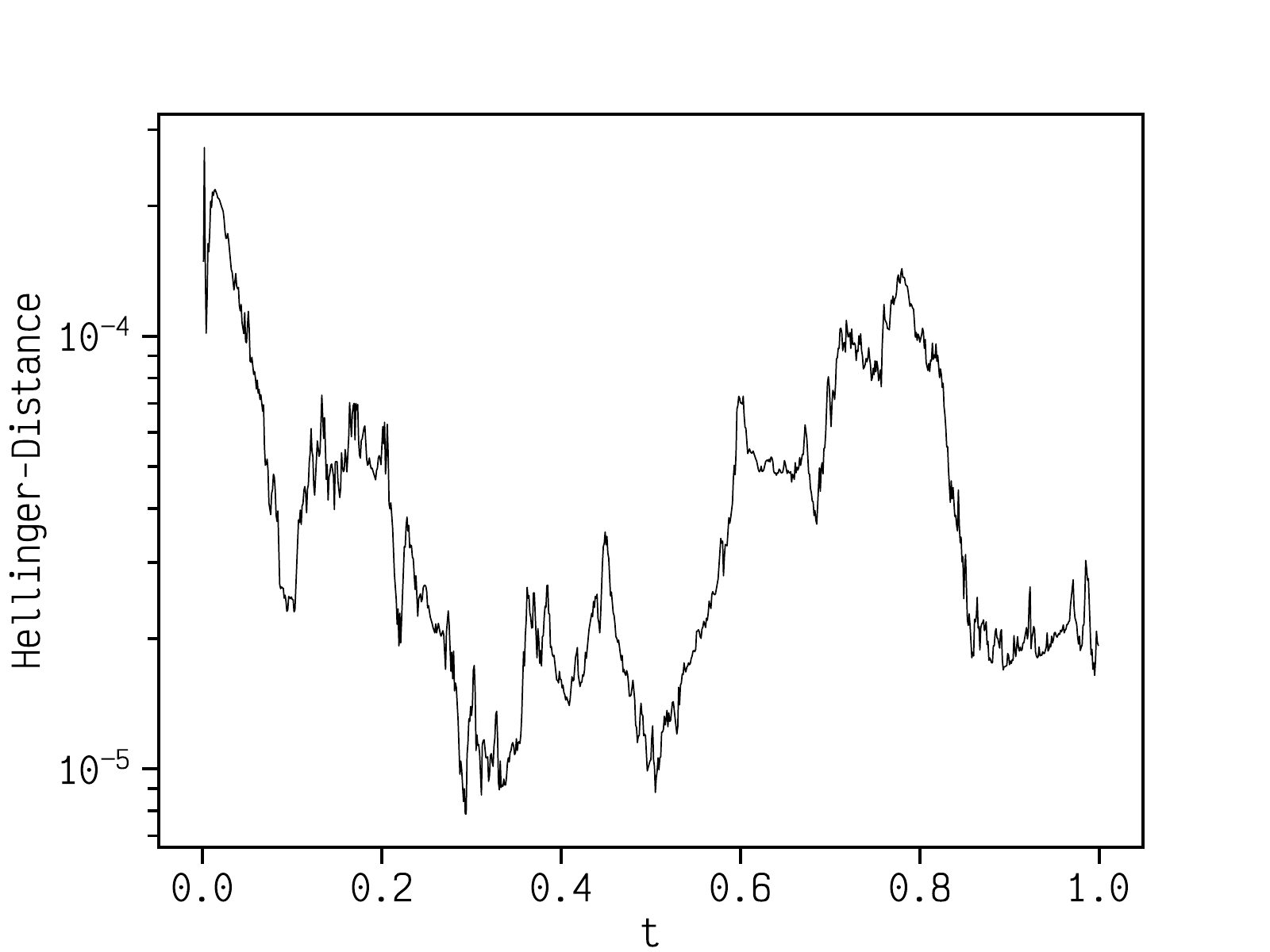}}\\
\subfloat[SPG-level 6]{\label{fig:Hell_pf_vs_kf_spg_769}\includegraphics[trim={\lefttrim cm \verticaltrim cm \righttrim cm \verticaltrim cm},clip,width=0.5\linewidth]{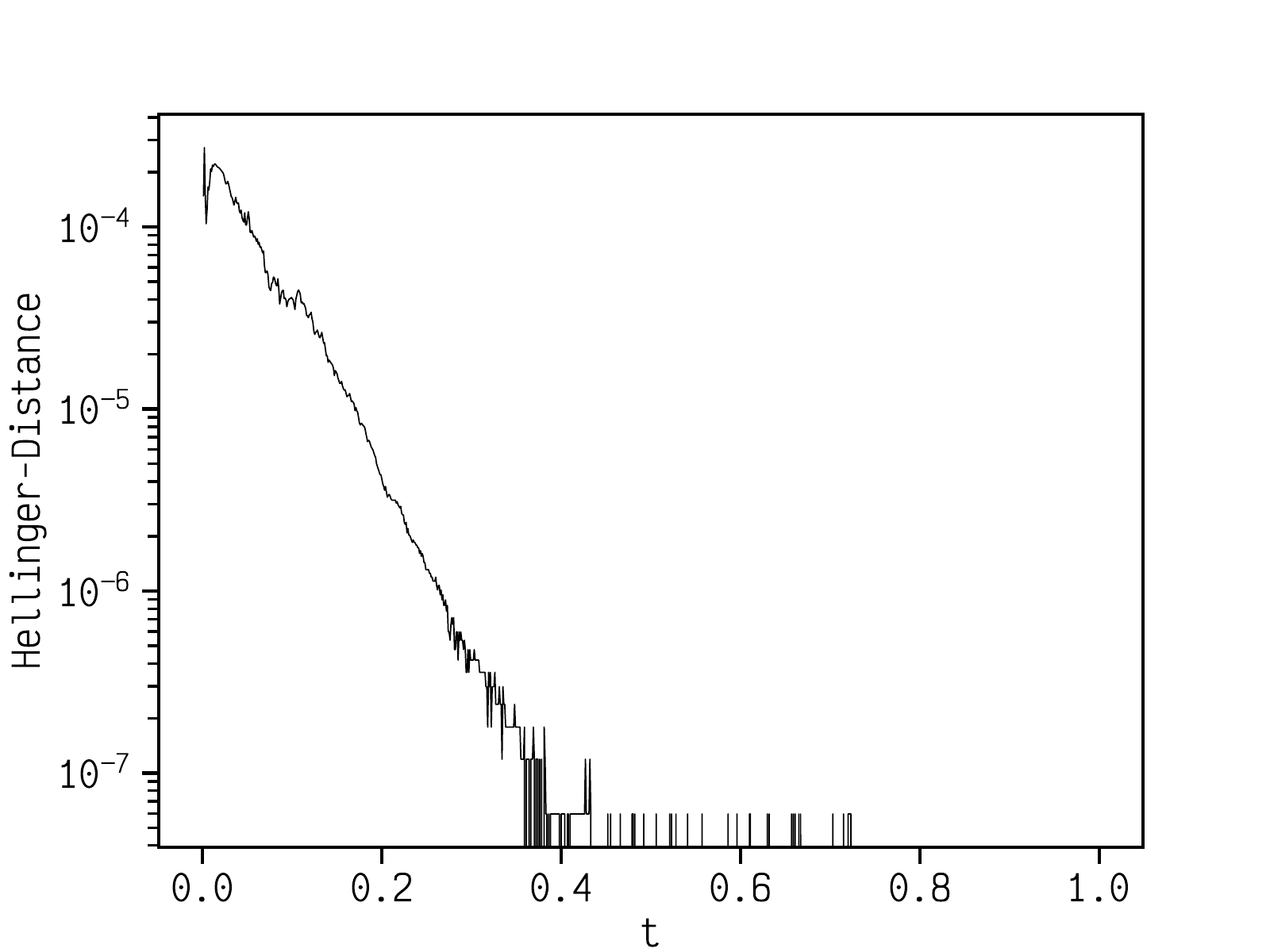}}
\subfloat[qMC-769]{\label{fig:Hell_pf_vs_kf_qmc_145}\includegraphics[trim={\lefttrim cm \verticaltrim cm \righttrim cm \verticaltrim cm},clip,width=0.5\linewidth]{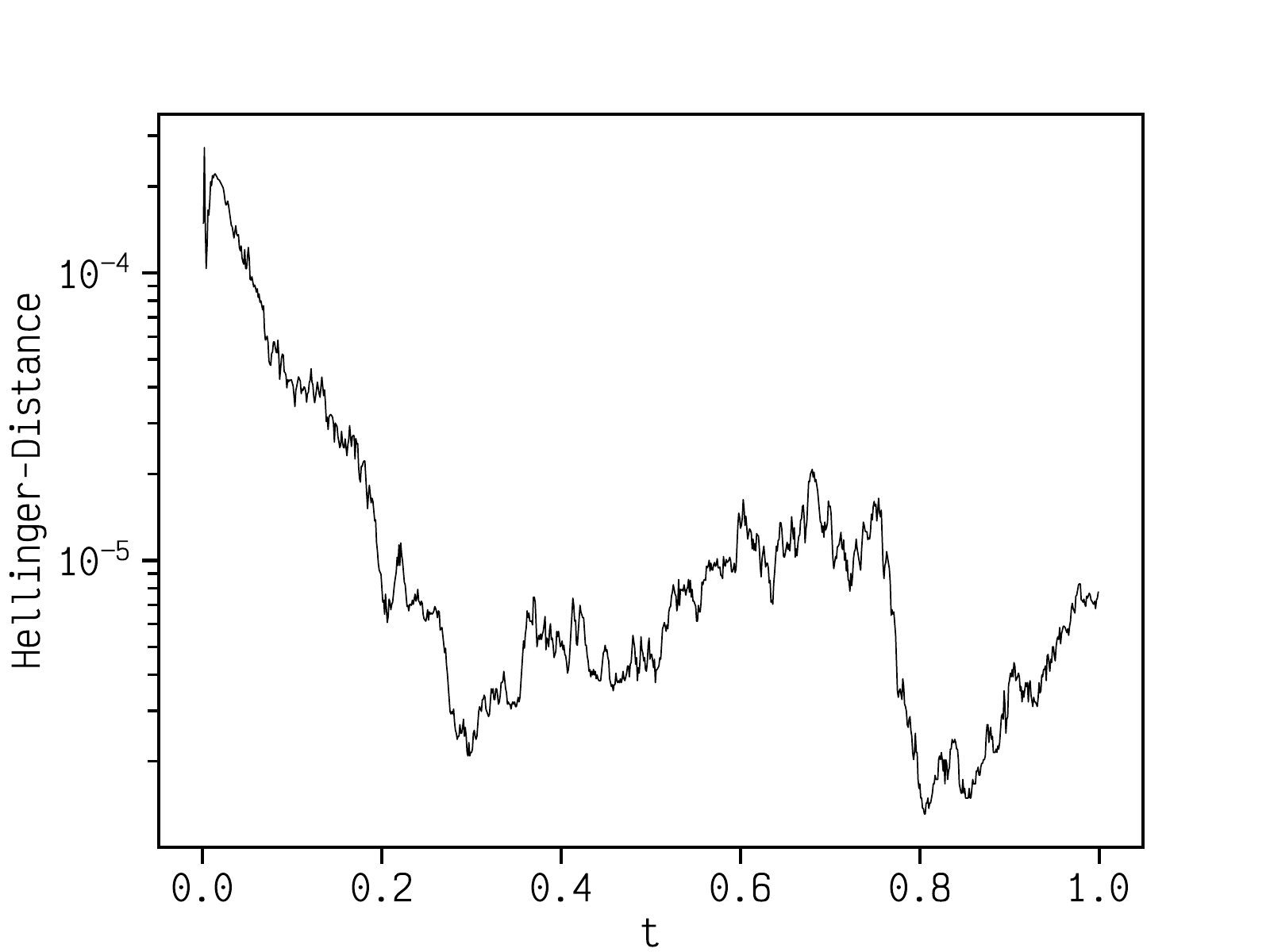}}

\caption{Continuation of Figure \ref{fig:Hellinger_distance_Kalman_1} for different
levels of sparse grid integrators and quasi Monte Carlo points.\label{fig:Hellinger_distance_Kalman_2}}
\end{figure}

\begin{figure}[!h]
\centering
\subfloat[SPG-level 3]{\label{fig:x_0_pf_vs_kf_spg_145}\includegraphics[trim={\lefttrim cm \verticaltrim cm \righttrim cm \verticaltrim cm},clip,width=0.5\linewidth]{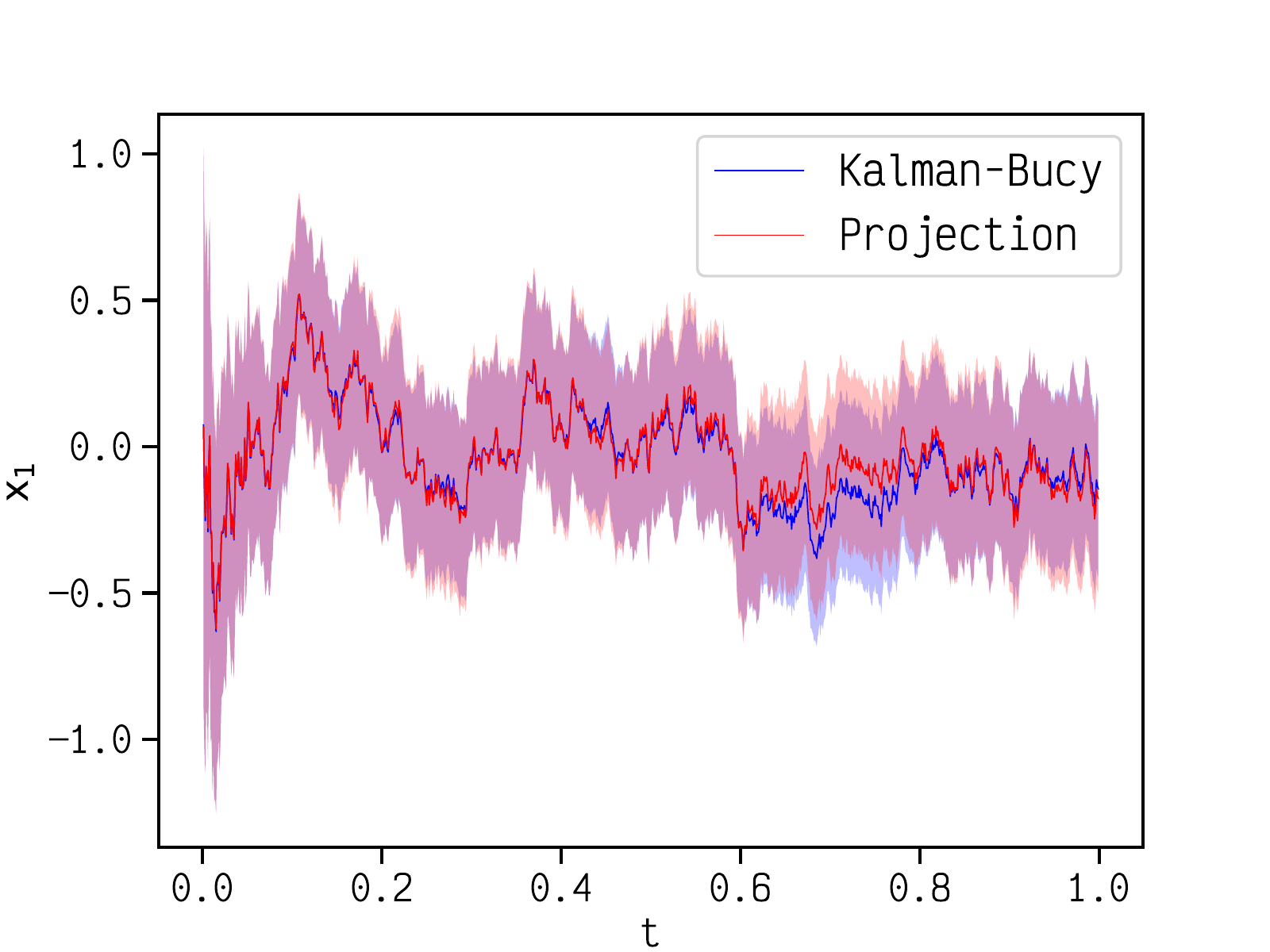}}
\subfloat[qMC-49]{\label{fig:x_0_pf_vs_kf_qmc_145}\includegraphics[trim={\lefttrim cm \verticaltrim cm \righttrim cm \verticaltrim cm},clip,width=0.5\linewidth]{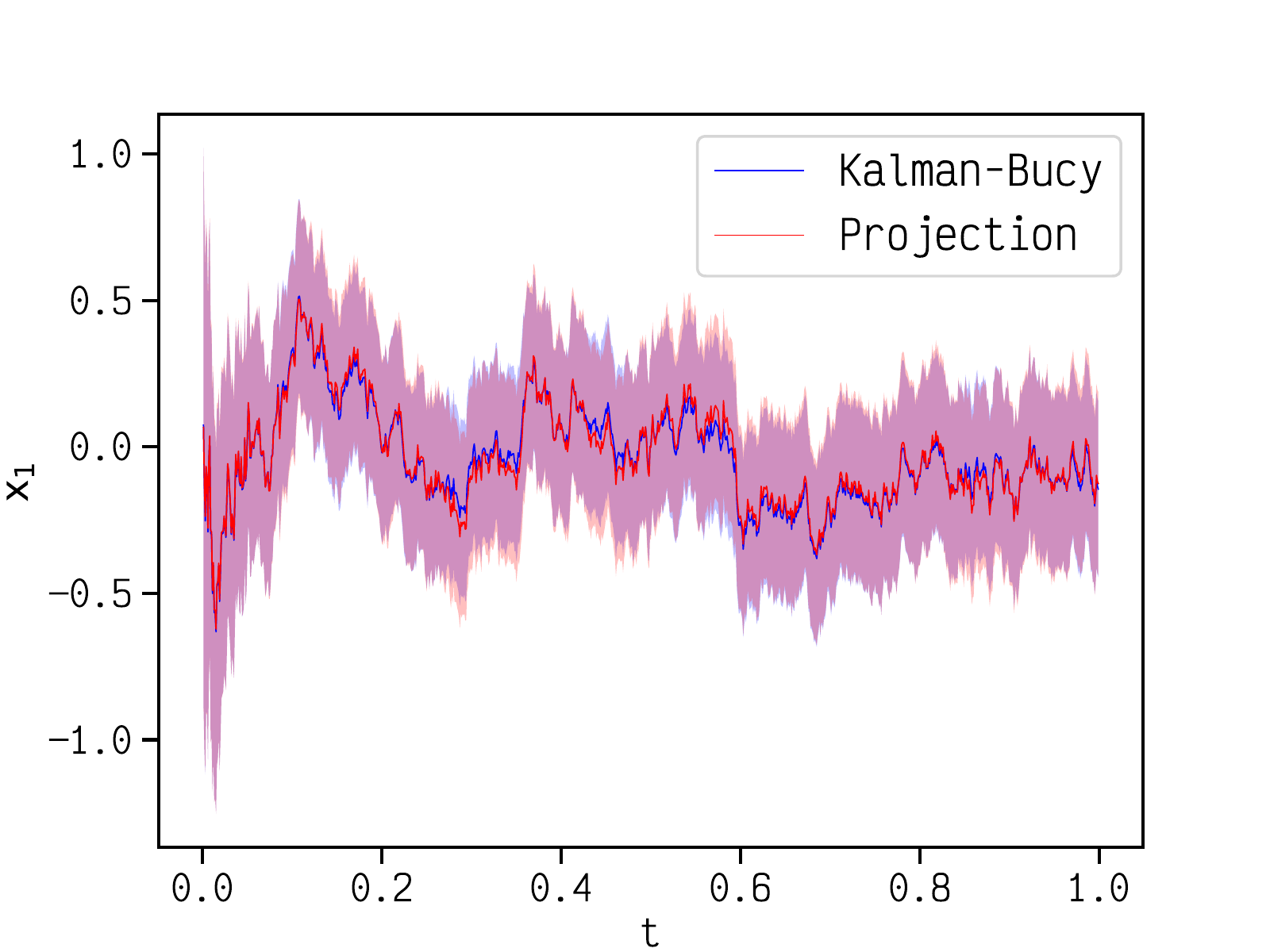}}\\
\subfloat[SPG-level 6]{\label{fig:x_0_pf_vs_kf_spg_1537}\includegraphics[trim={\lefttrim cm \verticaltrim cm \righttrim cm \verticaltrim cm},clip,width=0.5\linewidth]{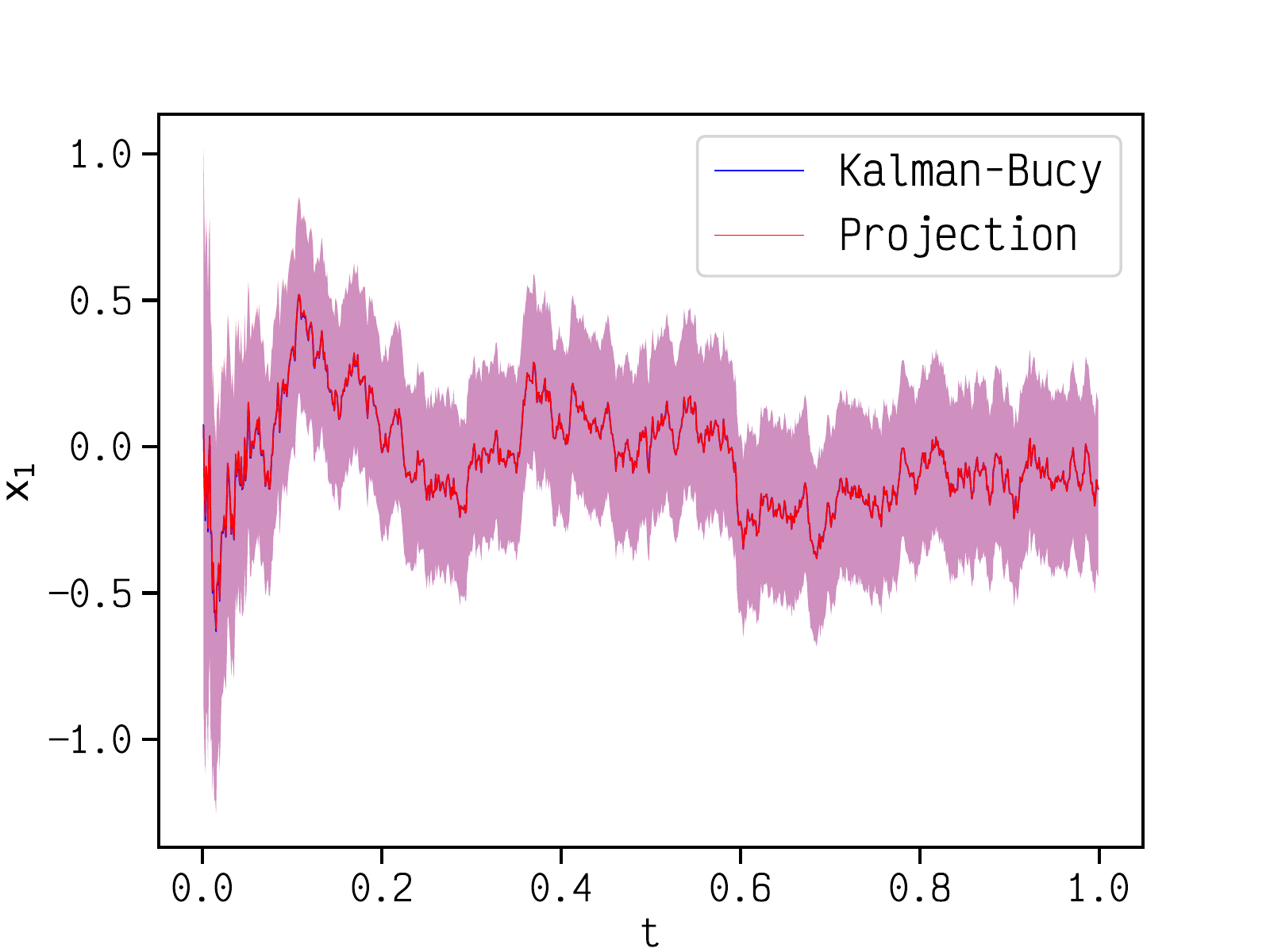}}
\subfloat[qMC-769]{\label{fig:x_0_pf_vs_kf_qmc_1537}\includegraphics[trim={\lefttrim cm \verticaltrim cm \righttrim cm \verticaltrim cm},clip,width=0.5\linewidth]{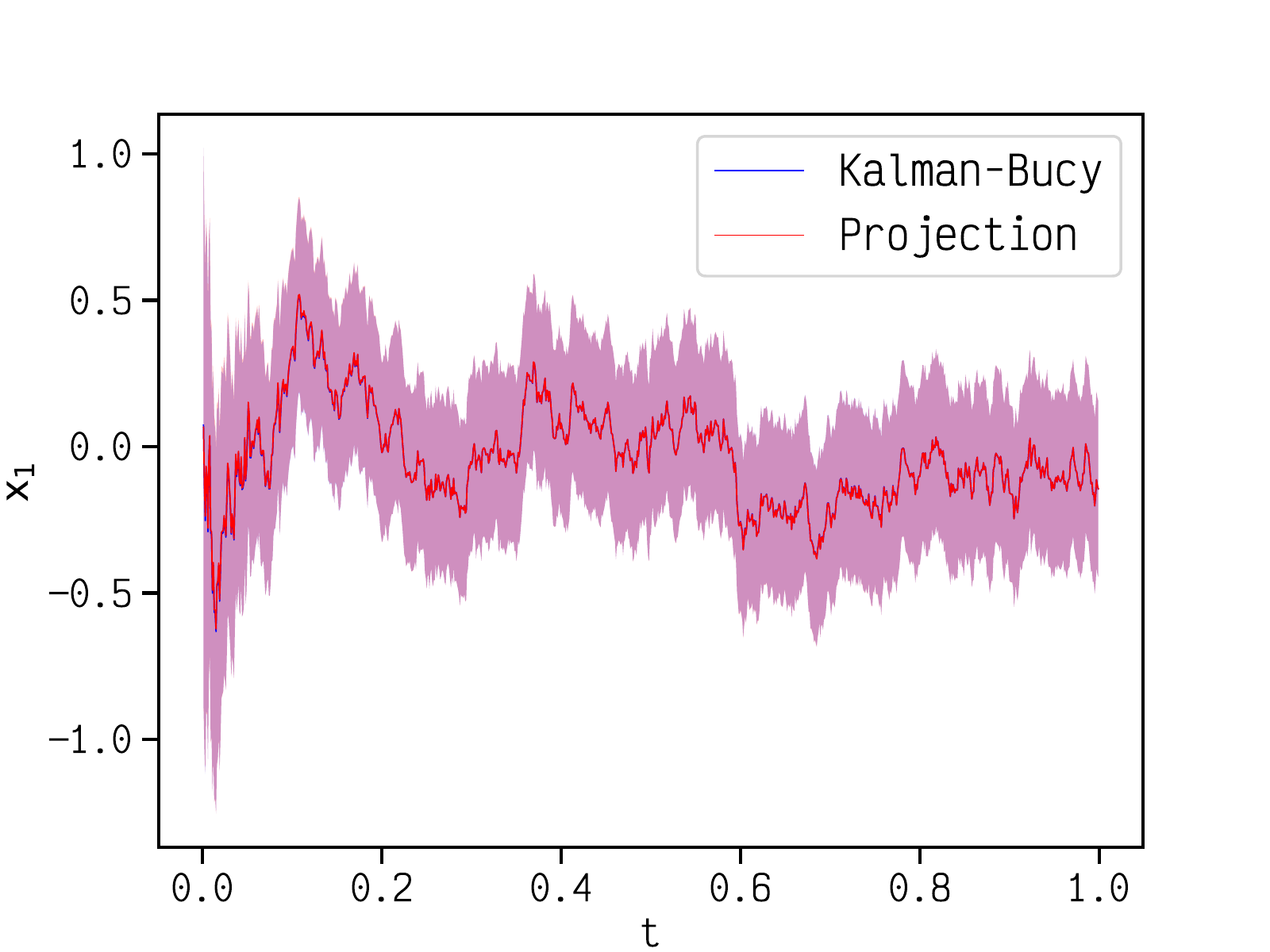}}

\caption{Comparison of mean and one standard deviation confidence intervals for $x_{1,t}$ obtained with the Kalman--Bucy filter and the projection filter solved via different integration setups. \label{fig:Kalman_Bucy_realizations_x_1}}
\end{figure}

\begin{figure}[!h]
\centering
\subfloat[SPG-level 3]{\label{fig:x_1_pf_vs_kf_spg_145}\includegraphics[trim={\lefttrim cm \verticaltrim cm \righttrim cm \verticaltrim cm},clip,width=0.5\linewidth]{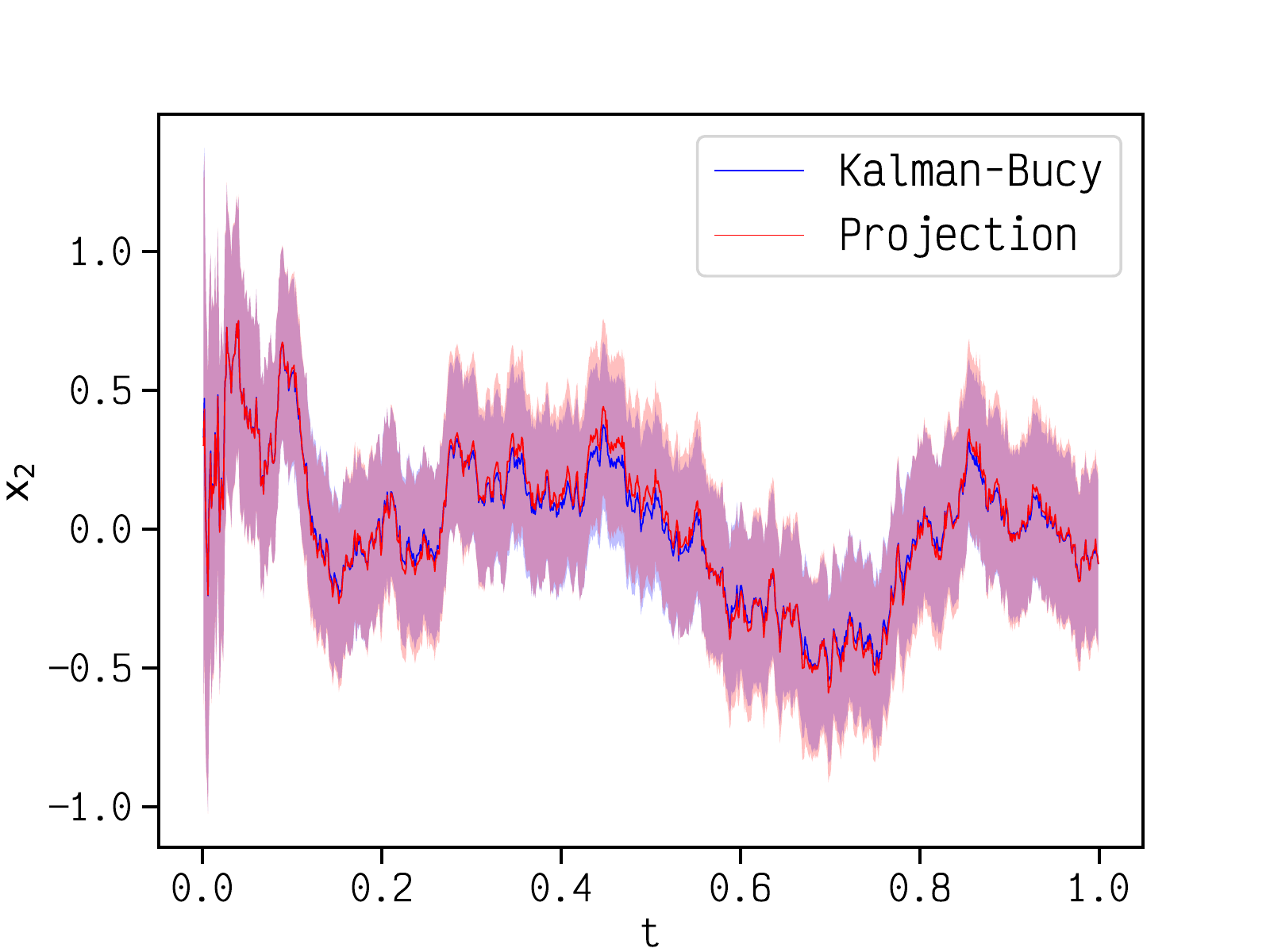}}
\subfloat[qMC-49]{\label{fig:x_1_pf_vs_kf_qmc_145}\includegraphics[trim={\lefttrim cm \verticaltrim cm \righttrim cm \verticaltrim cm},clip,width=0.5\linewidth]{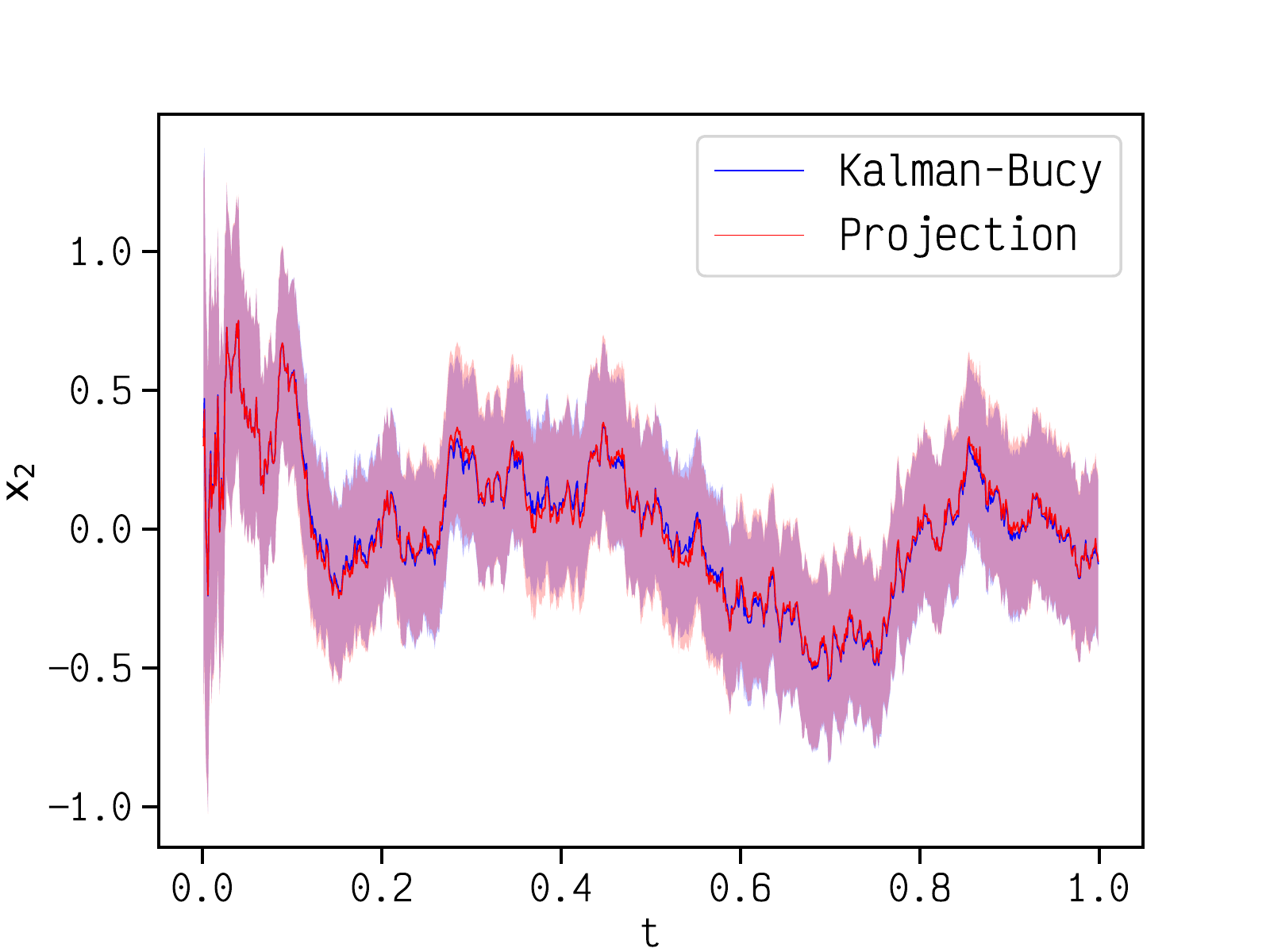}}\\
\subfloat[SPG-level 6]{\label{fig:x_1_pf_vs_kf_spg_1537}\includegraphics[trim={\lefttrim cm \verticaltrim cm \righttrim cm \verticaltrim cm},clip,width=0.5\linewidth]{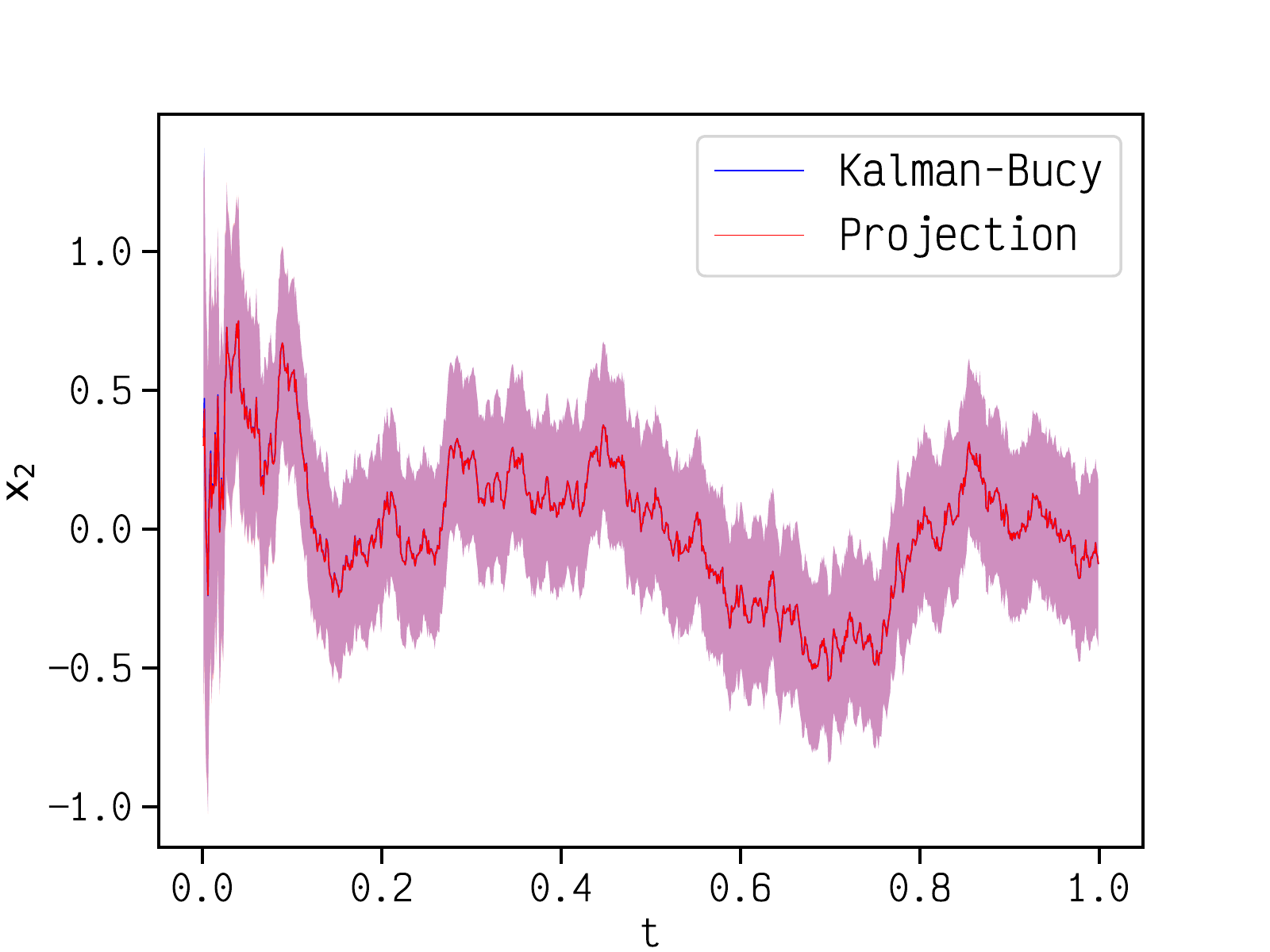}}
\subfloat[qMC-769]{\label{fig:x_1_pf_vs_kf_qmc_1537}\includegraphics[trim={\lefttrim cm \verticaltrim cm \righttrim cm \verticaltrim cm},clip,width=0.5\linewidth]{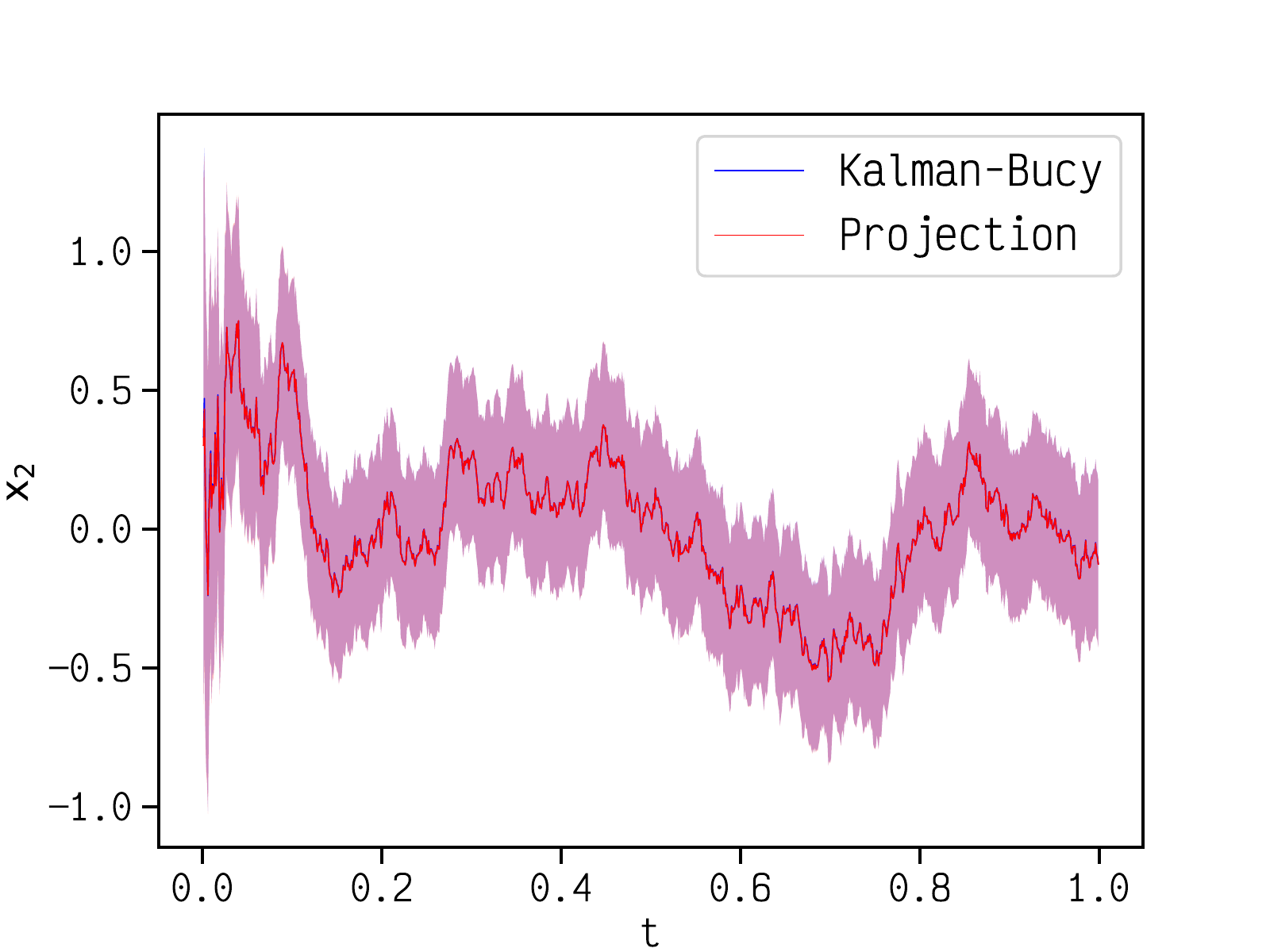}}

\caption{Comparison of mean and one standard deviation confidence intervals for $x_{2,t}$ obtained with the Kalman--Bucy filter and the projection filter solved via different integration setups. \label{fig:Kalman_Bucy_realizations_x_2}}
\end{figure}

\subsubsection{Van der Pol oscillator}

In this section, we compare the projection filter, the numerical solution of the Kushner--Stratonovich equation obtained via a Crank--Nicolson scheme, and a bootstrap particle filter with systematic resampling \citep{Chopin2020}. The filtering model considered is the partially observed the Van der Pol oscillator:
\begin{subequations}
    \begin{align}
        d\mqty[x_{1,t}\\x_{2,t}] &= \mqty[x_{2,t}\\ \mu (1 - x_{1,t}^2)x_{2,t} - x_{1,t}] dt + \mqty[0\\ \sigma_w] dW_t,\\
        dY &= x_{1,t} dt + \sigma_v dV_t.
    \end{align}\label{eqs:SDE_VDP_n_d_linear}    
\end{subequations}
For simulation, we set $\mu = 0.3$ and $\sigma_v=\sigma_w = 1$. We also set $dt=2.5\times 10^{-4}$. We discretize the dynamic model \eqref{eqs:SDE_VDP_n_d_linear} using the Euler--Maruyama scheme for both the particle filter and the simulation. For the particle filter, we use $4\times 10^4$ samples at each time. The time step of the finite-difference scheme is set to be $dt=1\times 10^{-3}$. The two-dimensional grid is set to be $(-6,6)^2$ with $150^2$ equidistant points. In the projection filter we use a sparse-grid integration scheme where we set the level to 8, which corresponds to 4097 quadrature points, and we set the bijection to $\text{arctanh}$.   
The natural statistics in the projection filter are set to be $\left\{ x^{\bunderline{i}} \right\}$, where $\bunderline{i} \in \{ (0,1),(0,2),(0,3),(0,4),(1,0),(1,1),$ $(1,2),(1,3),(2,0),(2,1),(2,2),(3,0),(3,1)$ $,(4,0) \}$. Further, the initial density is set to be a Gaussian density, with variance equal to $5\times 10^1 \sigma_w^2 dt I$ and mean at the origin. 

\begin{figure}[!h]
    \centering
    \includegraphics[trim={0.cm 0cm \righttrim cm 0cm},clip,width=0.60\linewidth]{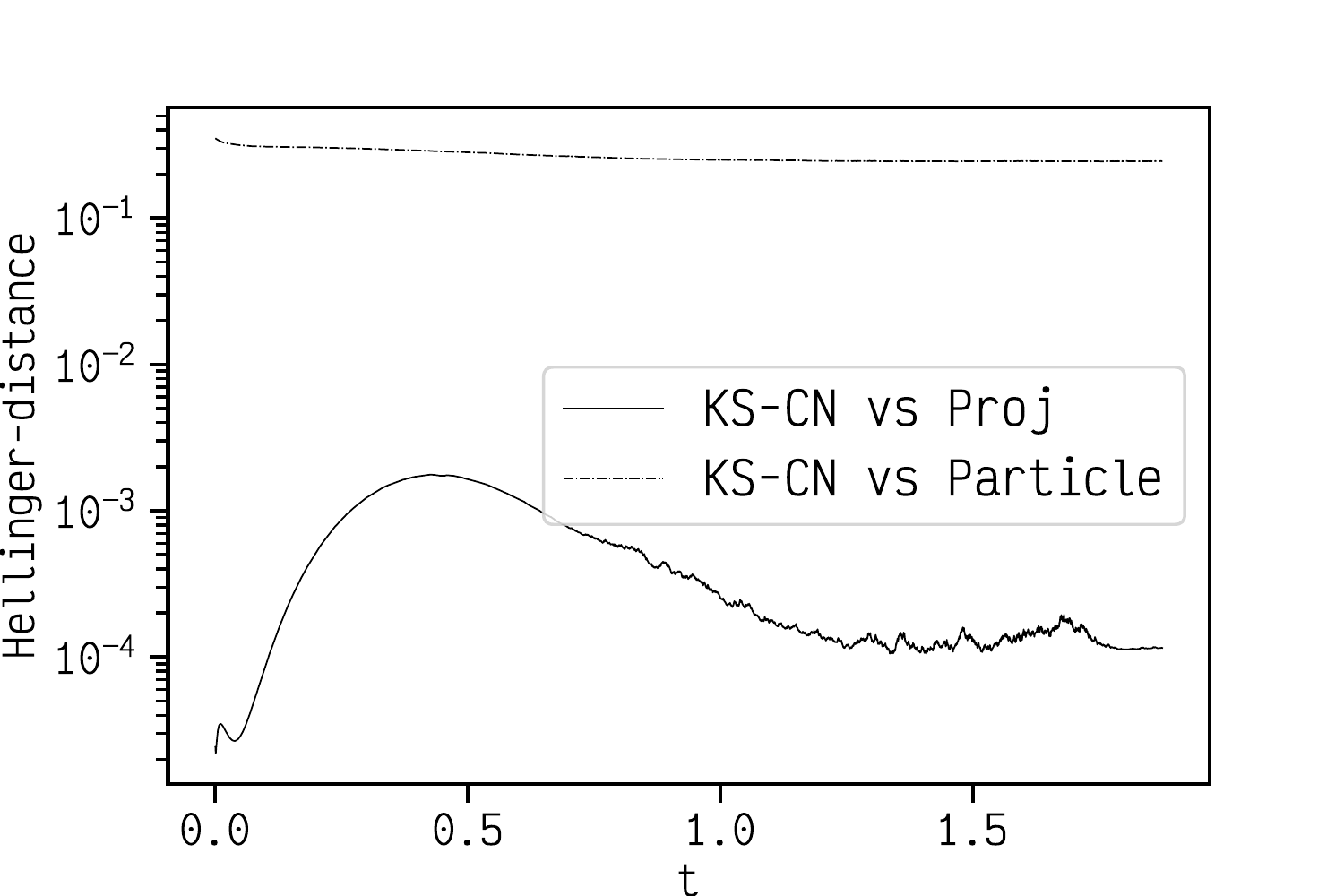}
    \caption{Hellinger distance between the projection filter densities for the Van der Pol oscillator and the densities obtained from the Crank--Nicolson solution to the Kushner--Stratonovich equation (solid black) along with the Hellinger distances between the Gaussian kernel density estimates computed from the particle filter samples and the densities obtained by the Crank--Nicolson solution (dashed black). \label{fig:Hell_vdp}}
\end{figure}

Figure~\ref{fig:Hell_vdp} shows time of evolution of the Hellinger distance between the Crank--Nicolson solution to the Kushner--Stratonovich SPDE and the projection filter, and the Hellinger distance between the Crank--Nicolson solution and the particle filter (estimated using a Gaussian kernel density estimate). It can be seen that the Hellinger distances of the projection filter are significantly smaller than those of the particle filter. Figures \ref{fig:Zakai_vs_GaussianKDE_vs_projection_filter} show the evolution of the approximate densities obtained by the Crank--Nicolson method, particle filter, and the projection filter. There figures confirm that the projection filter approximates the Kushner--Stratonovich solution better than the particle filter. We note though that the Crank--Nicolson solution to the Kushner--Stratonovich equation is only stable up to around 1.75s. After that, a high frequency oscillation starts to appear (not shown). Thus we believe that the Crank--Nicolson solution of the Kushner--Stratonovich equation is no longer accurate after 1.75s. This numerical stability issue of the Crank--Nicolson scheme for the Kushner--Stratonovich equation might also contribute to the Hellinger distance between the finite-difference approximation of the Kushner--Stratonovich equation and the density of the projection filter in Figure~\ref{fig:Hell_vdp}.

A comparison of the different moment approximations is shown in Figures 
\ref{fig:Error_comparison_first_part}, and  \ref{fig:Error_comparison_second_part}. 
The figures show the absolute difference between these expected values in log scale, where $\E_a, \E_b, $ and $\E_c$ are the expectation operator where the densities are obtained using Crank--Nicolson scheme, projection filter, and  the particle filter, respectively. Figures \ref{fig:Error_comparison_first_part}, and  \ref{fig:Error_comparison_second_part} clearly demonstrate that in terms of expectations of the natural statistics and their respective absolute errors, in the present setup, the projection filter offers a much better approximation to the problem than the particle filter.

\newcommand{\lefttrimDensity}{0.45}
\newcommand{\righttrimDensity}{0.25}
\newcommand{\verticaltrimDensity}{0.0}
\begin{figure}[!h]
\centering
\makebox[\textwidth][c]{
\includegraphics[trim={\lefttrimDensity cm \verticaltrimDensity cm \righttrimDensity cm \verticaltrimDensity cm},clip,width=0.98
\textwidth]{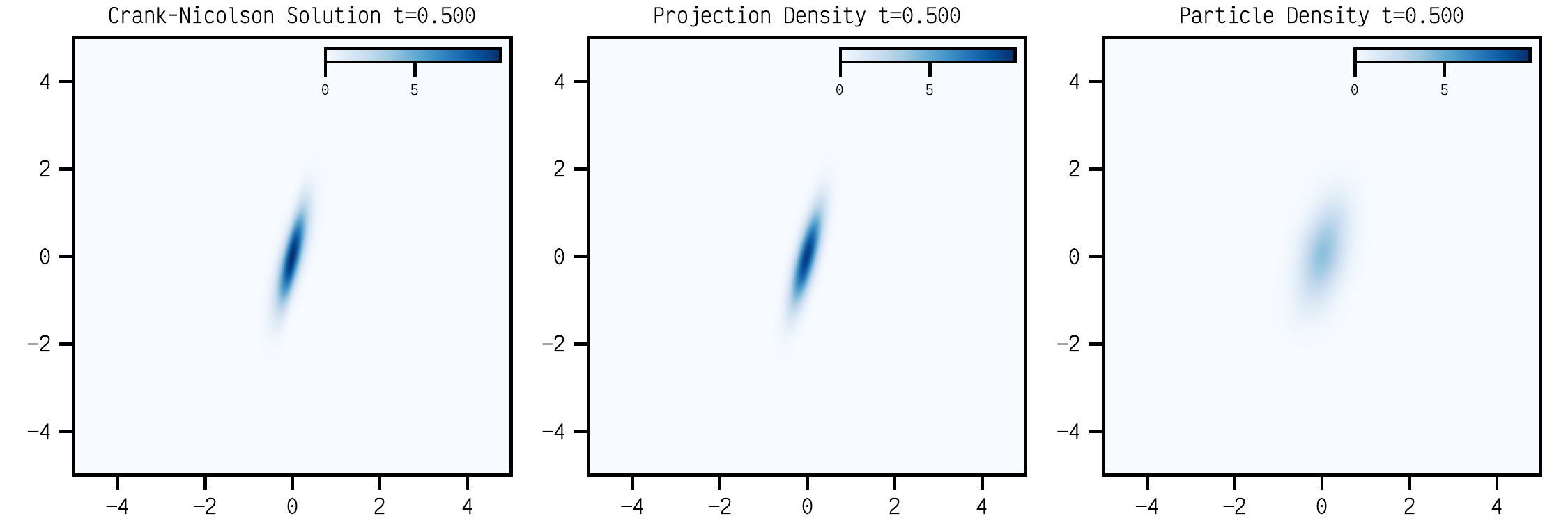}
}
\makebox[\textwidth][c]{
\includegraphics[trim={\lefttrimDensity cm \verticaltrimDensity cm \righttrimDensity cm \verticaltrimDensity cm},clip,width=0.98
\textwidth]{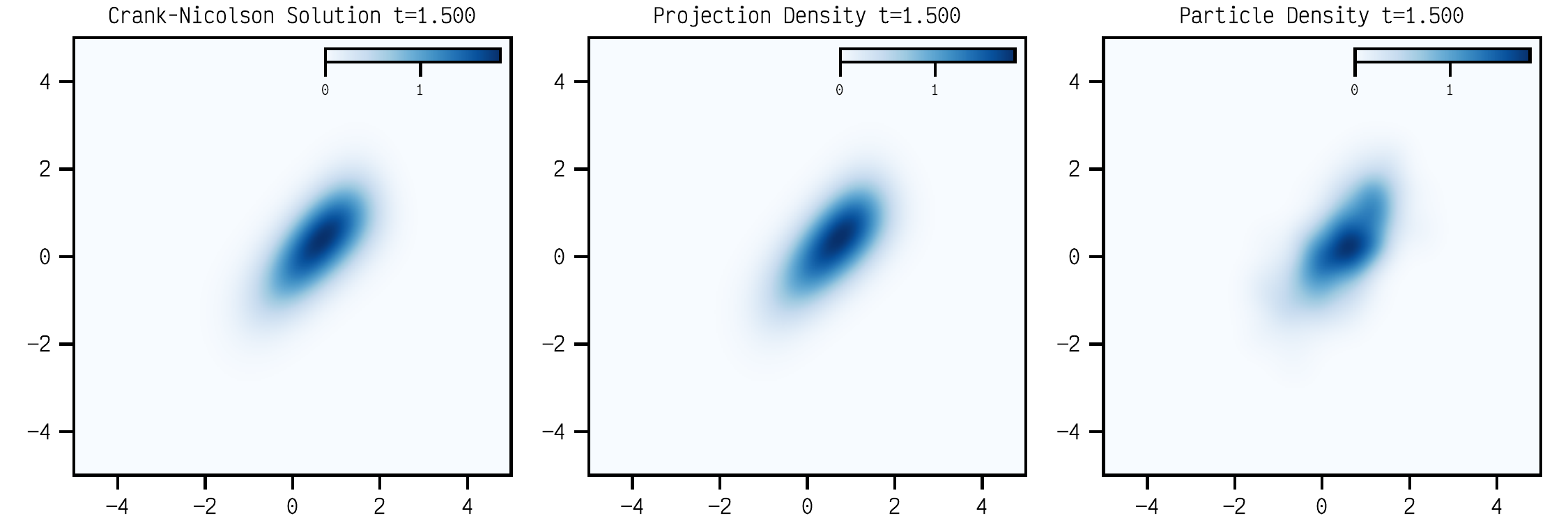}
}
\makebox[\textwidth][c]{
\includegraphics[trim={\lefttrimDensity cm \verticaltrimDensity cm \righttrimDensity cm \verticaltrimDensity cm},clip,width=0.98
\textwidth]{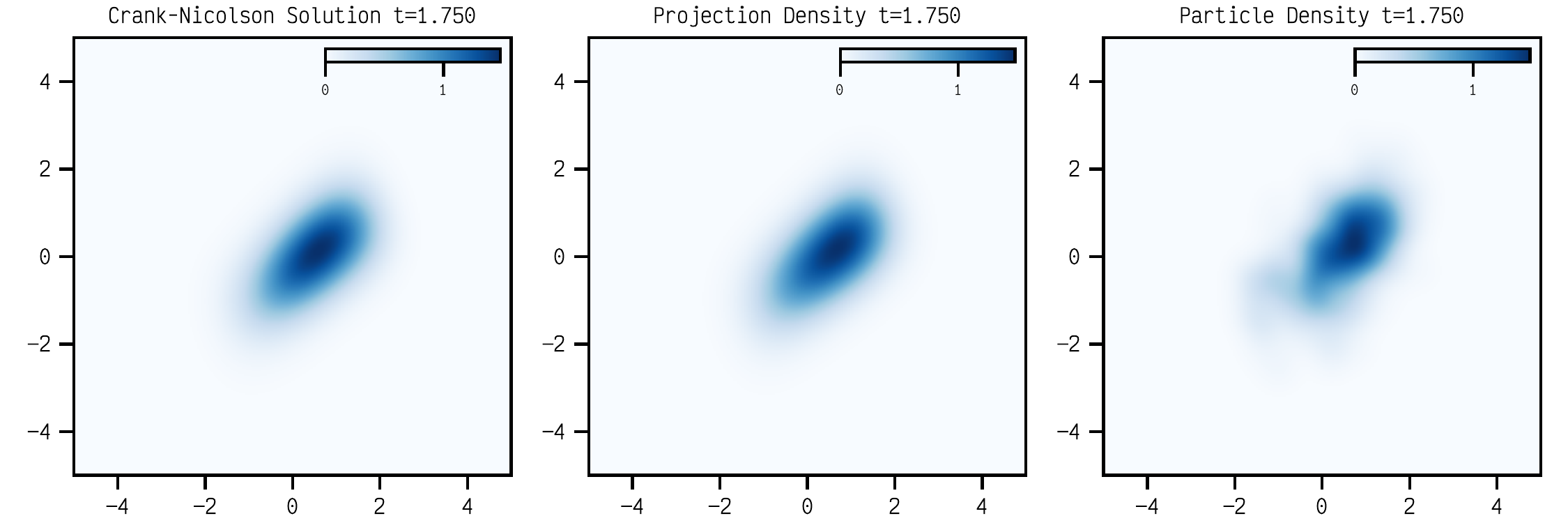}
}
\caption{Comparison of solution of the Crank--Nicolson solution to the Kushner--Stratonovich equation, the densities of the projection filter for Van der Pol dynamics \eqref{eqs:SDE_VDP_n_d_linear}, and Gaussian kernel density estimates from the solution of the particle filter. The Gaussian kernel density estimate smoothing coefficient is set to be proportional to the Cholesky factor of the sample covariance matrix. 
\label{fig:Zakai_vs_GaussianKDE_vs_projection_filter}}
\end{figure}

\newcommand{\lefttrimMoment}{0.45}
\newcommand{\righttrimMoment}{0.25}
\newcommand{\verticaltrimMoment}{0.45}
\newcommand{\figcolumnwidth}{0.5\textwidth}
\newcommand{\figurewidth}{0.49\textwidth}

\begin{figure}[!h]
\centering
\makebox[\textwidth][c]{
\begin{tabular}{m{\figcolumnwidth}m{\figcolumnwidth}m{\figcolumnwidth}}
\includegraphics[trim={\lefttrimMoment cm \verticaltrimMoment cm \righttrimMoment cm \verticaltrimMoment cm},clip,width=\figurewidth]{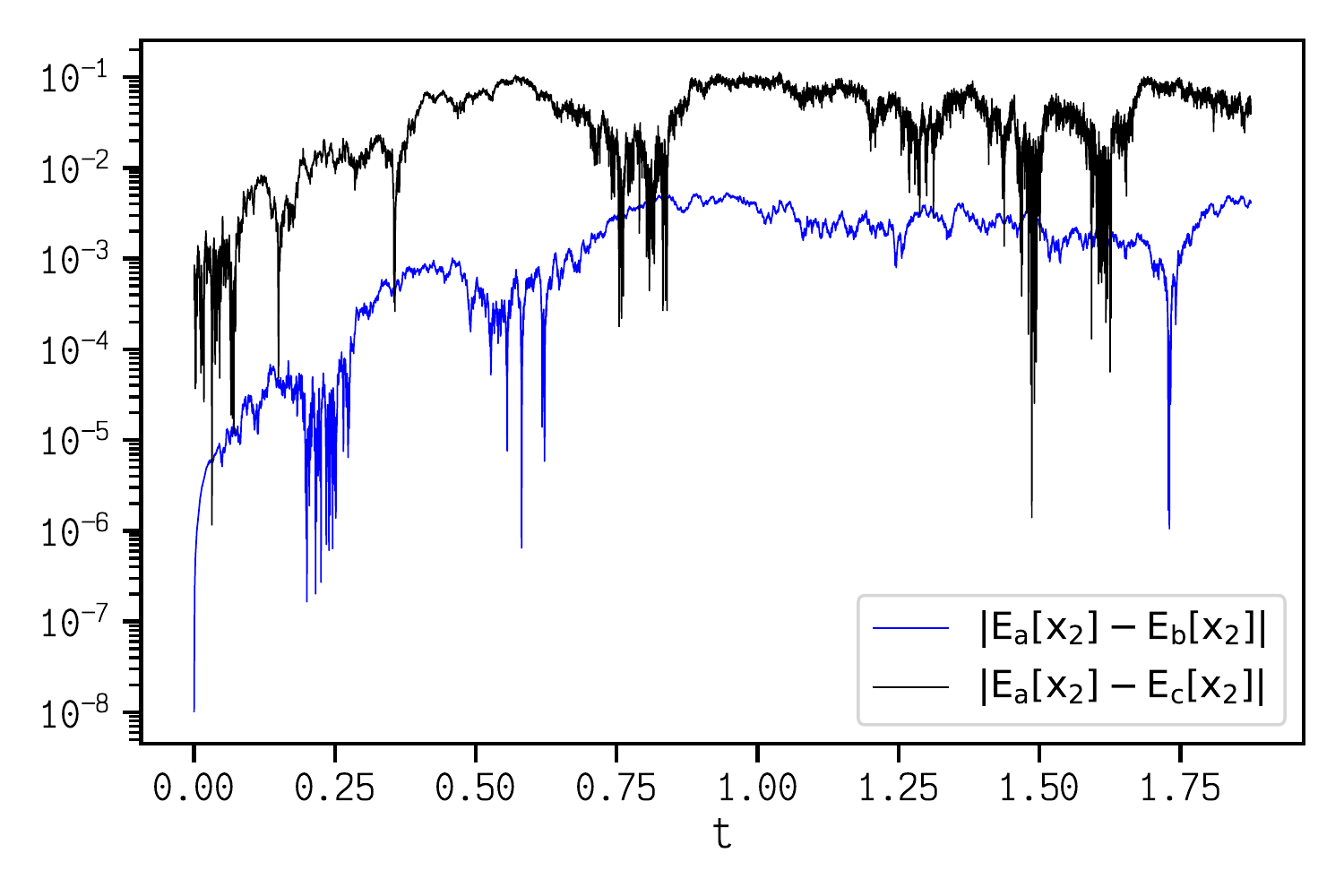}&
\includegraphics[trim={\lefttrimMoment cm \verticaltrimMoment cm \righttrimMoment cm \verticaltrimMoment cm},clip,width=\figurewidth]{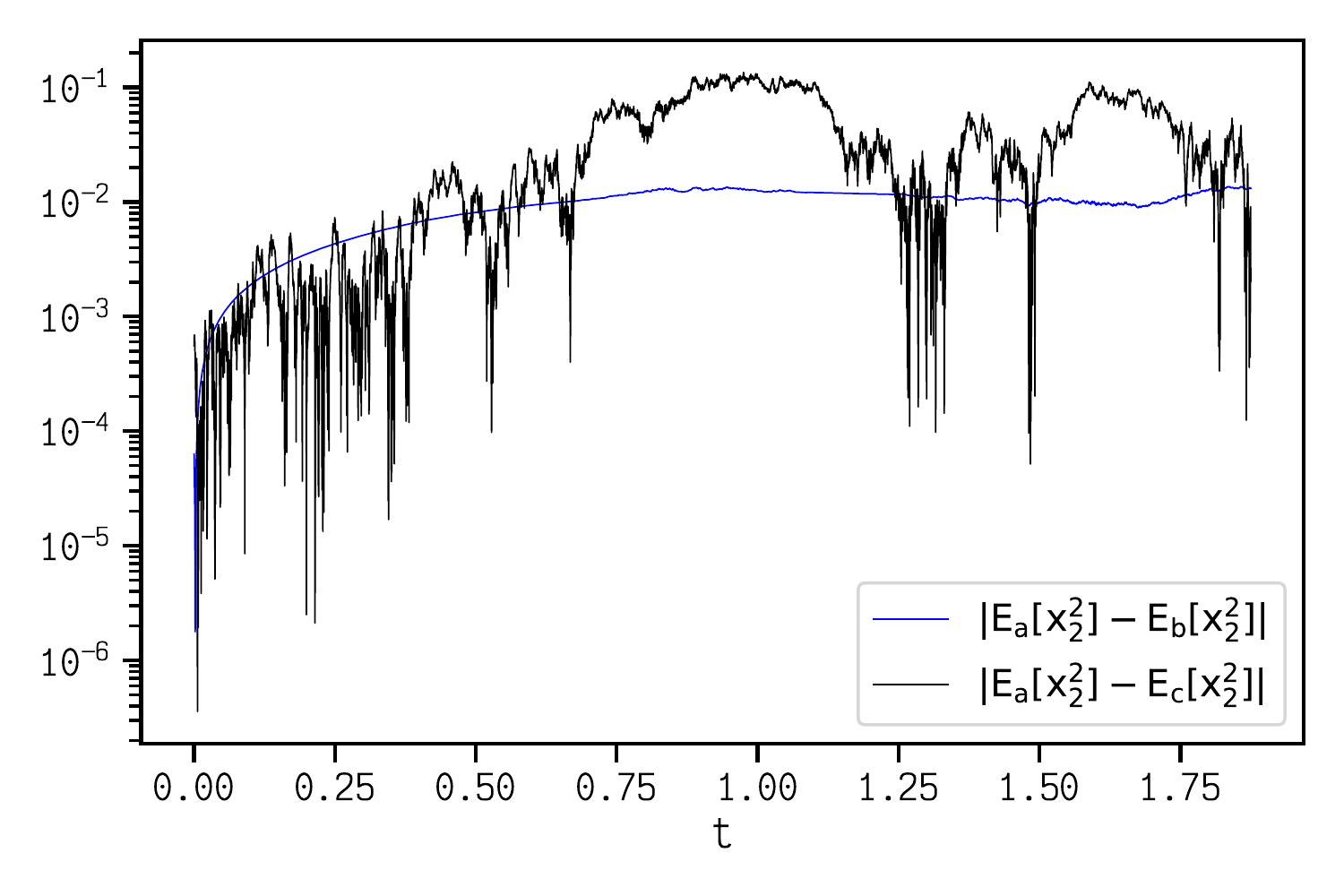}\\
\includegraphics[trim={\lefttrimMoment cm \verticaltrimMoment cm \righttrimMoment cm \verticaltrimMoment cm},clip,width=\figurewidth]{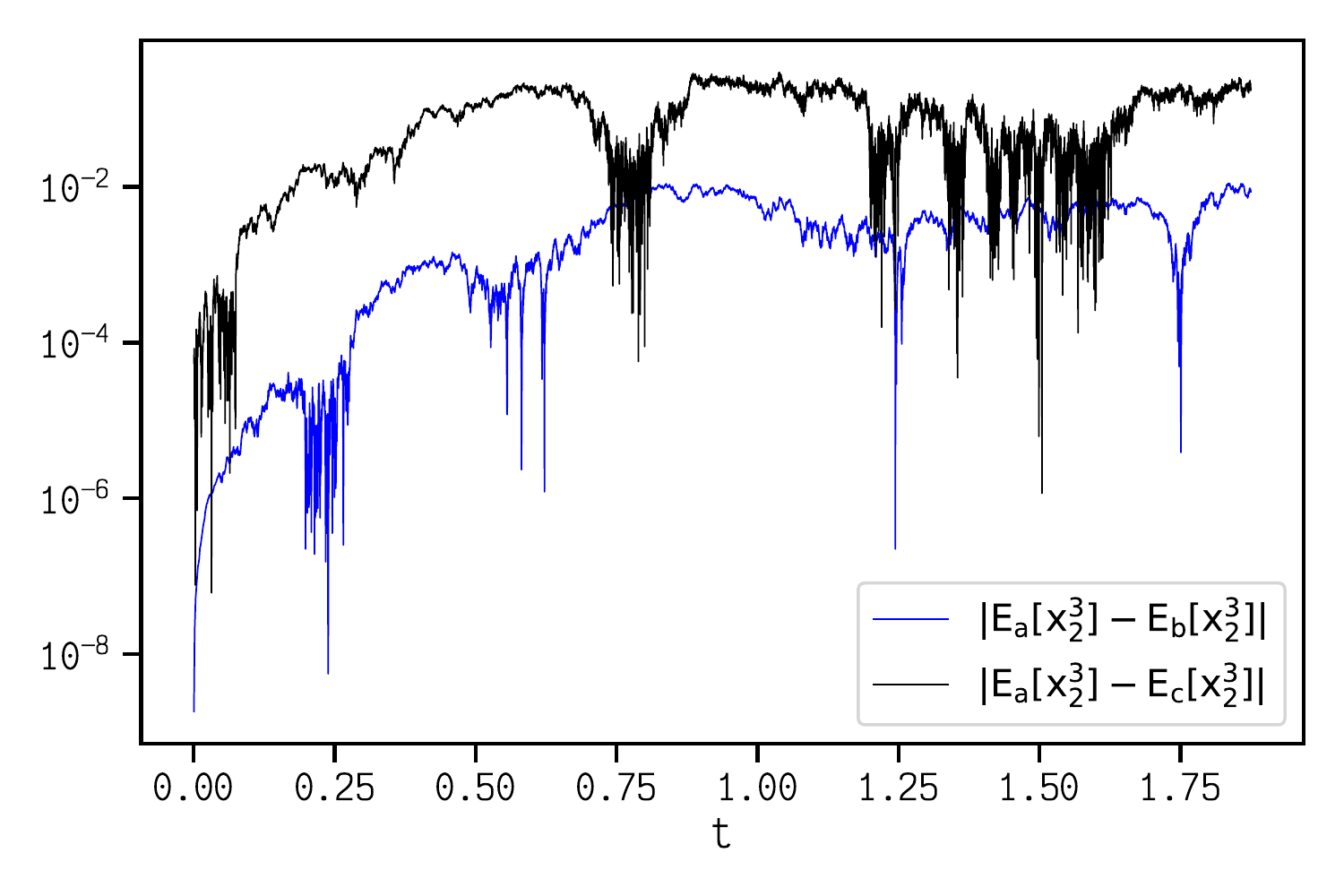}&
\includegraphics[trim={\lefttrimMoment cm \verticaltrimMoment cm \righttrimMoment cm \verticaltrimMoment cm},clip,width=\figurewidth]{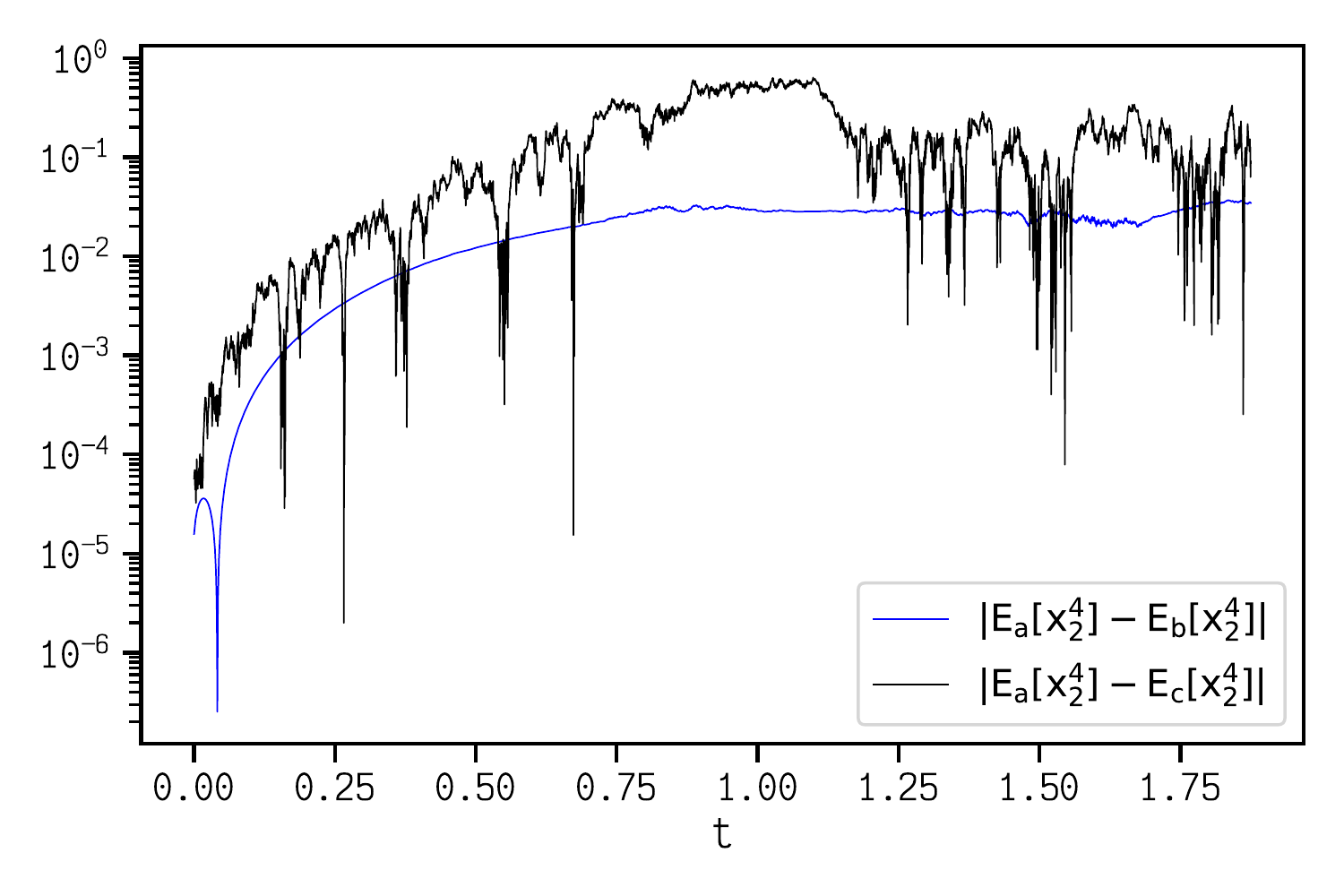}\\
\includegraphics[trim={\lefttrimMoment cm \verticaltrimMoment cm \righttrimMoment cm \verticaltrimMoment cm},clip,width=\figurewidth]{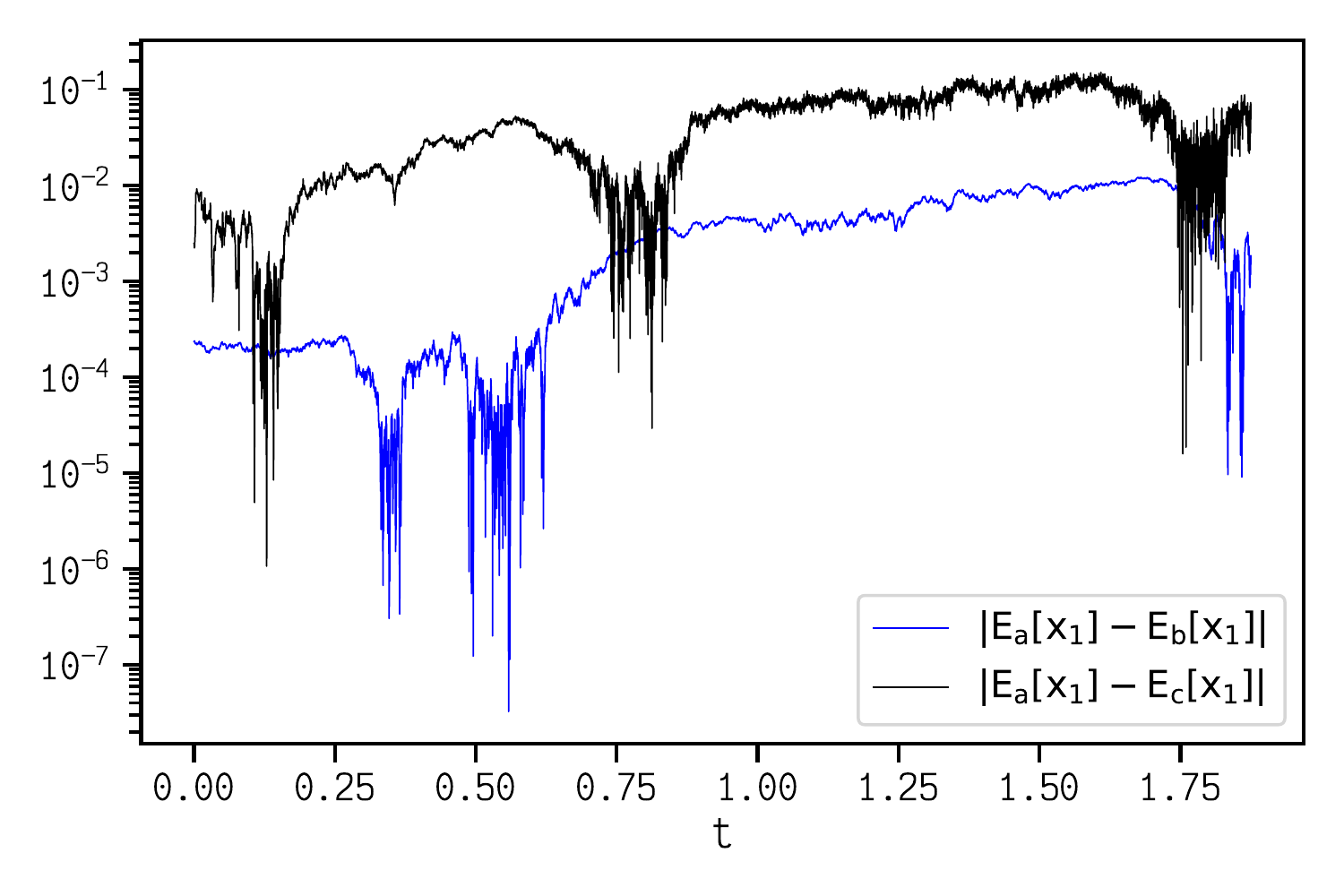}&
\includegraphics[trim={\lefttrimMoment cm \verticaltrimMoment cm \righttrimMoment cm \verticaltrimMoment cm},clip,width=\figurewidth]{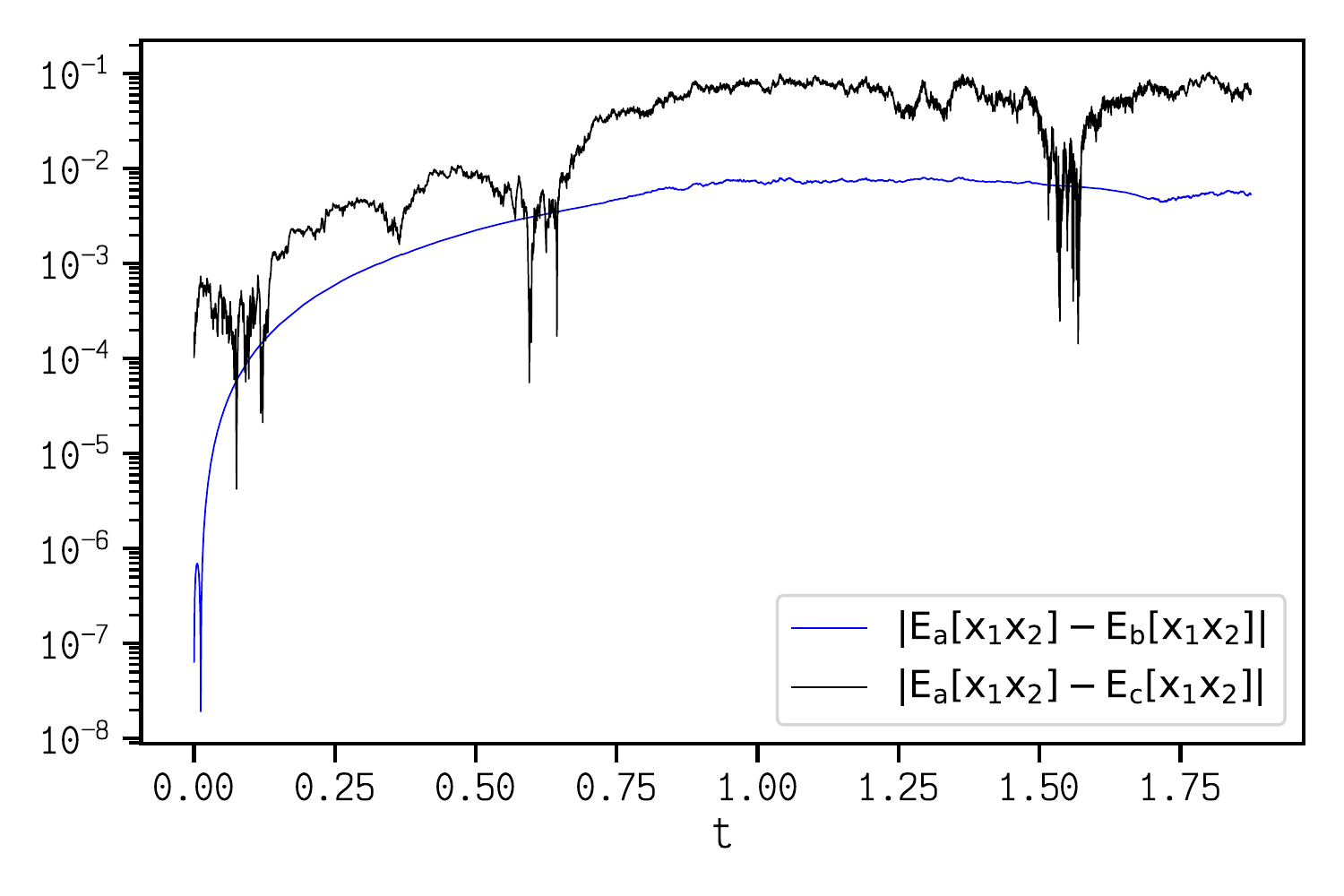}
\end{tabular}
}
\caption{Absolute differences of the approximate expected values of the natural statistics. In these figures $\E_a$ is the expectation operator where the densities are obtained using Crank--Nicolson scheme, $\E_b$ uses densities obtained using projection filter,  and $\E_c$ is the expectation operator where the densities are the empirical densities of the particle filter, respectively.
\label{fig:Error_comparison_first_part}}
\end{figure}

\begin{figure}[!h]
\centering
\makebox[\textwidth][c]{
\begin{tabular}{m{\figcolumnwidth}m{\figcolumnwidth}m{\figcolumnwidth}}
\includegraphics[trim={\lefttrimMoment cm \verticaltrimMoment cm \righttrimMoment cm \verticaltrimMoment cm},clip,width=\figurewidth]{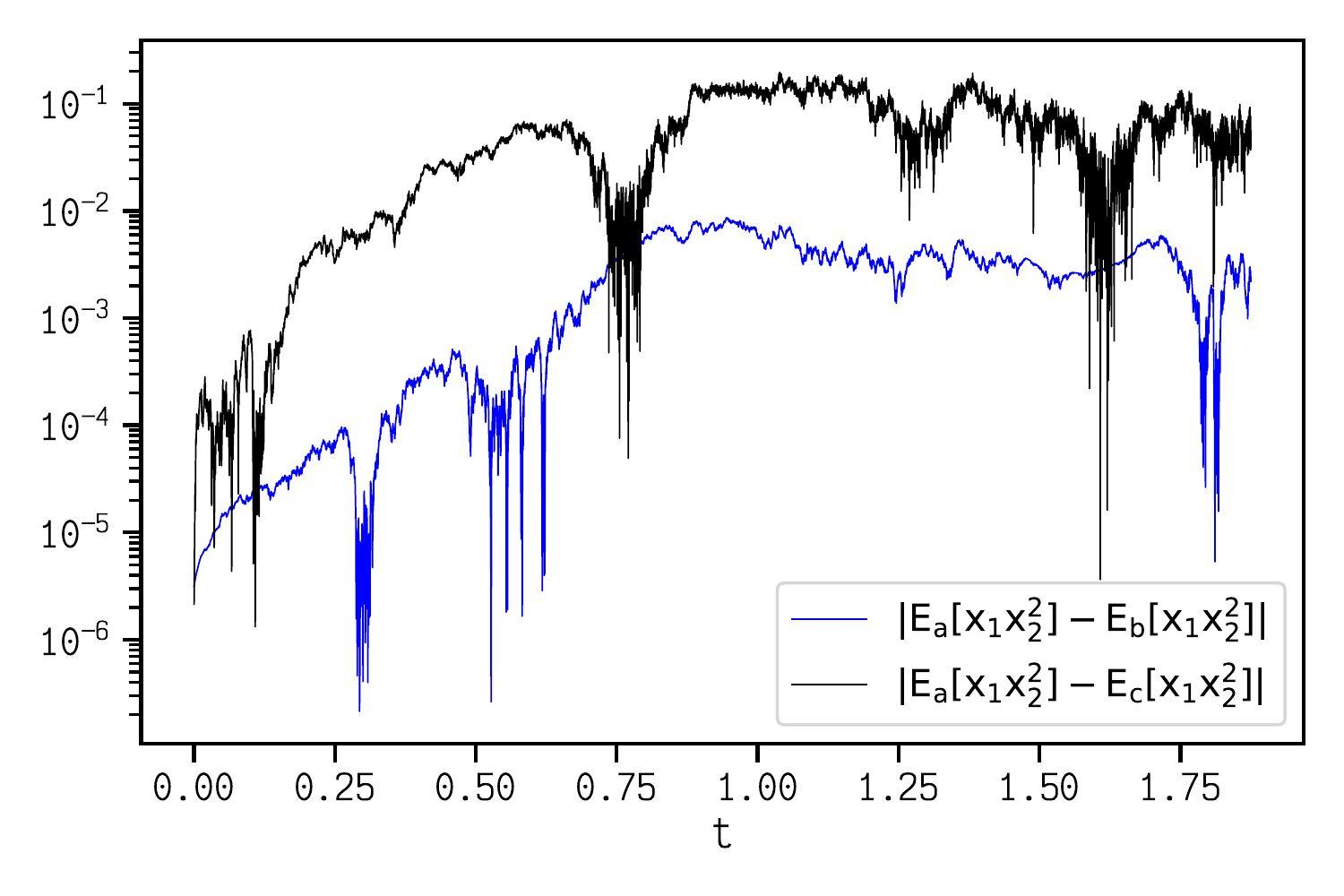}&
\includegraphics[trim={\lefttrimMoment cm \verticaltrimMoment cm \righttrimMoment cm \verticaltrimMoment cm},clip,width=\figurewidth]{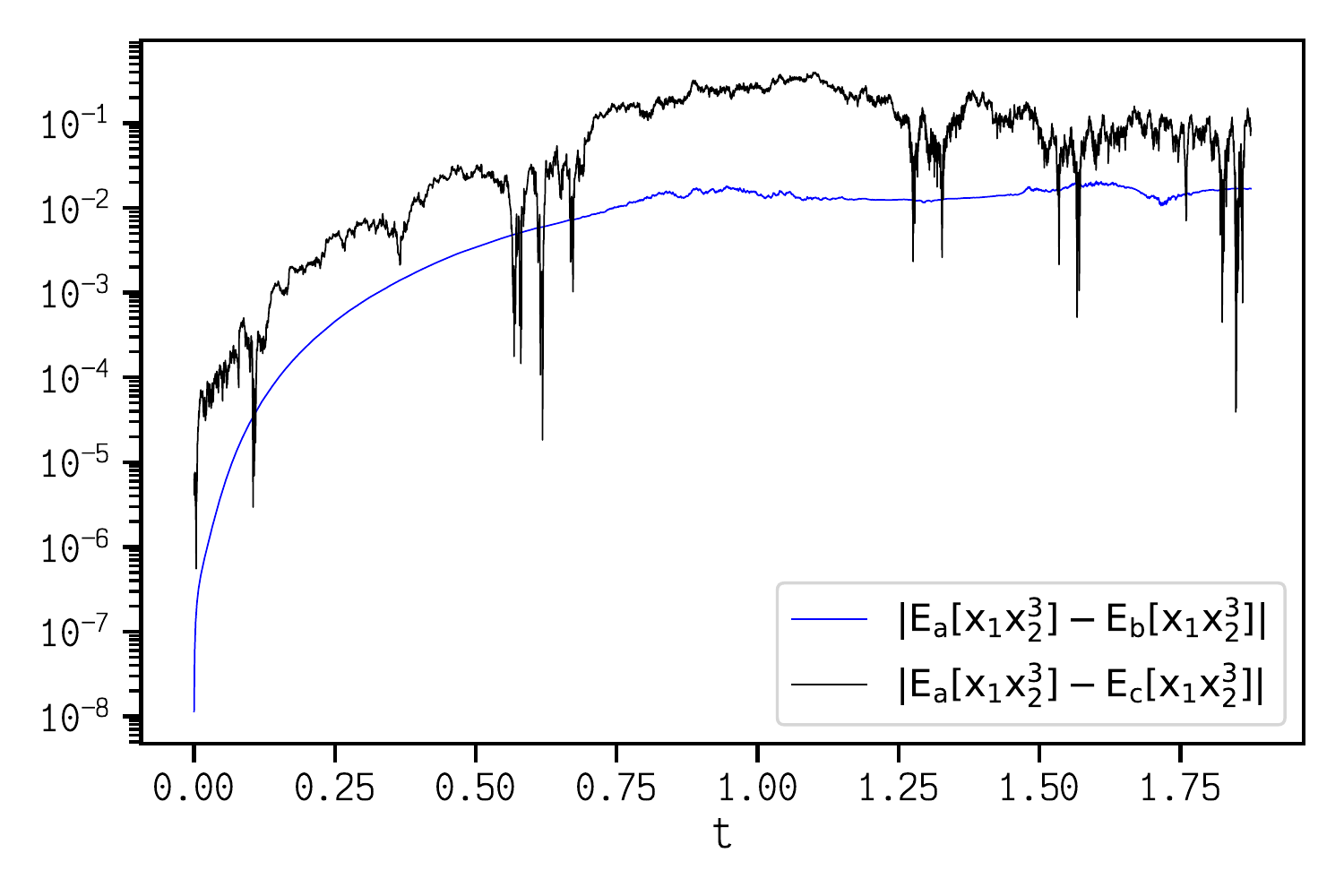}\\
\includegraphics[trim={\lefttrimMoment cm \verticaltrimMoment cm \righttrimMoment cm \verticaltrimMoment cm},clip,width=\figurewidth]{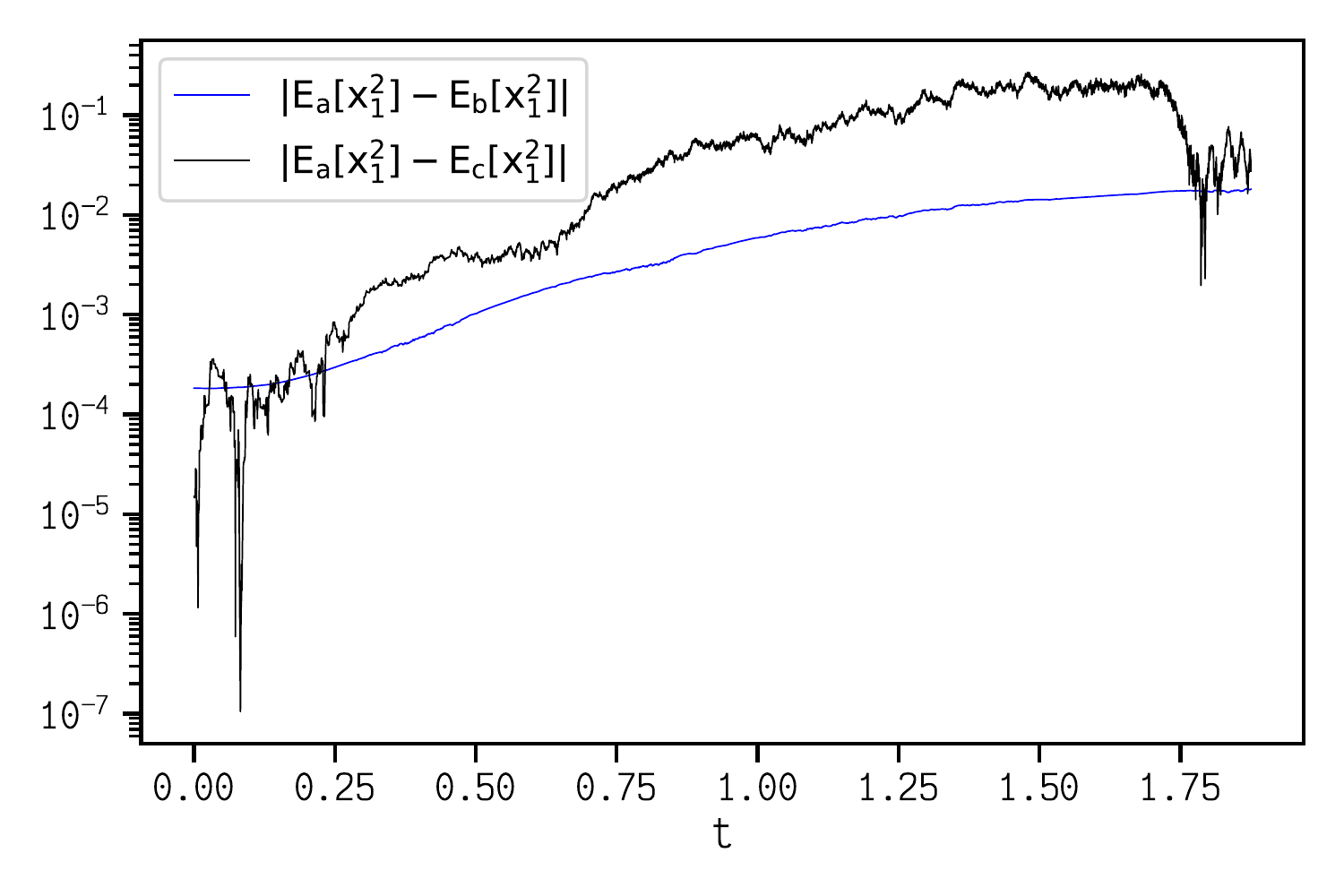}&
\includegraphics[trim={\lefttrimMoment cm \verticaltrimMoment cm \righttrimMoment cm \verticaltrimMoment cm},clip,width=\figurewidth]{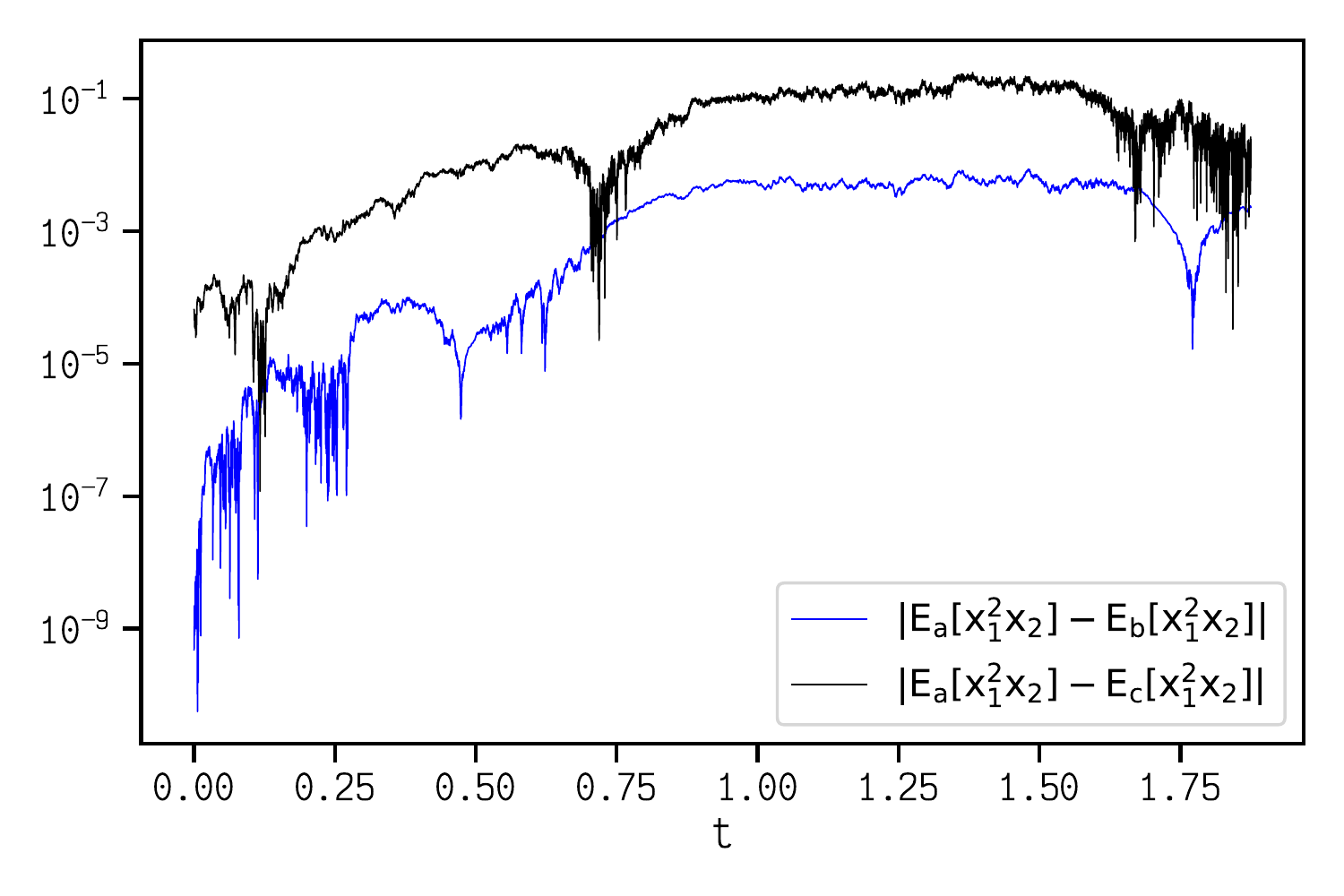}\\
\includegraphics[trim={\lefttrimMoment cm \verticaltrimMoment cm \righttrimMoment cm \verticaltrimMoment cm},clip,width=\figurewidth]{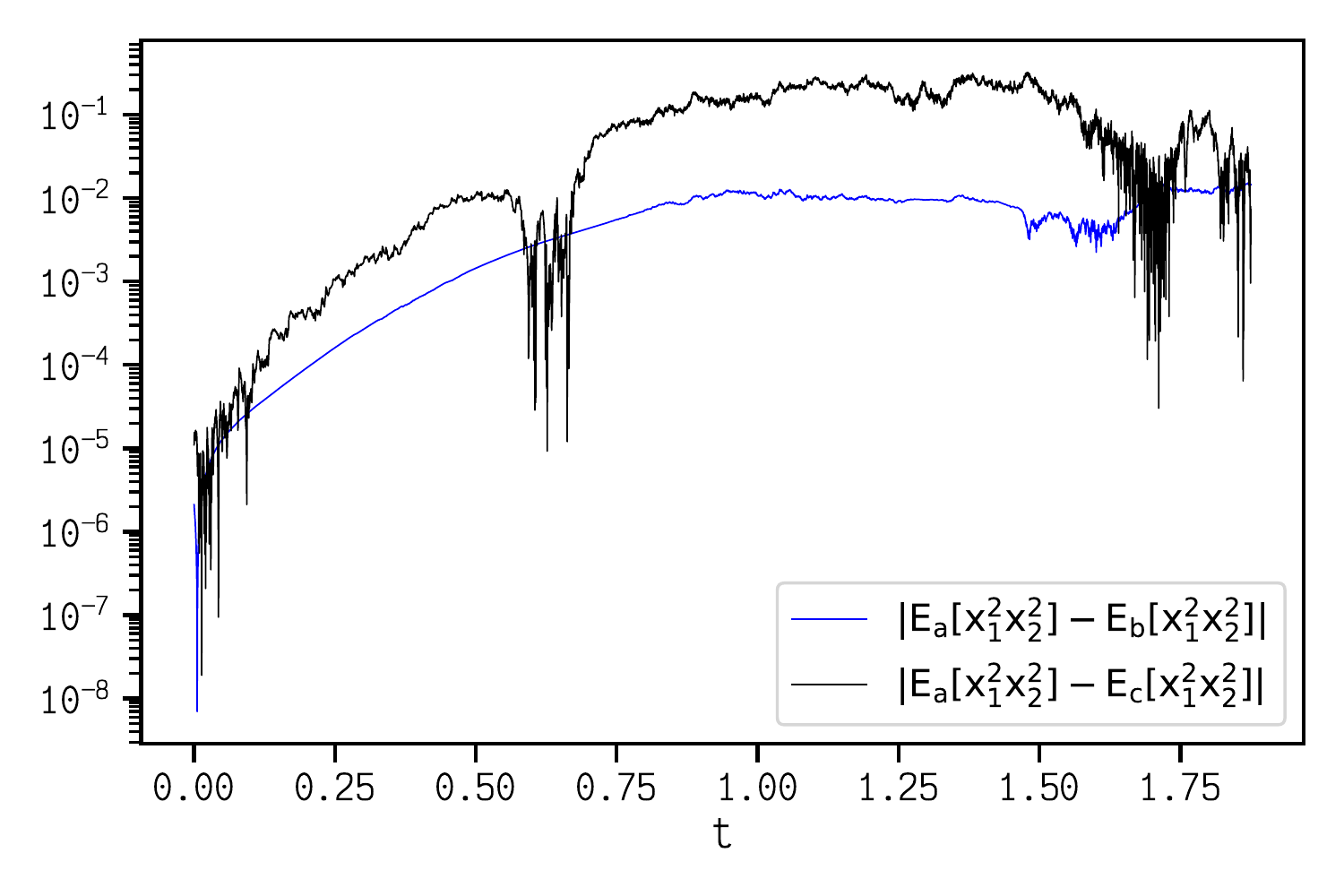}&
\includegraphics[trim={\lefttrimMoment cm \verticaltrimMoment cm \righttrimMoment cm \verticaltrimMoment cm},clip,width=\figurewidth]{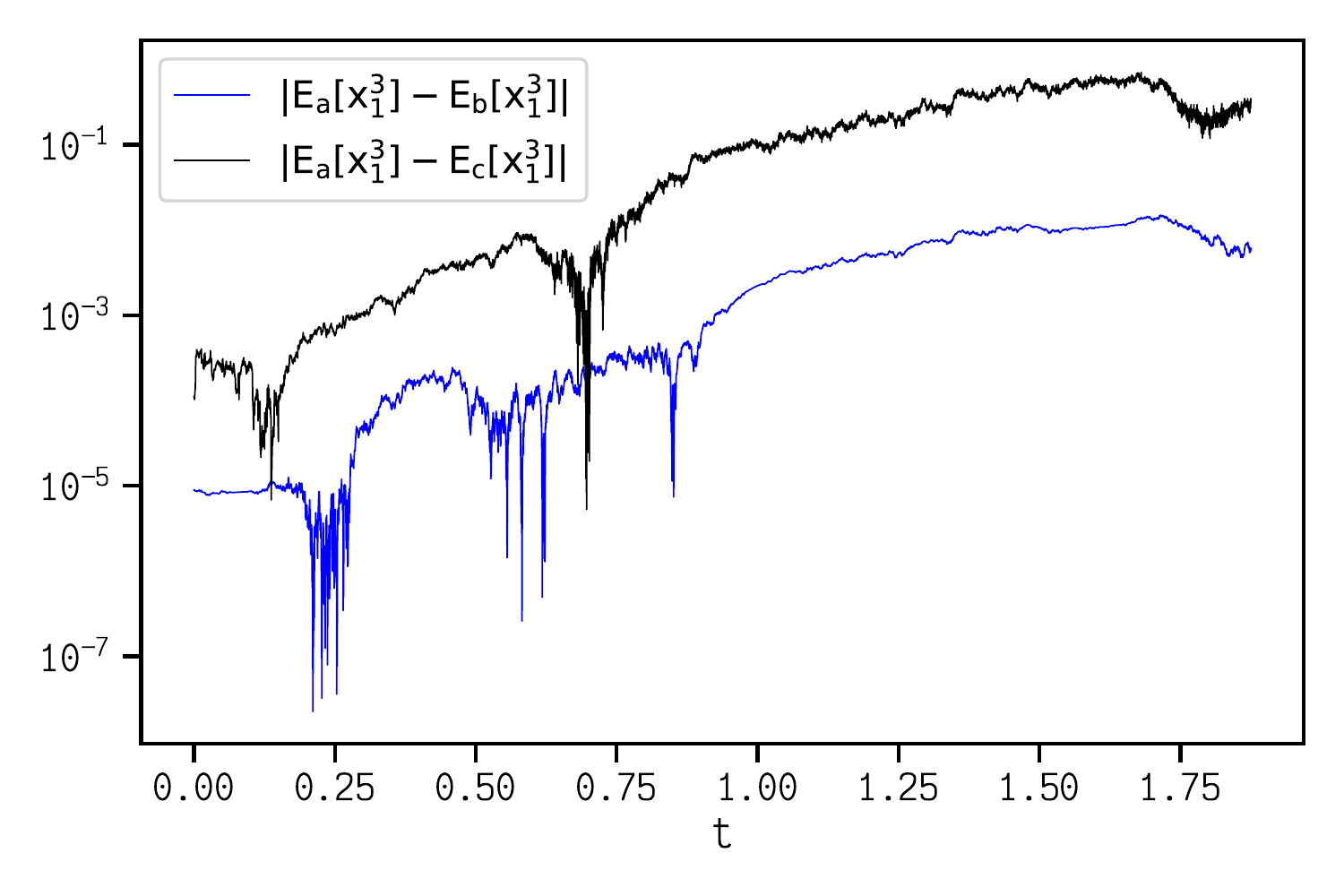}\\
\includegraphics[trim={\lefttrimMoment cm \verticaltrimMoment cm \righttrimMoment cm \verticaltrimMoment cm},clip,width=\figurewidth]{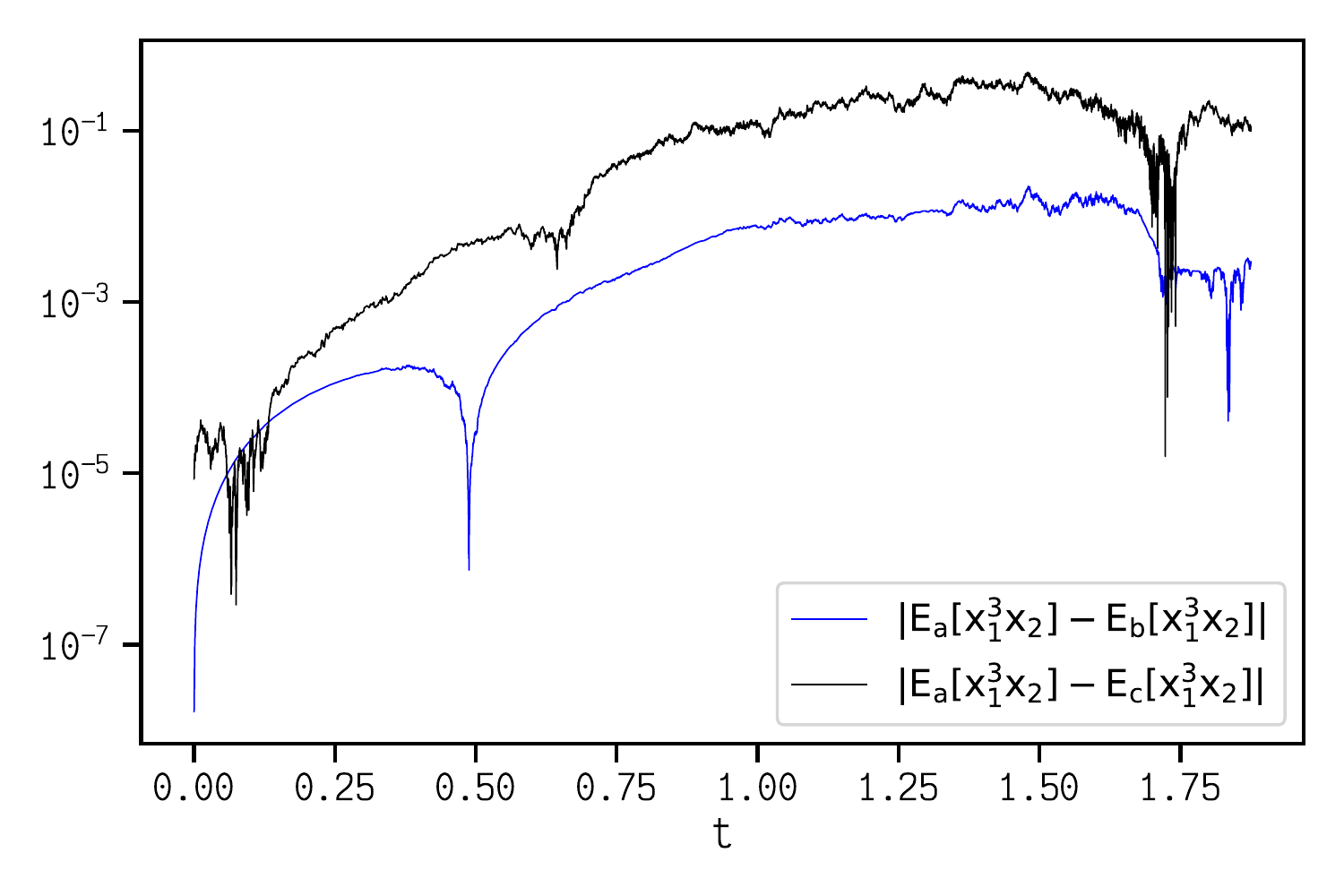}&
\includegraphics[trim={\lefttrimMoment cm \verticaltrimMoment cm \righttrimMoment cm \verticaltrimMoment cm},clip,width=\figurewidth]{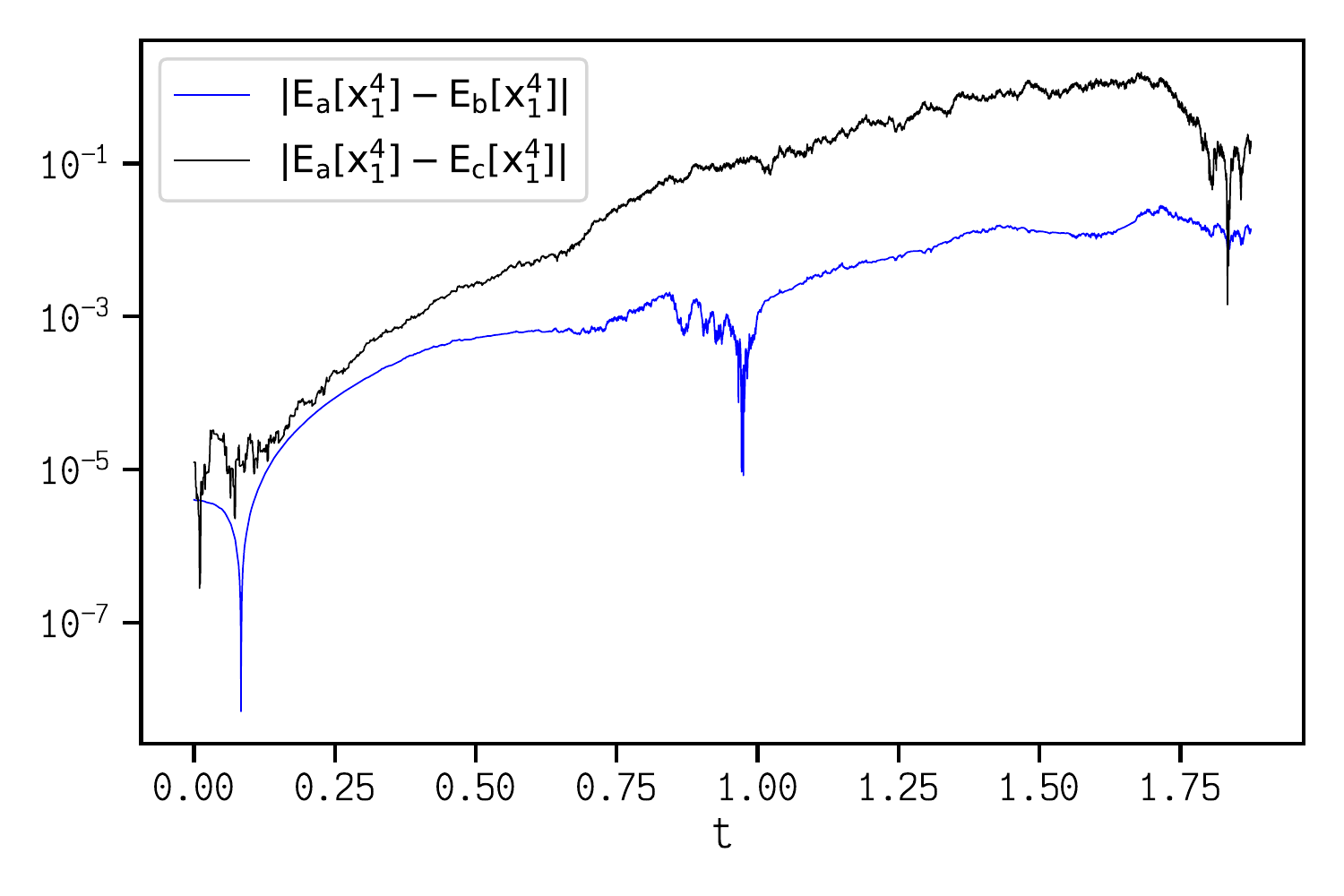}
\end{tabular}
}
\caption{Continuation of Figure \ref{fig:Error_comparison_first_part} for different sets of natural statistics. \label{fig:Error_comparison_second_part} }
\end{figure}

\section{Conclusions and discussion}\label{sec:conclusion}

We have proposed a novel implementation of the projection filter for a class of dynamical systems. In this new approach, we have successfully removed some difficulties in projection filter implementation by calculating all the moments and the Fisher information matrix using automatic differentiation knowing that they can be obtained as partial derivatives of the log partition function. We calculate the log-partition function via Gauss--Chebyshev quadrature in unidimensional problems and via sparse-grid integration in multidimensional problems. We also replace a recursion procedure typically used to obtain the higher moments -- which is valid only for unidimensional problems -- with a simple partial derivative evaluation of the log-partition function that works for multidimensional problems. We have shown mathematically that the resulting approximation error of the log-partition function and its first and second derivatives have linear relations to the integration error of partition functions and converge to zero as the number of integration grid points approaches infinity. This result enables us to focus only on choosing integration grid such that the integration error of the partition function is acceptable.

We have shown the effectiveness of the proposed method by comparing it to a finite-difference approximation of the Kushner--Stratonovich equation and a bootstrap particle filter. In particular, for a unidimensional problem we have shown that our projection filter implementation can be constructed using as low as $\Chebyshevbasislow$ integration nodes, while still being very close to the finite-difference solution to the Kushner--Stratonovich equation. 

We also compared sparse-grid and quasi Monte Carlo (qMC) implementations of the projection filter to the exact Kalman--Bucy filter solution in a two dimensional linear dynamic model. We found that the sparse-grid based projection filter approximates the Kalman--Bucy filter solution more accurately than qMC. With the sparse-grid level set to 5, the sparse-grid based projection filter has a practically negligible Hellinger distance to the Kalman-Bucy solution after $t=0.4$, while the Hellinger distances of the densities obtained with qMC integration continue to oscillate around $10^{-4}$.

We have also applied the projection filter to non-linear/non-Gaussian partially observed two-dimensional Van der Pol oscillator model. 
For the Van der Pol oscillator filtering problem, we have shown that the projection filter consructed with a level 8 sparse grid gives posterior density approximations that are closer to the Crank--Nicolson solution to the Kushner--Stratonovich equation than a particle filter with $4\times 10^4$ particles. In addition to being lightweight, our projection filter implementation does not suffer from numerical stability issues as the finite-difference or Crank--Nicolson approximations to the Kushner--Stratonovich equation.

Thanks to the sparse-grid integration, our projection filter algorithm is suitable for high-dimensional problems. In addition, if an increased accuracy is needed, it would be possible to employ adaptive sparse-grid techniques for more precise integration of the log partition function. In the future, we will investigate the possibility of using parametric bijections from the canonical cube to $\mathbb{R}^d$ to further reduce the number of quadrature nodes while maintaining the same accuracy level.

\section{Acknowledgements}
Muhammad Emzir would like to express his gratitude to the KFUPM Dean of Research Oversight and Coordination for the SR211015 grant. 

\bibliographystyle{abbrvnat}
\bibliography{reference.bib}

\begin{thebibliography}{70}
\providecommand{\natexlab}[1]{#1}
\providecommand{\url}[1]{\texttt{#1}}
\expandafter\ifx\csname urlstyle\endcsname\relax
  \providecommand{\doi}[1]{doi: #1}\else
  \providecommand{\doi}{doi: \begingroup \urlstyle{rm}\Url}\fi

\bibitem[Amari(1985)]{Amari1985}
S.~Amari.
\newblock \emph{Differential-Geometrical Methods in Statistics}.
\newblock Springer New York, 1985.

\bibitem[Amari and Nagaoka(2000)]{Amari2000}
S.~Amari and H.~Nagaoka.
\newblock \emph{Methods of Information Geometry}.
\newblock Number 191 in Translations of Mathematical Monographs. {A}merican
  {M}athematical {S}ociety, 2000.

\bibitem[Armstrong and Brigo(2016)]{Armstrong2015}
J.~Armstrong and D.~Brigo.
\newblock Nonlinear filtering via stochastic {PDE} projection on mixture
  manifolds in {$\mathcal{L}^2$} direct metric.
\newblock \emph{Mathematics of Control, Signals, and Systems}, 28\penalty0
  (1):\penalty0 5, 2016.

\bibitem[Armstrong and Brigo(2018)]{Armstrong2018}
J.~Armstrong and D.~Brigo.
\newblock Intrinsic stochastic differential equations as jets.
\newblock \emph{Proceedings of the Royal Society A: Mathematical, Physical and
  Engineering Sciences}, 474\penalty0 (2210):\penalty0 20170559, 2018.

\bibitem[Armstrong and Brigo(2019)]{Armstrong2016}
J.~Armstrong and D.~Brigo.
\newblock Optimal approximation of sdes on submanifolds: the {I}t\^o-vector and
  {I}t\^o-jet projections.
\newblock \emph{Proceedings of the London Mathematical Society}, 119\penalty0
  (1):\penalty0 176--213, 2019.

\bibitem[Armstrong et~al.(2018)Armstrong, Brigo, and Ferrucci]{Armstrong2018a}
J.~Armstrong, D.~Brigo, and E.~R. Ferrucci.
\newblock Optimal approximation of {SDEs} on submanifolds: the {It\^{o}}-vector
  and {It\^{o}}-jet projections.
\newblock \emph{Proceedings of the London Mathematical Society}, 119\penalty0
  (1):\penalty0 176--213, 2018.

\bibitem[Azimi-Sadjadi and Krishnaprasad(2005)]{AzimiSadjadi2005}
B.~Azimi-Sadjadi and P.~Krishnaprasad.
\newblock Approximate nonlinear filtering and its application in navigation.
\newblock \emph{Automatica}, 41\penalty0 (6):\penalty0 945--956, 2005.

\bibitem[Baek and Bang(2013)]{Baek2013}
K.~Baek and H.~Bang.
\newblock Adaptive sparse grid quadrature filter for spacecraft relative
  navigation.
\newblock \emph{Acta Astronautica}, 87:\penalty0 96--106, 2013.

\bibitem[Bain and Crisan(2009)]{Bain2009}
A.~Bain and D.~Crisan.
\newblock \emph{Fundamentals of Stochastic Filtering}.
\newblock Springer New York, 2009.

\bibitem[Bao et~al.(2014)Bao, Cao, Webster, and Zhang]{Bao2014}
F.~Bao, Y.~Cao, C.~Webster, and G.~Zhang.
\newblock A hybrid sparse-grid approach for nonlinear filtering problems based
  on adaptive-domain of the {Z}akai equation approximations.
\newblock \emph{{SIAM}/{ASA} Journal on Uncertainty Quantification}, 2\penalty0
  (1):\penalty0 784--804, 2014.

\bibitem[Barthelmann et~al.(2000)Barthelmann, Novak, and
  Ritter]{Barthelmann2000}
V.~Barthelmann, E.~Novak, and K.~Ritter.
\newblock High dimensional polynomial interpolation on sparse grids.
\newblock \emph{Advances in Computational Mathematics}, 12\penalty0
  (4):\penalty0 273--288, 2000.

\bibitem[Beskos et~al.(2017)Beskos, Crisan, Jasra, Kamatani, and
  Zhou]{Beskos2017}
A.~Beskos, D.~Crisan, A.~Jasra, K.~Kamatani, and Y.~Zhou.
\newblock A stable particle filter for a class of high-dimensional state-space
  models.
\newblock \emph{Advances in Applied Probability}, 49\penalty0 (1):\penalty0
  24--48, 2017.

\bibitem[Bradbury et~al.(2018)Bradbury, Frostig, Hawkins, Johnson, Leary,
  Maclaurin, Necula, Paszke, Vander{P}las, Wanderman-{M}ilne, and
  Zhang]{jax2018github}
J.~Bradbury, R.~Frostig, P.~Hawkins, M.~J. Johnson, C.~Leary, D.~Maclaurin,
  G.~Necula, A.~Paszke, J.~Vander{P}las, S.~Wanderman-{M}ilne, and Q.~Zhang.
\newblock {JAX}: composable transformations of {P}ython+{N}um{P}y programs,
  2018.

\bibitem[Brigo(2022)]{Brigo2022}
D.~Brigo.
\newblock Optimal projection filters, 2022.
\newblock arXiv:2205.01594.

\bibitem[Brigo et~al.(1995)Brigo, Hanzon, and Gland]{Brigo1995}
D.~Brigo, B.~Hanzon, and F.~L. Gland.
\newblock A differential geometric approach to nonlinear filtering: the
  projection filter.
\newblock Research Report 2598, {INRIA} {R}ennes - {B}retagne {A}tlantique,
  1995.

\bibitem[Brigo et~al.(1998)Brigo, Hanzon, and Gland]{Brigo1998}
D.~Brigo, B.~Hanzon, and F.~L. Gland.
\newblock A differential geometric approach to nonlinear filtering: the
  projection filter.
\newblock \emph{{IEEE} Transactions on Automatic Control}, 43\penalty0
  (2):\penalty0 247--252, 1998.

\bibitem[Brigo et~al.(1999)Brigo, Hanzon, and Gland]{Brigo1999}
D.~Brigo, B.~Hanzon, and F.~L. Gland.
\newblock Approximate nonlinear filtering by projection on exponential
  manifolds of densities.
\newblock \emph{Bernoulli}, 5\penalty0 (3):\penalty0 495, 1999.

\bibitem[Bungartz and Griebel(2004)]{Bungartz2004}
H.-J. Bungartz and M.~Griebel.
\newblock Sparse grids.
\newblock \emph{Acta Numerica}, 13:\penalty0 147--269, may 2004.

\bibitem[Calin and Udri{\c{s}}te(2014)]{Calin2014}
O.~Calin and C.~Udri{\c{s}}te.
\newblock \emph{Geometric Modeling in Probability and Statistics}.
\newblock Springer International Publishing, 2014.

\bibitem[Candy(2016)]{candy2016}
J.~V. Candy.
\newblock \emph{Bayesian {{Signal Processing}}: Classical, Modern, and Particle
  Filtering Methods}.
\newblock {IEEE Press / Wiley \& Sons, Inc}, 2nd edition, 2016.

\bibitem[Ceci and Colaneri(2013)]{Ceci2013}
C.~Ceci and K.~Colaneri.
\newblock The {Z}akai equation of nonlinear filtering for jump-diffusion
  observations: Existence and uniqueness.
\newblock \emph{Applied Mathematics \& Optimization}, 69\penalty0 (1):\penalty0
  47--82, 2013.

\bibitem[Chopin(2020)]{Chopin2020}
N.~Chopin.
\newblock \emph{An Introduction to Sequential {M}onte {C}arlo}.
\newblock Springer, 2020.

\bibitem[Chui(1972)]{Chui1972}
C.~K. Chui.
\newblock Concerning {G}aussian--{C}hebyshev quadrature errors.
\newblock \emph{{SIAM} Journal on Numerical Analysis}, 9\penalty0 (2):\penalty0
  237--240, 1972.

\bibitem[Crisan and Rozovskii(2011)]{Crisan2011}
D.~Crisan and B.~Rozovskii.
\newblock \emph{The {O}xford Handbook of Nonlinear Filtering}.
\newblock Oxford University Press, 2011.

\bibitem[Delvos(1982)]{Delvos1982}
F.-J. Delvos.
\newblock {d-Variate} boolean interpolation.
\newblock \emph{Journal of Approximation Theory}, 34\penalty0 (2):\penalty0
  99--114, 1982.

\bibitem[Doucet(2001)]{Doucet2001}
A.~Doucet.
\newblock \emph{Sequential Monte Carlo Methods in Practice}.
\newblock Springer New York, 2001.

\bibitem[Gao et~al.(2019)Gao, Zhang, and Petersen]{Gao2019}
Q.~Gao, G.~Zhang, and I.~R. Petersen.
\newblock An exponential quantum projection filter for open quantum systems.
\newblock \emph{Automatica}, 99:\penalty0 59--68, 2019.

\bibitem[Gao et~al.(2020{\natexlab{a}})Gao, Dong, Petersen, and Ding]{Gao2020a}
Q.~Gao, D.~Dong, I.~R. Petersen, and S.~X. Ding.
\newblock Design of a {{Quantum Projection Filter}}.
\newblock \emph{IEEE Transactions on Automatic Control}, 65\penalty0
  (8):\penalty0 3693--3700, 2020{\natexlab{a}}.

\bibitem[Gao et~al.(2020{\natexlab{b}})Gao, Zhang, and Petersen]{Gao2020}
Q.~Gao, G.~Zhang, and I.~R. Petersen.
\newblock An improved quantum projection filter.
\newblock \emph{Automatica}, 112:\penalty0 108716, 2020{\natexlab{b}}.

\bibitem[Gerstner and Griebel(1998)]{Gerstner1998}
T.~Gerstner and M.~Griebel.
\newblock Numerical integration using sparse grids.
\newblock \emph{Numerical Algorithms}, 18\penalty0 (3/4):\penalty0 209--232,
  1998.

\bibitem[Griebel and Oettershagen(2014)]{Griebel2014}
M.~Griebel and J.~Oettershagen.
\newblock Dimension-adaptive sparse grid quadrature for integrals with boundary
  singularities.
\newblock In J.~Garcke and D.~Pfl{\"u}ger, editors, \emph{Sparse Grids and
  Applications - Munich 2012}, pages 109--136. Springer International
  Publishing, Cham, 2014.

\bibitem[Griewank and Walther(2008)]{Griewank2008}
A.~Griewank and A.~Walther.
\newblock \emph{Evaluating Derivatives : Principles and Techniques of
  Algorithmic Differentiation}.
\newblock SIAM, 2nd edition, 2008.

\bibitem[Hanzon and Hut(1991)]{Hanzon1991}
B.~Hanzon and R.~Hut.
\newblock New results on the projection filter.
\newblock In \emph{Proceedings of the first {E}uropean {C}ontrol {C}onference},
  volume~1, pages 623--628, 1991.

\bibitem[Ito and Xiong(2000)]{Ito2000}
K.~Ito and K.~Xiong.
\newblock Gaussian filters for nonlinear filtering problems.
\newblock \emph{{IEEE} Transactions on Automatic Control}, 45\penalty0
  (5):\penalty0 910--927, may 2000.

\bibitem[Jazwinski(1970)]{Jazwinski1970}
A.~Jazwinski.
\newblock \emph{Stochastic Processes and Filtering Theory}.
\newblock Academic Press, New York, 1970.

\bibitem[Jia et~al.(2012)Jia, Xin, and Cheng]{Jia2012}
B.~Jia, M.~Xin, and Y.~Cheng.
\newblock Sparse-grid quadrature nonlinear filtering.
\newblock \emph{Automatica}, 48\penalty0 (2):\penalty0 327--341, 2012.

\bibitem[Judd and Skrainka(2011)]{Judd2011}
K.~L. Judd and B.~Skrainka.
\newblock High performance quadrature rules: how numerical integration affects
  a popular model of product differentiation.
\newblock Technical report, Centre for Microdata Methods and Practice, 2011.

\bibitem[Julier and Uhlmann(1997)]{Julier1997}
S.~J. Julier and J.~K. Uhlmann.
\newblock New extension of the {K}alman filter to nonlinear systems.
\newblock In I.~Kadar, editor, \emph{Signal Processing, Sensor Fusion, and
  Target Recognition {VI}}, 1997.

\bibitem[Koyama(2018)]{Koyama2018}
S.~Koyama.
\newblock Projection smoothing for continuous and continuous-discrete
  stochastic dynamic systems.
\newblock \emph{Signal Processing}, 144:\penalty0 333--340, 2018.

\bibitem[Kronrod(1966)]{Kronrod1966}
A.~S. Kronrod.
\newblock Nodes and weights of quadrature formulas.
\newblock \emph{Mathematics of Computation}, 20\penalty0 (93):\penalty0 184,
  1966.

\bibitem[Kushner(1967{\natexlab{a}})]{Kushner1967}
H.~Kushner.
\newblock Nonlinear filtering: The exact dynamical equations satisfied by the
  conditional mode.
\newblock \emph{{IEEE} Transactions on Automatic Control}, 12\penalty0
  (3):\penalty0 262--267, 1967{\natexlab{a}}.

\bibitem[Kushner(1967{\natexlab{b}})]{Kushner1967b}
H.~Kushner.
\newblock Approximations to optimal nonlinear filters.
\newblock \emph{{IEEE} Transactions on Automatic Control}, 12\penalty0
  (5):\penalty0 546--556, 1967{\natexlab{b}}.

\bibitem[Kushner(1967{\natexlab{c}})]{Kushner1967a}
H.~J. Kushner.
\newblock Dynamical equations for optimal nonlinear filtering.
\newblock \emph{Journal of Differential Equations}, 1967{\natexlab{c}}.

\bibitem[Laurie(1997)]{Laurie1997}
D.~P. Laurie.
\newblock Calculation of {Gauss-Kronrod} quadrature rules.
\newblock \emph{Mathematics of Computation}, 66\penalty0 (219):\penalty0
  1133--1146, 1997.

\bibitem[Leobacher and Pillichshammer(2014)]{Leobacher_2014}
G.~Leobacher and F.~Pillichshammer.
\newblock \emph{Introduction to {Q}uasi-{M}onte Carlo Integration and
  Applications}.
\newblock Birkh\"auser, 2014.

\bibitem[Liptser and Shiryaev(2001)]{Liptser2001}
R.~Liptser and A.~Shiryaev.
\newblock \emph{Statistics of Random Processes}.
\newblock Springer Berlin Heidelberg, 2001.

\bibitem[Liptser and Shiryaev(2010)]{RobertS.Liptser2010}
R.~Liptser and A.~Shiryaev.
\newblock \emph{Statistics of Random Processes II}.
\newblock Springer Berlin Heidelberg, 2010.

\bibitem[Ma and Zabaras(2009)]{Ma2009}
X.~Ma and N.~Zabaras.
\newblock An adaptive hierarchical sparse grid collocation algorithm for the
  solution of stochastic differential equations.
\newblock \emph{Journal of Computational Physics}, 228\penalty0 (8):\penalty0
  3084--3113, 2009.

\bibitem[Mason and Handscomb(2003)]{Mason2003}
J.~Mason and D.~Handscomb.
\newblock \emph{Chebyshev Polynomials}.
\newblock Chapman \& Hall/CRC, 2003.

\bibitem[Maurel and Michel(1984)]{Maurel1984}
M.~C. Maurel and D.~Michel.
\newblock Des resultats de non existence de filtre de dimension finie.
\newblock \emph{Stochastics}, 13\penalty0 (1-2):\penalty0 83--102, 1984.

\bibitem[Meurer et~al.(2017)Meurer, Smith, Paprocki, {\v C}ert{\'i}k,
  Kirpichev, Rocklin, Kumar, Ivanov, Moore, Singh, Rathnayake, Vig, Granger,
  Muller, Bonazzi, Gupta, Vats, Johansson, Pedregosa, Curry, Terrel, Rou{\v
  c}ka, Saboo, Fernando, Kulal, Cimrman, and Scopatz]{Meurer2017}
A.~Meurer, C.~P. Smith, M.~Paprocki, O.~{\v C}ert{\'i}k, S.~B. Kirpichev,
  M.~Rocklin, A.~Kumar, S.~Ivanov, J.~K. Moore, S.~Singh, T.~Rathnayake,
  S.~Vig, B.~E. Granger, R.~P. Muller, F.~Bonazzi, H.~Gupta, S.~Vats,
  F.~Johansson, F.~Pedregosa, M.~J. Curry, A.~R. Terrel, {\v S}.~Rou{\v c}ka,
  A.~Saboo, I.~Fernando, S.~Kulal, R.~Cimrman, and A.~Scopatz.
\newblock {{SymPy}}: Symbolic computing in {{Python}}.
\newblock \emph{PeerJ Computer Science}, 3:\penalty0 e103, 2017.
\newblock ISSN 2376-5992.

\bibitem[Novak and Ritter(1996)]{Novak1996}
E.~Novak and K.~Ritter.
\newblock High dimensional integration of smooth functions over cubes.
\newblock \emph{Numerische Mathematik}, 75\penalty0 (1):\penalty0 79--97, 1996.

\bibitem[Patterson(1968)]{Patterson1968}
T.~N.~L. Patterson.
\newblock The optimum addition of points to quadrature formulae.
\newblock \emph{Mathematics of Computation}, 22\penalty0 (104):\penalty0
  847--847, 1968.

\bibitem[Poterjoy(2015)]{Poterjoy2015}
J.~Poterjoy.
\newblock A localized particle filter for high-dimensional nonlinear systems.
\newblock \emph{Monthly Weather Review}, 144\penalty0 (1):\penalty0 59--76,
  2015.

\bibitem[Radhakrishnan et~al.(2016)Radhakrishnan, Singh, Bhaumik, and
  Tomar]{Radhakrishnan2016}
R.~Radhakrishnan, A.~K. Singh, S.~Bhaumik, and N.~K. Tomar.
\newblock Multiple sparse-grid {G}auss-{H}ermite filtering.
\newblock \emph{Applied Mathematical Modelling}, 40\penalty0 (7-8):\penalty0
  4441--4450, 2016.

\bibitem[Rall(1981)]{Rall1981}
L.~B. Rall.
\newblock \emph{Automatic Differentiation: Techniques and Applications}.
\newblock Number 120 in Lecture Notes in Computer Science. Springer, 1981.

\bibitem[Riess(1971)]{Riess1971}
R.~D. Riess.
\newblock A note on error bounds for {G}auss--{C}hebyshev quadrature.
\newblock \emph{{SIAM} Journal on Numerical Analysis}, 8\penalty0 (3):\penalty0
  509--511, sep 1971.

\bibitem[S\"arkk\"a(2013)]{Sarkka2013}
S.~S\"arkk\"a.
\newblock \emph{Bayesian Filtering and Smoothing}.
\newblock Cambridge University Press, 2013.

\bibitem[S\"arkk\"a and Sarmavuori(2013)]{Saerkkae2013}
S.~S\"arkk\"a and J.~Sarmavuori.
\newblock Gaussian filtering and smoothing for continuous-discrete dynamic
  systems.
\newblock \emph{Signal Processing}, 93\penalty0 (2):\penalty0 500--510, feb
  2013.

\bibitem[Singh et~al.(2018)Singh, Radhakrishnan, Bhaumik, and Date]{Singh2018}
A.~K. Singh, R.~Radhakrishnan, S.~Bhaumik, and P.~Date.
\newblock Adaptive sparse-grid {Gauss-Hermite} filter.
\newblock \emph{Journal of Computational and Applied Mathematics},
  342:\penalty0 305--316, 2018.

\bibitem[Smith(1985)]{Smith1985}
G.~D. Smith.
\newblock \emph{Numerical Solution of Partial Differential Equations : Finite
  Difference Methods}.
\newblock Clarendon Press Oxford University Press, Oxford Oxfordshire New York,
  1985.

\bibitem[Smolyak(1963)]{Smolyak1963}
S.~A. Smolyak.
\newblock Quadrature and interpolation formulas for tensor products of certain
  classes of functions.
\newblock \emph{Dokl. Akad. Nauk SSSR}, 148:\penalty0 1042--1045, 1963.

\bibitem[Snyder et~al.(2008)Snyder, Bengtsson, Bickel, and
  Anderson]{Snyder2008}
C.~Snyder, T.~Bengtsson, P.~Bickel, and J.~Anderson.
\newblock Obstacles to high-dimensional particle filtering.
\newblock \emph{Monthly Weather Review}, 136\penalty0 (12):\penalty0
  4629--4640, 2008.

\bibitem[Stoyanov(2015)]{stoyanov2015tasmanian}
M.~Stoyanov.
\newblock User manual: {TASMANIAN} sparse grids.
\newblock Technical Report ORNL/TM-2015/596, Oak Ridge National Laboratory, One
  Bethel Valley Road, Oak Ridge, TN, 2015.

\bibitem[Tronarp and S\"arkk\"a(2019)]{Tronarp2019}
F.~Tronarp and S.~S\"arkk\"a.
\newblock Updates in {B}ayesian filtering by continuous projections on a
  manifold of densities.
\newblock In \emph{{ICASSP} 2019 - 2019 {IEEE} International Conference on
  Acoustics, Speech and Signal Processing ({ICASSP})}. {IEEE}, 2019.

\bibitem[van Handel and Mabuchi(2005)]{Handel2005}
R.~van Handel and H.~Mabuchi.
\newblock Quantum projection filter for a highly nonlinear model in cavity
  {{QED}}.
\newblock \emph{J. Opt. B: Quantum Semiclass. Opt.}, 7\penalty0 (10):\penalty0
  S226--S236, 2005.
\newblock ISSN 1464-4266.

\bibitem[Wasilkowski and Wo\'zniakowski(1995)]{Wasilkowski1995}
G.~Wasilkowski and H.~Wo\'zniakowski.
\newblock Explicit cost bounds of algorithms for multivariate tensor product
  problems.
\newblock \emph{Journal of Complexity}, 11\penalty0 (1):\penalty0 1--56, 1995.

\bibitem[Winshcel and Kr\"atzig(2010)]{Winshcel2010}
V.~Winshcel and M.~Kr\"atzig.
\newblock Solving, estimating, and selecting nonlinear dynamic models without
  the curse of dimensionality.
\newblock \emph{Econometrica}, 78\penalty0 (2):\penalty0 803--821, 2010.

\bibitem[Wonham(1963)]{Wonham1963}
W.~M. Wonham.
\newblock Stochastic problems in optimal control.
\newblock \emph{Proceedings of the {IEEE}}, 51\penalty0 (3):\penalty0 530--530,
  1963.

\bibitem[Zakai(1969)]{Zakai1969}
M.~Zakai.
\newblock On the optimal filtering of diffusion processes.
\newblock \emph{Zeitschrift f\"ur Wahrscheinlichkeitstheorie und Verwandte
  Gebiete}, 11\penalty0 (3):\penalty0 230--243, 1969.

\end{thebibliography}
\end{document}